\def\Cal{\mathcal}
\def\C{{\Cal C}}
\def\M{{\Cal M}}
\def\S{{\Cal S}}
\def\Z{\mathcal{Z}}
\def\vnk{V_{n,k}}
\def\bbr{{\Bbb R}}
\def\bbc{{\Bbb C}}
\def\bbs{{\Bbb S}}
\def\rank{{\hbox{\rm rank}}}
\def\tr{{\hbox{\rm tr}}}
\def\grad{{\hbox{\rm grad}}}
\def\span{{\hbox{\rm span}}}
\def\det{{\hbox{\rm det}}}
\def\min{{\hbox{\rm min}}}
\def\Pr{{\hbox{\rm Pr}}}
\def\gnk{G_{n,k}}
\def\vnk{V_{n,k}}
\def\rn{\bbr^n}
\def\sn{S^{n-1}}
\def\part{\partial}
\def\intl{\int\limits}
\def\b{\beta}
\def\Gam{\Gamma}
\def\Om{\Omega}
\def\a{\alpha}
\def\om{\omega}
\def\Del{\Delta}
\def\del{\delta}
\def\vp{\varphi}
\def\gam{\gamma}
\def\sig{\sigma}
\def\lam{\lambda}
\def\z{\zeta}
\def\th{\theta}
\def\t{\tau}
\def\chi{{\bf 1}}
\def\snm1{\bbs^{n-1}}
\font\frak=eufm10
\def\fr#1{\hbox{\frak #1}}
\def\frM{\fr{M}}
\def\Re{\mathrm{Re}\,}
\def\intl{\int\limits}
\def\tr{\mathrm{tr}}
\def\grad{\mathrm{grad}}
\def\gnk{\mathrm{G}_{n,k}}
\def\gnm{\mathrm{G}_{n,m}}
\def\vnk{\mathrm{V}_{n,k}}
\def\vnm{\mathrm{V}_{n,m}}
\def\sym{\mathrm{Sym}}
\newcommand{\Cos}{\mathrm{Cos}}
\def\Cs{\mathscr{C}}
\def\Ma{\frM_{n,m}}
\def\gm{\Gamma_m}
\def\Cs{\mathscr{C c 1234}}
\def\d{\partial}
\def\cdt{\stackrel{*}{T}\!{}_{m, k}^\lam}
\def\cd{\stackrel{*}{\C}\!{}_{m, k}^\lam}
\def\sd{\stackrel{*}{\S}\!{}_{m, k}^\lam}
\def\cd0{\stackrel{*}{\C}\!{}_{m, k}^\lam}
\def\sd0{\stackrel{*}{\S}\!{}_{m, k}^\lam}
\def\fd{\stackrel{*}{F}\!{}_{\!m, k}}
\def\ncd0{\stackrel{*}{\Cs}\!{}_{m, k}^\lam}
\newtheorem{theorem}{Theorem}[section]
\newtheorem{lemma}[theorem]{Lemma}
\theoremstyle{definition}
\theoremstyle{remark}
\newtheorem{remark}[theorem]{Remark}
\theoremstyle{corollary}
\newtheorem{corollary}[theorem]{Corollary}
\newtheorem{proposition}[theorem]{Proposition}
\newtheorem{conjecture}[theorem]{Conjecture}
\numberwithin{equation}{section}
\newcommand{\be}{\begin{equation}}
\newcommand{\ee}{\end{equation}}
\newcommand{\bea}{\begin{eqnarray}}
\newcommand{\eea}{\end{eqnarray}}
\newcommand{\Bea}{\begin{eqnarray*}}
\newcommand{\Eea}{\end{eqnarray*}}
\def\sideremark#1{\ifvmode\leavevmode\fi\vadjust{\vbox to0pt{\vss
 \hbox to 0pt{\hskip\hsize\hskip1em
\vbox{\hsize2cm\tiny\raggedright\pretolerance10000
 \noindent #1\hfill}\hss}\vbox to8pt{\vfil}\vss}}}%
\begin{document}


\title[The $\lam$-Cosine Transforms ]{The $\lam$-Cosine Transforms, Differential Operators, and Funk Transforms on Stiefel and Grassmann Manifolds}

\author{ B. Rubin}

\address{Department of Mathematics, Louisiana State University, Baton Rouge,
Louisiana 70803, USA}
\email{borisr@lsu.edu}

\subjclass[2010]{Primary 44A12; Secondary 47G10, 43A85}



\maketitle

\begin{abstract} We introduce a new family of invariant differential operators associated with $\lam$-cosine and Funk-Radon transforms on Stiefel and Grassmann manifolds. These  operators reduce the order of the
$\lam$-cosine  transforms  and yield new inversion formulas. Intermediate Funk-cosine transforms corresponding to integration over  matrices of lower rank are studied. The main tools are  polar decomposition and  Fourier analysis on matrix space.

 \end{abstract}

\setcounter{tocdepth}{1}

\tableofcontents


\section{Introduction}


\setcounter{equation}{0}

\noindent In the present paper we introduce new invariant differential operators  that can be used in the study of $\lam$-cosine,  $\lam$-sine,  and Funk  transforms on   Stiefel and Grassmann manifolds. These operators  are generalizations of polynomials of the  Beltrami-Laplace operator in the inversion formulas for the classical Funk transform on the unit sphere.
 We recall that the $\lam$-cosine transform of a function $f$ on the unit sphere $\sn$ in $\rn$ is an integral operator of the form
\be\label{af000}    (\C^\lam
f)(u)=   \intl_{\sn}  f(v)
|u \cdot v|^{\lam} \,d_*v, \qquad u\in \sn, \ee
where integration is performed with respect to the rotation invariant probability Haar measure.
 The name {\it cosine transform} for $\lam =1$  is due to Lutwak \cite [p. 385]{Lu} and reflects the  fact that $|u \cdot v|$ is  the cosine of the smallest angle between the straight lines along $u$ and $v$.
   The associated operator
\be\label{af000r}    (F f)(u)=   \intl_{\{v\in \sn:\, u \cdot v =0\}}  f(v)
 \,d_u v \ee
is called the Funk transform or a spherical Radon transform of $f$.
 An extensive bibliography related to operators  (\ref {af000}),  (\ref {af000r}), their   generalizations, and applications can be found in  \cite {Ga06, OPR, Ru15}.

In recent decades there is an increasing interest to  analogues of  (\ref{af000}) and  (\ref{af000r}), when the lines along $u$ and $v$ are replaced by  higher dimensional linear subspaces. These generalizations lead to  integral operators  that take functions on the Grassmannian $G_{n,m}$ of $m$-dimensional linear subspaces of $\rn$ to functions on the similar Grassmannian  $G_{n,k}$. In place of the Grassmannians, one can take their orthonormal bases (or frames), which are elements of the Stiefel manifolds $V_{n,m}$ and $V_{n,k}$, respectively.

 Historically the first publications related to higher rank generalizations of   (\ref{af000}) and  (\ref{af000r}), were probably short articles by  Petrov \cite{P1}   and Matheron \cite{Mat1}, though close  mathematical objects on matrix spaces were studied before by  G{\aa}rding \cite{Ga}, Gindikin \cite{Gi},
  and some other authors, in particular,  in multivariate statistics.  The paper \cite{P1} deals with
   inversion of  Radon transforms on
 matrices and  Grassmannians, while \cite{Mat1} contains a famous  injectivity conjecture for the higher rank cosine transform with  $\lam =1$. This conjecture was disproved by  Goodey and Howard \cite{GH1, GH}.
  More information  and further references can be found in \cite {A, AGS, GGR,   GR, OP, OPR,  OR05, OR06, Ru13, SZ, Zh1, Zh09}.

 A great deal has been written about Radon transforms on affine Grassmann manifolds. This circle of problems lies beyond the scope of the present paper. Information  can be found in
    \cite{Go, GK2, H65, Ru04d, RW2, Str}.

 \vskip 0.2 truecm

 Let us describe the  contents of the paper and main results.

 \vskip 0.2 truecm

{\bf 1.} The following question  was asked by Alesker \cite{A0}:

{\it Given a complex number $\lam$, what  differential operator $D$ satisfies}
\be \label {bmuqa}
D \,\C^{\lam +2} =\C^{\lam}  \; ?\ee
 The answer  is known for   the unit sphere \cite [p. 285]{Ru15}, where $D$ is a polynomial of the Beltrami-Laplace operator and the reasoning relies on the  spherical harmonic technique.
  In the Grassmannian set-up, when  the lines along $u$ and $v$ in (\ref{af000}) are replaced by linear subspaces, say $\xi \in G_{n,m}$  and $\t \in  G_{n,k}$,
  the question was studied by   Alesker,  Gourevitch, and Sahi \cite{AGS}  for $k=m$.  The authors used the tools of the representation theory, in terms of  which the operator $D$ looks pretty complicated.

 In the present paper  we consider arbitrary  $k$ and $m$ and use
 an equivalent language  of Stiefel manifolds.  This setting of the problem yields a dual pair of $\lam$-cosine transforms, $\C^{\lam}_{m, k}$ and $\cd0 $, which coincide when $k=m$; see (\ref{0mby}), (\ref{0mbyd}). For the dual transform $\cd0 $ we obtain a generalization of (\ref{bmuqa}) having the form
  \be \label {bmsuqa}
D_\ell \, \stackrel{*}{\Cs}\!{}_{m, k}^{\lam +2\ell}  = \, \ncd0; \qquad \ell =0,1,2, \ldots. \ee
The operator   $D_\ell$ has a simple form; see Theorem \ref  {liut0s}. 
   In the case of the unit sphere, $D_\ell$ boils down to the known polynomial of the Beltrami-Laplace operator.
We also obtain an analogue of   (\ref{bmsuqa}) for the $\lam$-sine transforms; see (\ref {sin2h}).

 Unlike \cite{AGS}, our method relies on the extension of  orthonormal Stiefel matrices by homogeneity onto the ambient space of real rectangular matrices with subsequent implementation of
 the Fourier transform technique.  This method was developed  in \cite{Ru13} and used in \cite {Ru20} to prove (\ref{bmuqa}) on the unit sphere without  spherical harmonics.  
 An analogue of (\ref{bmsuqa}) for $\C^{\lam}_{m, k}$ (without ``$\,{}^*\,$")   is still an  open problem if $k\neq m$.

 \vskip 0.2 truecm

{\bf 2.}
We also study analytic continuations of   properly normalized $\lam$-cosine transforms, which include Stiefel  analogues of the Funk transform (\ref{af000r}), as well as their intermediate modifications. We call  them   {\it the intermediate Funk-cosine transforms}  and denote  $F^{(j)}_{m,k}$; $j\!=\!0,1, \ldots, m\!-\!1$.
  In the case $j=0$, the operator  $F_{m,k}\equiv F^{(0)}_{m,k}$ is a straightforward  generalization of  (\ref{af000r}) and coincides with the latter if $m=k=1$.   
All these transforms can be written in Grassmannian terms and  expressed as convolutions with  positive  Radon measures.   Convolutions (or distributions) of  similar nature  are well known in Analysis
  and deal with integration over matrices of lower rank; see, e.g., \cite [Chapter VII, Section 2] {FK}, \cite[Section 4]{Ru06}.

Intermediate Funk-cosine transforms in the case $k=m$ were considered by
 Cross \cite{Cr}, who defined them  using the group representation tools developed by {\'O}lafsson and  Pasquale \cite{OP}. Our approach, invoking   Stiefel manifolds and zeta integrals,  is different in principle. It is straightforward, has simple geometric and group-theoretic interpretation,  and covers all admissible $k$ and $m$; see formulas (\ref{Interm}),   (\ref{rkfkfn}), (\ref{00bn9adu}), (\ref{rkfkfndu}).

 \vskip 0.2 truecm

{\bf 3.} We apply  (\ref {bmsuqa})  to  inversion of the  Funk  transforms $F_{m,k}$  and the  intermediate Funk-cosine transforms  $F^{(j)}_{m,k}$; see Section \ref{ljkb2q}.  For the sake of simplicity, the results are formulated in terms of the right $O(m)$-invariant functions on
the Stiefel manifold $\vnm$, but the reader   can  easily reformulate them  in the Grassmannian language.  We obtain new local inversion formulas  and some nonlocal formulas, the structure of which depends on the parity of dimensions and agrees with known results for the unit sphere  \cite{H11, Ru15, Ru20}.   Some cases, related to nonlocal inversion formulas, remain open and  need new ideas; see Section \ref{kjb8j} for the list of open problems that might be of interest.

It should be noted that nonlocal inversion formulas for the Funk transform on Grassmannians are known in  terms, which differ from those in the present paper.  Such formulas, invoking the Crofton symbol and the kappa operator, can be found in \cite{GGR}
 and  are pretty involved. An alternative approach in terms of G{\aa}rding-Gindikin fractional integrals
 was suggested in \cite{GR} for real Grassmannians and extended in \cite{Zh1}
to complex and quaternionic cases.  Unlike these works, our goal in the present paper is to find simple differential operators that   agree with elegant  formulas by Helgason \cite [p. 133]{H11} and  our formulas in \cite{Ru20} for the unit sphere.

Differential operators  with determinantal power weights  were studied by Sahi and Zhang \cite{SZ} in the general context of real, complex, and quaternionic matrix spaces.  These operators  have common features with
 $D_\ell$ in (\ref{bmsuqa}),
descend to Grassmannians, and can be  used to obtain local inversion formulas  for the Funk transform.  The method of \cite{SZ} heavily relies on the group representation technique and essentially  differs from ours.
In contrast with  \cite{SZ}, the core of our approach is the classical Fourier analysis. It allows us to obtain not only local but also  some nonlocal inversion formulas and
 covers intermediate Funk-cosine transforms, that were not considered in \cite{SZ}.

 \vskip 0.2 truecm

{\bf 4.}  A distinctive feature of our  paper in comparison with other related publications  (see, e.g., \cite {A, AGS, Gr1, K,  OPR, SZ}) is that we think of  smooth functions on  Stiefel (or Grassmann) manifolds not in terms of  coordinate charts, but using homogeneous continuation of the  relevant orthonormal frames onto the  space of rectangular matrices, where    classical Calculus can be applied. This transition  is performed with the aid of the polar decomposition of  matrices. As a result,  it becomes possible to write the desired differential operators in a simple analytic form.

\vskip 0.2 truecm

{\bf 5.}  The above approach to the definition of smooth functions entails,  however, some extra work. Specifically, we need to show equivalence of our definition  and the classical one, as, e.g., in the  Lie theory, and  carefully  justify   the $C^\infty \to  C^\infty$ action of  all operators under consideration\footnote{Such a justification is sometimes skipped in \cite{Ru13}, as ``obvious".}. Information about  differentiable structures on Stiefel or Grassmann manifolds and related diffeomorphisms
 is highly scattered and presented in different sources from different points of view. For  convenience of the reader, we have  written an Appendix, in which this auxiliary material is organized in a unified consistent form.
 Most of the  facts, except probably Lemma \ref{wnvp},  are well known;  some of them look folklorish.

In Sections 2-6  we introduce basic objects of our investigation and study their properties (see  Contents). The main results are  presented in Sections 7,8.

\section{Preliminaries}\label {ifoutrr}

 \subsection{Notation}\label {ioutrr}

 Let  $\frM_{n,m}$ be  the
space of real matrices $x=(x_{i,j})$ having $n$ rows and $m$
 columns. We associate $\frM_{n,m}$ with the real  space $\bbr^{nm}$ of $nm$-tuples
 \[(x_{1,1}, \ldots, x_{n,1}, x_{1,2}, \ldots, x_{n,2}, \ldots, x_{1,m}, \ldots, x_{n,m});
 \]
  $dx=\prod^{n}_{i=1}\prod^{m}_{j=1} dx_{i,j}$;
   $x'$ is  the transpose of  $x$; $|x|_m=\det
(x'x)^{1/2}$; $I_m$
   is the identity $m \times m$
  matrix; $0$ stands for the zero entries.
  In the case $n\ge m$, we denote by $\tilde\frM_{n,m}$ the set  of all matrices $x\in \frM_{n,m}$ of rank $m$. This set  is an open subset of $ \frM_{n,m}$ in the standard topology of $\bbr^{nm}$;   $GL(n,\bbr)=\tilde\frM_{n,n}$ is the general linear group of $\bbr^n$.  We write
  \[e_1=(1,0, \ldots, 0), \;e_2=(0,1, \ldots, 0),\ldots, e_n= (0, \ldots, 0,1)\]
for the coordinate unit vectors in $\rn$.

The notation $L^1(M)$, $C(M)$, $C^\infty (M)$ for the function spaces of Lebesgue integrable, continuous, and infinitely differentiable functions on $M$ is standard. It is assumed that $M$ is equipped with  suitable   structure. If a group $H$ acts on $M$ from the right, then $L^1(M)^H$, $C(M)^H$, and $C^\infty (M)^H$ denote the corresponding spaces of right $H$-invariant functions.

We will be dealing with the compact Stiefel manifold $\vnm=\{v\in \frM_{n,m} : v'v=I_m\}$ of  orthonormal $m$-frames in $\bbr^n$ and the  Grassmann manifold  $G_{n,m}$ of $m$-dimensional linear subspaces of $\bbr^n$ equipped with the relevant Haar probability measures. Basic  facts about these manifolds are collected in Appendix. If $v\in \vnm$, then $\{v\}=\span (v)\in G_{n,m}$ is a linear subspace spanned by $v$;    $v^\perp\in G_{n, n-m}$ is a subspace perpendicular  to $v$. If $m=n$, then $V_{n,n}=O(n)$ is the group of orthogonal transformations of $\rn$. If $m=1$, then  $V_{n,1}=\sn$ is the unit sphere  in $\rn$.

 The Fourier transform  of a
function $\vp\in L^1(\frM_{n,m})$ is defined by
\[
\hat\vp (y)=\intl_{\frM_{n,m}} e^{{\rm tr(iy'x)}} \vp
(x)\, dx,\qquad y\in\frM_{n,m} \; .\] The corresponding Parseval
equality
 has the form
 \be\label{parse} (\hat \vp, \hat \om)=(2\pi)^{nm} \, (\vp, \om),
\qquad (\vp, \om)=\intl_{\frM_{n,m}} \vp(x) \overline{\om(x)} \,
dx.\ee
If $\om$ belongs to the Schwartz space $S(\frM_{n,m})$ of rapidly decreasing smooth functions and $\vp\in S'(\frM_{n,m})$ is a tempered distribution, the
 equality (\ref{parse}) serves as a definition of the  Fourier transform  of $\vp$.

 The  Cayley-Laplace operator $\Del$  on  $\frM_{n,m}$  is  defined by
 \be\label{K-L} \Del=\det(\d'\d), \ee
where $\partial$ is the $n\times m$  matrix whose entries
are partial derivatives $\d/\d x_{i,j}$.  In the Fourier transform
terms, the action of $\Del$ represents a multiplication by  $(-1)^m |y|_m^2$.
It follows that $\Del$ is left $O(n)$-invariant   and right  $O(m)$-invariant, that is,
\be\label {lwliut}
\Del: \, f(\rho x) \rightarrow (\Del f)(\rho x), \qquad f(x\gam) \rightarrow (\Del f)(x\gam),\ee
for all $\rho \in O(n)$, $\gam \in O(m)$, $x \in \Ma$.
These relations can be easily checked using the Fourier transform.
More information about the  Cayley-Laplace operator can be found in \cite{Kh2, Ru06}.

 In the following, $\sym_m \simeq \bbr^{m(m+1)/2}$  is the space of $m \times m$ real
symmetric matrices $s=(s_{i,j})$; $ds=\prod_{i \le j} ds_{i,j}$;   $\Omega_m$  denotes the cone of positive definite matrices in $\sym_m$. The  Siegel gamma  function of $\Omega_m$  is defined by
\be\label{2.4}
 \gm (\a)\!=\!\intl_{\Omega_m} \det (s)^{\a-(m+1)/2 }  e^{-\tr (s)} \, ds
 =\pi^{m(m-1)/4}\prod\limits_{j=0}^{m-1} \Gam (\a\!-\! j/2);  \ee
see  \cite{FK, Ga, Gi, Si}. This integral is absolutely convergent if $Re\,\a > (m-1)/2$ and extends a meromorphic function of $\a$ with the
 polar set
 \[\{(m-1-j)/2\, : \,  j=0,1,2,\ldots\}.\]

The abbreviation $a.c.$ mean analytic continuation. Normalized probability measures will be usually denoted by $d_*$ followed by the variable of integration.  The letter $c$ (sometimes with subscripts) is used for a constant that can be different at each occurrence.

 \subsection{Zeta integrals} \label {se4}    Suppose that $n\geq m\ge 2$ and denote
 \be\label{zeta}
\Z_{n,m}(f,\lam)=\intl_{\Ma} f(x) |x|^{\lam}_m \,dx, \qquad f\in   S(\Ma),\quad \lam \in\bbc.\ee
This expression is called the  {\it zeta   integral} \cite{Ig, Shin} and  represents a meromorphic $S'$-distribution.

\begin{lemma}\label{lacz} {\rm ( \cite{Sh1, Kh1}, \cite [Lemma 4.2]{Ru06})}
 The integral (\ref{zeta})
is absolutely convergent if $Re\, \lam > m-n-1$ and  extends to $Re\, \lam \leq m-n-1$
as a meromorphic function of $ \; \lam$ with the only poles $ \;
m-n-1, m-n-2,\dots\;$. These poles and their orders are  the
same as of  $\gm((\lam +n)/2)$. The normalized
integral
\be\label{gzeta}
 \z_{n,m}(f,\lam)=\frac{\Z_{n,m}(f,\lam)}{\gm((\lam +n)/2)}\ee
 is an entire
function of $\lam$.
\end{lemma}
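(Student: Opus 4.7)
My strategy is to reduce the matrix zeta integral (\ref{zeta}) to a scalar Gindikin-type zeta integral on the cone $\Omega_m$ by polar decomposition, and then to invoke the classical meromorphic continuation theorem for such integrals on symmetric cones.

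First, I would apply the polar decomposition of rank-$m$ rectangular matrices: every $x \in \tilde\frM_{n,m}$ factors uniquely as $x = v r^{1/2}$ with $v \in V_{n,m}$ and $r = x'x \in \Omega_m$, and the Lebesgue measure splits as
\be\label{pf-pd}
dx = c_{n,m}\,\det(r)^{(n-m-1)/2}\,dr\,dv
\ee
for some positive constant $c_{n,m}$; see the Appendix to this paper, or \cite[Ch.~VII]{FK}. Since $\frM_{n,m}\setminus \tilde\frM_{n,m}$ has Lebesgue measure zero and $|x|_m=\det(r)^{1/2}$, substitution of (\ref{pf-pd}) into (\ref{zeta}) gives
\be\label{pf-red}
\Z_{n,m}(f,\lam) = c_{n,m}\intl_{\Omega_m} F(r)\,\det(r)^{(\lam+n-m-1)/2}\,dr, \qquad F(r) := \intl_{V_{n,m}} f(v r^{1/2})\,dv.
\ee
Since $f \in S(\Ma)$ and $V_{n,m}$ is compact, $F$ is smooth on the interior $\Omega_m$ and decays faster than any negative power of $\|r\|$ as $r\to\infty$ in the cone.

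Next, I would set $\a = (\lam+n)/2$, so that the exponent $(\lam+n-m-1)/2$ becomes $\a-(m+1)/2$, matching the integrand of $\gm(\a)$ in (\ref{2.4}). At infinity, the rapid decay of $F$ absorbs any polynomial weight for every $\lam \in \bbc$; at the boundary $\partial\Omega_m$, the same exponent count as in the definition of $\gm(\a)$ yields absolute convergence iff $\Re\a > (m-1)/2$, i.e., $\Re\lam > m-n-1$. For $\Re\lam \le m-n-1$, I would invoke the classical Gindikin--G{\aa}rding analytic continuation theorem for integrals of the form $\intl_{\Omega_m} F(r)\det(r)^{\a-(m+1)/2}dr$ (see \cite{Ga, Gi, FK}): the continuation is meromorphic in $\a$, with polar divisor identical to that of $\gm(\a)$. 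Pulling back via $\a = (\lam+n)/2$ yields poles exactly at $\lam = m-n-1-j$, $j=0,1,2,\ldots$, with orders read off from the Gindikin product formula in (\ref{2.4}). Division by $\gm((\lam+n)/2)$ cancels each of these poles, leaving $\z_{n,m}(f,\lam)$ entire.

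The main technical obstacle is verifying that $F$ carries enough regularity for the Gindikin--G{\aa}rding theorem: although $r \mapsto r^{1/2}$ is only continuous (not smooth) on $\overline{\Omega_m}$, smoothness of $F$ in the interior $\Omega_m$ together with controlled boundary behavior follows from the Schwartz property of $f$ and compactness of $V_{n,m}$. Once this regularity is secured, the rest of the argument reduces to straightforward bookkeeping under the substitution $\a = (\lam+n)/2$.
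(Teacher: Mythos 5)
Your reduction via the polar decomposition to a G{\aa}rding--Gindikin integral over $\Om_m$ is the natural route, and it is essentially the classical argument behind the sources that the paper itself cites for this lemma (\cite{Sh1, Kh1}, \cite[Lemma 4.2]{Ru06}); note that the paper gives no proof of Lemma \ref{lacz} at all, so the comparison is with that standard argument. The genuine gap is exactly at the point you flag and then wave away. The continuation theorem for $\intl_{\Om_m}F(r)\det(r)^{\a-(m+1)/2}\,dr$, equivalently the analyticity in $\a$ of the Riesz distribution $\det(r)_+^{\a-(m+1)/2}/\gm(\a)$, requires $F$ to be smooth \emph{across} the boundary of the cone (the restriction to $\overline{\Om}_m$ of a Schwartz, or at least sufficiently many times differentiable, function on $\sym_m$). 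Smoothness of $F$ in the open cone plus rapid decay, which is all you actually justify, is not enough: the poles are governed by the boundary Taylor expansion of $F$, and already for $m=1$ a function behaving like $\sqrt r$ near $r=0$ (continuous, smooth inside, rapidly decreasing) produces poles at half-integers, which would ruin both the asserted polar set and the entirety of $\z_{n,m}(f,\lam)$.

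So the missing ingredient is a proof that $F(r)=\intl_{\vnm} f(vr^{1/2})\,d_*v$ extends to a smooth (in fact Schwartz) function on $\sym_m$. This is true, but it is where the real work lies: observe that $F(x'x)=\intl_{O(n)}f(\gam x)\,d_*\gam$ is a smooth $O(n)$-invariant function of $x\in\Ma$, and since for $n\ge m$ the polynomial invariants of $O(n)$ acting on $\Ma$ are generated by the entries of $x'x$, G.~Schwarz's theorem on smooth invariant functions (for $m=1$, Whitney's theorem on even functions) yields a smooth function $\tilde F$ on $\sym_m$ with $F(x'x)=\tilde F(x'x)$; one must then also check the Schwartz-type decay of $\tilde F$. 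With this in hand, your bookkeeping under $\a=(\lam+n)/2$ does give convergence for $\Re\lam>m-n-1$, the polar set $\{m-n-1,m-n-2,\dots\}$, and the entirety of the normalized integral (\ref{gzeta}). Alternatively, you could sidestep the cone reduction at the delicate step by integrating by parts with the Bernstein identity (\ref{bka}), which yields the continuation formula (\ref{oao}) directly on $\Ma$; but note that this by itself only locates the poles among the zeros of $B_{\ell,m,n}(\lam)$, a strictly larger set, so identifying the polar set with that of $\gm((\lam+n)/2)$ and proving $\z_{n,m}(f,\cdot)$ entire still requires an additional argument.
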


 If $\Del$ is the  Cayley-Laplace operator (\ref{K-L}), the following identity of the Bernstein
type holds:
 \[\Del ^\ell
|x|_m^{\lam+2\ell}=B_{\ell,m,n}(\lam)|x|_m^{\lam},\qquad \ell=1,2, \ldots \,,\]
 \be\label{bka}
B_{\ell,m,n}(\lam)=\prod\limits_{i=0}^{m-1}\prod\limits_{j=0}^{\ell-1}(\lam +n-i+2j)(\lam+2+2j+i);\ee
see \cite[p. 565]{Ru06}. It allows us to represent  meromorphic continuation  of $\Z(f,\lam)$ in the  form
\be\label {oao}
\Z_{n,m}(f,\lam)\!=\!\frac{1}{B_{\ell,m,n}(\lam)}\, \Z_{n,m}(\Del ^\ell f,\lam\!+\!2\ell), \quad  Re \,\lam > m\!-\!n\!-\!1\!-\!2\ell.\ee
The values
\[ \lam =-n,\, 1-n, \ldots, m-n-1, \]
for which the corresponding zeta distribution is a positive measure, deserve special mentioning; cf.  \cite[Theorem VII.3.1]{FK}.

\begin{lemma}\label{tzk}  \cite[Theorem 4.4, Lemma 4.7]{Ru06}  If $f  \in  S(\Ma)$, then
 \[ \z_{n,m}(f, \lam) \big |_{\lam =j-n}=\intl_{\Ma} f(x) \, d\nu_j (x),\qquad j=0,1, \ldots,  m-1,\]
 where $\nu_j$  is   a  Radon measure supported on the set $\{x \in \Ma :\, \rank (x) \le j\}$. Specifically, if $j=1,2, \ldots,  m-1$, then
 \[\z_{n,m}(f, j-n)=\frac{\pi^{(n-j)m/2}}{\gm(n/2)} \intl_{O(n)} d_*\gam
\intl_{\frM_{j,m}}f \left (\gam \left[\begin{array} {c} \om \\ 0
\end{array} \right]  \right ) \, d\om. \]
If $j=0$, then
 \[\z_{n,m}(f, -n)=\frac{\pi^{nm/2}}{\gm(n/2)}f(0). \]
\end{lemma}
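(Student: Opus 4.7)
The plan is to evaluate $\z_{n,m}(f,\lam)$ at $\lam = j-n$ by combining polar decomposition on $\frM_{n,m}$ with the classical Riesz-distribution theory on the symmetric cone $\Omega_m$. I first apply the polar decomposition $x = v\,s^{1/2}$ with $v \in \vnm$ and $s = x'x \in \Omega_m$, whose Jacobian is $dx = 2^{-m}\det(s)^{(n-m-1)/2}\,dv\,ds$, where $dv$ is the unnormalized Haar measure on $\vnm$ of total mass $\sigma_{n,m} = 2^m\pi^{nm/2}/\gm(n/2)$. Since $|x|_m = \det(s)^{1/2}$, in the absolute-convergence range $\Re\,\lam > m-n-1$,
\be
\Z_{n,m}(f,\lam) = 2^{-m} \intl_{\vnm} dv \intl_{\Omega_m} f(v\,s^{1/2})\, \det(s)^{(\lam+n-m-1)/2}\, ds.
\ee
With $\alpha = (\lam+n)/2$, the inner integral is a Riesz-type distribution on $\Omega_m$ whose normalization by $\gm(\alpha)$ is entire in $\alpha$; the specialization $\lam = j-n$ is precisely the Wallach point $\alpha = j/2$.

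For $j = 0$ (so $\alpha = 0$), the normalized Riesz distribution on $\Omega_m$ is the Dirac mass at $s = 0$, the inner integral reduces to $f(v\cdot 0) = f(0)$, and integrating this constant over $\vnm$ at once yields $\z_{n,m}(f,-n) = 2^{-m}\sigma_{n,m} f(0) = \pi^{nm/2} f(0)/\gm(n/2)$, matching the claimed formula.

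For $1 \le j \le m-1$ the Wallach-point formula (cf.\ \cite[Ch.~VII, Thm.~3.1]{FK}) identifies the normalized Riesz distribution at $\alpha = j/2$ with a constant multiple of the push-forward of Lebesgue measure on $\frM_{j,m}$ under $\omega \mapsto \omega'\omega$. Substituting produces an integral of $f(v(\omega'\omega)^{1/2})$ over $\vnm \times \frM_{j,m}$, and it remains to reshape it into the form stated in the lemma. For each rank-$j$ matrix $\omega$, the map $\gam \in O(n) \mapsto \gam \left[\begin{array}{c}\omega\\ 0\end{array}\right]$ sweeps out the full $O(n)$-orbit of rank-$j$ matrices with $x'x = \omega'\omega$, and the identification $O(n)/O(n-j) \simeq V_{n,j}$ (first $j$ columns of $\gam$), together with $O(n)$-invariance, lets one collapse the $\vnm$-integral into one over $O(n)$.

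The chief obstacle is the bookkeeping of constants: matching $2^{-m}$, $\sigma_{n,m}$, the explicit Wallach constant, and the Jacobian of $s = \omega'\omega$ on the rank-$j$ stratum, so that the product telescopes to the stated factor $\pi^{(n-j)m/2}/\gm(n/2)$. A clean shortcut, which sidesteps the most delicate of these Jacobians, is to observe that both sides of the identity are $O(n)$-invariant continuous linear functionals on $S(\frM_{n,m})$ supported on the same $O(n)$-orbit stratum; by uniqueness they are proportional, and the scalar can be pinned down by testing on a single Gaussian $f(x) = e^{-\tr(x'x)/2}$, for which $\Z_{n,m}(f,\lam)$ evaluates to an explicit product of Siegel gammas and the claimed ratio can be read off directly.
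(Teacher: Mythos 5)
You should know at the outset that the paper does not prove this lemma at all: it is quoted verbatim from \cite[Theorem 4.4, Lemma 4.7]{Ru06}, so there is no in-paper argument to compare with. Your main route (polar decomposition $x=vs^{1/2}$, identification of the inner integral as a Riesz distribution on $\Om_m$ with parameter $\a=(\lam+n)/2$, and the Wallach-point evaluation at $\a=j/2$) is the natural one, the constants do telescope (your Gaussian check is consistent: for $f(x)=e^{-\tr(x'x)}$ both sides equal $\pi^{nm/2}/\gm(n/2)$), and the reshaping of $\int_{\vnm}\int_{\frM_{j,m}}f(v(\om'\om)^{1/2})$ into the $O(n)$-orbit form is legitimate, since for fixed $\om$ both parametrizations push the respective probability Haar measures onto the unique $O(n)$-invariant probability measure on the orbit $\{x:\,x'x=\om'\om\}$. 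But as written there is a genuine gap in the central step: you apply the Faraut--Kor\'anyi Wallach-point formula, pointwise in $v$, to the function $s\mapsto f(vs^{1/2})$. That function is defined only on the closed cone and is not smooth at its boundary (the matrix square root is merely H\"older there), while the Wallach-point measures are supported exactly on that boundary; the FK continuation theorem is stated for test functions on $\sym_m$ (Schwartz or $C^\infty_c$), so it does not license the evaluation you need, and for $j\ge 1$ the continuation must be pushed well past the first pole, where H\"older regularity alone is not enough. The standard repair is to integrate over $v$ (equivalently over $O(n)$) \emph{first}: the $O(n)$-average of a Schwartz function on $\frM_{n,m}$ is of the form $h(x'x)$ with $h$ smooth and rapidly decreasing on $\sym_m$ (a matrix version of Whitney's even-function theorem / G.~Schwarz's theorem on smooth invariants), and the cone theory may then be applied to $h$. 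You never address this, and it is precisely the delicate point of the lemma.

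Your proposed ``clean shortcut'' does not close this gap; it is circular and its uniqueness claim is unjustified. First, that the analytically continued functional at $\lam=j-n$ is supported on $\{\rank(x)\le j\}$ is itself a substantial part of what must be proved, so you cannot assume both sides live on the same stratum. Second, $O(n)$-invariant tempered distributions supported on $\{\rank(x)\le j\}$ form an infinite-dimensional space (superpositions over orbits $\{x:x'x=s_0\}$ with $\rank s_0\le j$, plus transversal-derivative terms on lower strata); even after imposing the correct right $GL(m)$-quasi-invariance $|\det a|^{-j}$, one still has to rule out contributions carried by lower-rank strata (e.g.\ derivative-of-delta type terms such as powers of the Cayley--Laplace operator applied to $\delta_0$ in suitable parities), which requires an argument you do not give. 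So the proportionality-plus-Gaussian normalization is a fine way to fix the constant once the structural facts are established, but it cannot substitute for them. In short: right strategy and correct target constants, but the boundary-smoothness issue and the support/uniqueness assertions are real gaps that the proposal leaves open.
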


\subsection{ Convolutions}

Most of the operators in our paper are expressed as convolutions on the  group $O(n)$. In general, let $G$ be a compact Lie group,   $\M (G)$  be the space of Radon  measures on $G$. The convolution of $f\in L^1 (G)$  with $\mu \in \M (G)$   is defined by
\[(f \ast \mu)(x)= \intl_G f(xy ^{-1})\,  d\mu (y) \]
and belongs to $L^1 (G)$.
\begin{proposition}\label {biid}  If $f\in C^\infty (G)$, $\mu \in \M (G)$, then  $f \ast \mu\ \in C^\infty (G)$.
\end{proposition}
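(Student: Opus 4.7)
The plan is to reduce the claim to a standard application of differentiation under the integral sign, using the compactness of $G$ to secure uniform bounds in $y$. First I would fix an arbitrary $x_0\in G$ and choose a coordinate chart $(U,\phi)$ around $x_0$ with $\overline{U}$ compact. Observe that the map $\Psi:G\times G\to G$, $(x,y)\mapsto xy^{-1}$, is $C^\infty$ (it is the composition of the smooth group operations), and $f\in C^\infty(G)$; hence the composite $F(x,y):=f(xy^{-1})$ is $C^\infty$ on $G\times G$. In particular, for any multi-index $\alpha$ in the coordinates of $U$, the partial derivative $\partial^\alpha_x F(x,y)$ exists, is continuous on $\overline{U}\times G$, and therefore is bounded there by a constant $M_\alpha=M_\alpha(U,f)$ uniformly in $y\in G$.

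Next I would invoke that $G$, being compact, forces every Radon measure $\mu\in \M(G)$ to have finite total variation: $|\mu|(G)<\infty$. Combined with the uniform bound above, the standard differentiation-under-the-integral theorem (cf.\ the dominated convergence argument for parameter integrals) applies to
\[
(f*\mu)(x)=\intl_G F(x,y)\, d\mu(y),\qquad x\in U.
\]
Consequently $\partial^\alpha_x (f*\mu)(x)$ exists on $U$ and equals $\intl_G \partial^\alpha_x F(x,y)\, d\mu(y)$, which by the continuity of $\partial^\alpha_x F$ on $\overline U\times G$ and finiteness of $|\mu|$ is continuous in $x\in U$. Iterating the argument (or applying it once to derivatives of arbitrary order) gives $f*\mu\in C^\infty(U)$, and since $x_0$ was arbitrary, $f*\mu\in C^\infty(G)$.

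A more intrinsic variant I could also present uses left-invariant vector fields: for any $X\in \mathfrak g$, one checks that $X_x[f(xy^{-1})]=(Xf)(xy^{-1})$ (because $\frac{d}{dt}\big|_{t=0} f(x\exp(tX)y^{-1})=(Xf)(xy^{-1})$), so formally
\[
X(f*\mu)=(Xf)*\mu,
\]
and since $Xf\in C^\infty(G)$, an induction on the order delivers smoothness of $f*\mu$ along arbitrary compositions of left-invariant fields, which suffices since such fields span every tangent space of $G$. The justification of passing $X$ under the integral is exactly the differentiation-under-the-integral step above.

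There is essentially no hard step here; the whole matter is routine. The only point that requires a modicum of care is the uniform estimate $|\partial^\alpha_x f(xy^{-1})|\le M_\alpha$ on $\overline U\times G$, which is immediate from the joint smoothness of $F(x,y)$ and the compactness of $\overline U\times G$. If anywhere a reader might stumble, it is in verifying that differentiating in $x$ inside the convolution does indeed correspond to the left-invariant derivative of $f$ evaluated at $xy^{-1}$ (the intrinsic variant above), but this is a one-line Chain Rule computation.
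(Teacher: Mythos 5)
Your argument is correct and takes essentially the same route as the paper: both exploit the smoothness of $(x,y)\mapsto f(xy^{-1})$ on $G\times G$ (via smoothness of the group operations), localize in a coordinate chart, and differentiate under the integral sign. The only difference is that you spell out the uniform-bound/dominated-convergence justification (and sketch an alternative via left-invariant vector fields), details the paper leaves implicit.
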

\begin{proof}
This statement can be found in \cite [p. 83]{GZ} without proof.  The proof was briefly outlined in  \cite [p. 147]{H00} for the special case of Radon transforms. In the general case it can be proved as follows.

 We first note that, by definition of the Lie group, the map
\[ \varkappa:\,  G \times G  \to G,  \qquad (x,y) \to  xy^{-1},\]
is smooth. Hence the function $F= f\circ \varkappa $ is smooth on  $G \times G$, as a composition of smooth maps.  Fix any  coordinate  chart $(U, \vp)$  for  $G$ and let $B=\vp(U)$ be a Euclidean ball in $\bbr^d$, $d=\dim G$. Consider  the diffeomorphism
$ B \times G  \to U \times G$,  $\,(\xi,y) \to  (\vp^{-1} \xi,y)$,
and let $\tilde F (\xi, y)=F (\vp^{-1} \xi,y)$. If $x\in U$, the convolution $f \ast \mu$ is locally  represented as
\be\label{oaws} (f \ast \mu ) (\vp^{-1} \xi) =\intl_G \tilde F (\xi, y)\,  d\mu (y), \qquad \xi \in B.\ee
The function $\tilde F$ is smooth on $B \times G$ as a composition of smooth maps.
 Note also that $\tilde F (\xi, y)$ is smooth as a function of two variables if and only if
both  ``single-variable functions''  $\xi \to \tilde F (\xi, y)$  and   $y \to \tilde F (\xi, y)$ are smooth. In particular, if $(\xi, y)\to \tilde F(\xi, y)$ is smooth, then any partial
derivative $(\xi, y) \to \partial_\xi \tilde F (\xi, y)$ is smooth too.
The last observation allows us to differentiate under the sign of integration in (\ref{oaws}) infinitely many times, and we are done.
\end{proof}

 \section{Funk Transforms  on Stiefel and Grassmann Manifolds}\label {Funk T}

Let  $V_{n, k}$ and $V_{n, m}$ be a pair of  Stiefel manifolds; $ 1\le k,m\le n-1$.
We consider  Funk-type transforms, which are formally defined by
 \be \label {la3v}(F_{m,k} f) (u)=\intl_{\{v\in V_{n, m}: \, u'v=0\}} f(v)\,d_u v,
\qquad u\!\in\! V_{n, k},\ee
 \be \label {la3vd}(\fd \vp) (v)=\intl_{\{u\in V_{n, k}: \, u'v=0\}} \vp(u)\,d_v u,
\qquad v\!\in\! V_{n, m}.\ee
 The condition $u'v=0$ means that the  subspaces $\{u\}\in \gnk$ and $\{v\} \in \gnm$ are mutually orthogonal. Hence,    necessarily,
 \[k+m\le n.\]

To give  $(F_{m,k} f) (u)$ and $(\fd \vp) (v)$ precise meaning,  we set
 \[ u_0= \left[\begin{array} {c}  0 \\  I_{k} \end{array} \right] \in V_{n, k},
  \qquad  v_0= \left[\begin{array} {c}  I_{m} \\ 0 \end{array} \right]\in V_{n, m},
 \]
and let $ g_u$ and  $ g_v$ be orthogonal transformations satisfying $g_u u_0=u$,
 $g_v v_0=v$.  Denote $f_u (v)=f (g_u v)$, $\vp_v (u)=\vp (g_v u)$.
 Then  (\ref{la3v}) and (\ref{la3vd}) can be  explicitly written  as
 \be\label{876a}
(F_{m, k}f) (u)\!=\!\!\intl_{V_{n-k,m}} \!\!\!\!\!f_u\left(
\left[\begin{array} {c} \vartheta
\\0
\end{array} \right]\right) d_*\vartheta\!=\! \!\intl_{O(n-k)} \!\!\!\!\!f_u\left(
\left[\begin{array} {cc} \gam & 0 \\ 0 & I_k
\end{array} \right]  v_0\right)d_*\gam,
 \ee
 \be \label{876ab}(\fd\vp) (v)\!=\!\!\intl_{V_{n-m,k}} \!\!\!\!\!\vp_v\left(
\left[\begin{array} {c} 0
\\\theta
\end{array} \right]\right) d_*\theta\!= \!\! \intl_{O(n-m)} \!\!\!\!\!\vp_v\left(
\left[\begin{array} {cc} I_m& 0 \\ 0 & \rho
\end{array} \right]  u_0\right) d_*\rho.
 \ee
These expressions are independent of the afore-mentioned choice of $ g_u$ and  $ g_v$ and agree with the case $m=k=1$ of the unit sphere. Operators  $F_{m, k}$ and $\fd $ are $O(n)$-equivariant,
the
 function $F_{m, k}f$ is right $O(k)$-invariant, and $\fd \vp$  is right $O(m)$-invariant

   If  $k=m$  we set $F_m= F_{m,m}$. In this case,   (\ref{la3v}) and (\ref{la3vd}) essentially coincide.
If $k+m= n$, then $V_{n-k,m}=O(m)$, $V_{n-m,k}=O(k)$, and our Funk transforms are  averages of the form
\[(F_{m, n-m}f) (u)\!=\!\! \intl_{O(m)}\!\!\!f_u (v_0 \vartheta) d_*\vartheta, \qquad (\stackrel {*} F_{m, n-m} \vp) (v)\!=\! \!\intl_{O(k)} \!\!\!\vp_v (u_0 \theta) d_*\theta.\]

Note also that
\be\label{0zz09a}
F_{m, k} f= F_{m, k} f_{ave}, \qquad \fd   \vp =\fd   \vp_{ave},\ee
where
\[
 f_{ave} (v)=\intl_{O(m)} f (v\b)\, d_*\b, \qquad  \vp_{ave} (u)=\intl_{O(k)} \vp(u \a)\, d_*\a.\]
By Proposition \ref{lkutt},
the maps  $f \to f_{ave}$ and $\vp \to \vp_{ave}$ act from $L^1$ to $L^1$ and from $C^\infty$ to $C^\infty$ on the corresponding Stiefel manifolds. The functions  $f$, for which $f_{ave}=0$, belong to the kernel (the null space) of the operator  $F_{m, k}$   (similarly for $\fd$). Thus, in general,   $F_{m, k}$  and $\fd$  are non-injective.

The Funk transforms  $F_{m, k}f$ and $\fd \vp $ can be thought of as convolutions on the group $G=O(n)$ with delta measures $\mu_U$ and $\mu_V$ associated with  stabilizers
\[
U=\left \{g\in G:\, g= \left[\begin{array} {cc} \gam & 0 \\ 0 & I_k
\end{array} \right], \quad \gam \in O(n-k)\right \},\]
\[
V=\left \{g\in G:\, g= \left[\begin{array} {cc} I_m& 0 \\ 0 & \rho
\end{array} \right], \quad \rho \in O(n-m)\right \}\]
of $u_0$ and $v_0$, respectively. These measures are defined by
\[
\intl_G  \!\om (g)  d\mu_U (g)\!=\! \intl_U  \!\om (g) d_U g,\quad \intl_G  \!\om (g)  d\mu_V (g)\!=\! \intl_V  \!\om (g) d_V g, \quad
\om \in C(G),\]
where $d_U g$ and $d_V g$ are the relevant Haar probability measures. We denote
\[
\vp_0 (\a) =\vp (\a u_0), \qquad  f_0 (\b) =f (\b v_0); \qquad \a,\b \in G.\]
Then, by (\ref {876a}) and (\ref {876ab}),
\be\label{0ui09a}
(F_{m, k}f) (\a u_0)\!=\!\intl_G \!f_0 (\a g^{-1}) d\mu_U (g),  \ee
\be\label{0ui09ad} (\fd\vp) (\b v_0)\!=\!\intl_{G} \!\vp_0 (\b g^{-1}) d\mu_V (g). \ee

\begin{lemma} \label {aqr1}  Let $ 1\le k,m\le n-1$; $\,k+m\le n$. The operators $F_{m, k}$ and $\fd$ act  from $L^1$ to $L^1$ and from $C^\infty$ to $C^\infty$  on the corresponding Stiefel manifolds. Moreover,
\be\label{009a}\intl_{V_{n, k}}(F_{m, k}f)(u)\, \vp(u)\,
d_*u=\intl_{\vnm}f(v)\,(\fd \vp)(v)\, d_*v,\ee provided that at
least one of these integrals is finite when $f$ and $\vp$ are
replaced by $|f|$ and $|\vp|$, respectively.
\end{lemma}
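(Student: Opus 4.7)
The plan is to lift everything to the ambient compact group $O(n)$ via the identifications $\vnk\cong O(n)/U$ and $\vnm\cong O(n)/V$, and then combine Fubini's theorem with the convolution representations (\ref{0ui09a})--(\ref{0ui09ad}) and Proposition \ref{biid}. The lifts $f_0(\b)=f(\b v_0)$ and $\vp_0(\a)=\vp(\a u_0)$ are right $V$- and right $U$-invariant, respectively, and the standard Stiefel-to-group correspondence (detailed in the Appendix) preserves both $L^1$-norms and smoothness.

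For the $L^1\to L^1$ assertion, formula (\ref{0ui09a}) presents $F_{m,k}f$ as the convolution $f_0*\mu_U$ on $O(n)$ with a probability measure, so by Tonelli $\|f_0*\mu_U\|_{L^1(O(n))}\le\|f_0\|_{L^1(O(n))}$, and descent back to $\vnk$ gives $\|F_{m,k}f\|_{L^1(\vnk)}\le\|f\|_{L^1(\vnm)}$. For the $C^\infty\to C^\infty$ assertion, Proposition \ref{biid} yields $f_0*\mu_U\in C^\infty(O(n))$; the substitution $g\mapsto gu$ together with right-invariance of the Haar measure on $U$ establishes right $U$-invariance of this convolution, so it descends to a smooth function on $\vnk$. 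The parallel claims for $\fd$ follow by interchanging the roles of $(k,U)$ and $(m,V)$.

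For the duality (\ref{009a}), I would introduce the incidence submanifold $\mathcal I=\{(u,v)\in\vnk\times\vnm:\,u'v=0\}$, on which $O(n)$ acts transitively by the diagonal action with stabilizer $H=\{\diag(I_m,\rho,I_k):\,\rho\in O(n-k-m)\}$ at the base point $(u_0,v_0)$. Hence $\mathcal I\cong O(n)/H$ carries a unique $O(n)$-invariant probability measure $d_{*}(u,v)$. Disintegrating this measure along the projection $\mathcal I\to\vnk$ produces the Haar probability measure downstairs and, by $O(n)$-equivariance, the probability Haar measure on each fiber $\{v\in\vnm:\,u'v=0\}\cong V_{n-k,m}$, which matches the measure $d_uv$ used in (\ref{la3v}); a symmetric statement holds for the other projection and for $d_vu$. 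Consequently both sides of (\ref{009a}) equal $\int_{\mathcal I}f(v)\vp(u)\,d_{*}(u,v)$, and (\ref{009a}) follows whenever Tonelli applies. The principal technical point is matching the two disintegrated fiber measures with the operator-theoretic normalizations in (\ref{la3v})--(\ref{la3vd}); by $O(n)$-equivariance this reduces to verifying normalization at the single base point $(u_0,v_0)$, which is immediate from the choice of Haar probability measures throughout.
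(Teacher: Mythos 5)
Your proposal is correct. For the mapping properties ($L^1\to L^1$ and $C^\infty\to C^\infty$) you follow essentially the paper's own route: the paper also reads these off the convolution representations (\ref{0ui09a})--(\ref{0ui09ad}) together with Proposition \ref{biid} and the lifting/descent correspondence of Proposition \ref{thmVNM}; your explicit check of right $U$-invariance of $f_0\ast\mu_U$ via right invariance of the Haar measure on $U$ is exactly the step needed to descend from $O(n)$ to $\vnk$. For the duality (\ref{009a}), however, you take a genuinely different route. The paper does not carry out the incidence-manifold argument: it cites \cite[Lemma 3.2]{Ru13} for a direct proof and, for the more general identity (\ref{009adu}), proves duality by lifting both sides to $O(n)$, writing the transform as an iterated integral over subgroups, and using invariance of the Haar measure on $O(n)$ under right translations plus Fubini. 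You instead realize Helgason's double fibration concretely: $\mathcal I=\{(u,v):u'v=0\}\cong O(n)/H$ with the stabilizer $H$ you identify (this is where $k+m\le n$ is used), its unique invariant probability measure, and disintegration along the two projections, so that both sides of (\ref{009a}) become one integral over $\mathcal I$ and the Tonelli proviso is transparent. What this buys is a coordinate-free, conceptual proof; its cost is the measure-theoretic overhead: disintegration yields conditional measures only almost everywhere, so to identify them with $d_uv$ for every $u$ you should either invoke equivariance and uniqueness of the invariant probability measure on each fiber (the fiber over $u_0$ is $V_{n-k,m}$ acted on transitively by $U\cong O(n-k)$, matching (\ref{876a})) to fix the equivariant version, or, more simply, reverse the logic: define a measure on $\mathcal I$ by the left-hand iterated integral, verify it is an $O(n)$-invariant probability measure, and conclude by uniqueness that the right-hand iterated integral defines the same measure. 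The paper's group-level change of variables is more computational but sidesteps disintegration altogether; with the small patch above, your argument is a sound alternative.
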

\begin{proof} The first statement follows from (\ref{0ui09a}) and (\ref{0ui09ad}), taking into account the  properties of convolutions on compact Lie groups (use, e.g.,  Propositions \ref{thmVNM}  and   \ref{biid}).
  The duality  (\ref{009a}) agrees with Helgason's double fibration scheme \cite[p. 144]{H00}. A straightforward proof of (\ref{009a}) can be found in  \cite [Lemma 3.2] {Ru13}; see also (\ref{009adu}) for the  more general statement.
\end{proof}

 There is an obvious relationship between the  Funk transforms  (\ref{la3v}) and (\ref{la3vd}) and
Radon type transforms on Grassmannians, defined by
 \be \label {gla3vd}
(R_{p,q}  \check f)(\eta)=\intl_{\xi
\subset \eta} \!\! \check f(\xi)\, d_\eta \xi, \qquad (\stackrel{*}{R}_{p,q}  \vp_{\!\circ})(\xi)= \intl_{\eta \supset \xi}   \vp_{\!\circ}(\eta) \, d_\xi \eta,\ee
\[
\xi \in G_{n,p}, \qquad \eta \in G_{n,q}, \qquad 1\le p\le q\le n-1,\]
  $d_\eta \xi$ and $ d_\xi \eta$ being the relevant probability measures.
Specifically, suppose that $f$ is a right $O(m)$-invariant function on $\vnm$, $\vp$ is a right $O(k)$-invariant  function on $\vnk$, and set $p=m$, $q=n-k$. If we define   $\check f$ on $G_{n,m}$  and   $\vp_{\!\circ}$ on   $G_{n,n-k}$ by
 \be \label {glpod}
\check f(\{v\})=f(v),  \qquad \vp_{\!\circ} (u^\perp) =\vp (u), \ee
then
 \be\label {ioyv}(F_{m, k}f)(u)\!=\! (R_{m,n-k} \check f)(u^\perp), \quad (\fd \vp)(v)\!= \! (\stackrel{*}{R}_{m,n-k} \vp_{\!\circ})(\{v\}).\ee
In the case $p=q$, both expressions in (\ref{gla3vd}) represent the identity maps.

\begin{lemma} \label {paqr1} $1\le p\le q\le n-1$. The operators $R_{p,q}$ and $\stackrel{*}{R}_{p,q}$ act  from $L^1$ to $L^1$ and from $C^\infty$ to $C^\infty$  on the corresponding Grassmannians. Moreover,
\be\label{009agr} \intl_{G_{n,q}}(R_{p,q}  \check f)(\eta)\, \vp_{\!\circ}(\eta)\,
d_*\eta=\intl_{G_{n,p}} \check f(\xi)\, (\stackrel{*}{R}_{p,q}  \vp_{\!\circ})(\xi)\, d_*\xi,\ee
provided that at
least one of these integrals is finite when $\check f$ and $\vp_{\!\circ}$ are
replaced by $|\check f|$ and $|\vp_{\!\circ}|$, respectively.
\end{lemma}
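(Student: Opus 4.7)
The plan is to reduce everything to Lemma \ref{aqr1} via the dictionary (\ref{glpod})--(\ref{ioyv}). Set $m=p$ and $k=n-q$, so the hypothesis $1\le p\le q\le n-1$ translates exactly into $1\le k,m\le n-1$ and $k+m\le n$, which is the admissibility range of $F_{m,k}$. The orthogonal complement $u\mapsto u^\perp$ identifies $G_{n,k}$ with $G_{n,n-k}=G_{n,q}$, and the canonical projections $V_{n,m}\to G_{n,m}$, $V_{n,k}\to G_{n,k}$ identify $L^1(G_{n,m})$ and $C^\infty(G_{n,m})$ with $L^1(V_{n,m})^{O(m)}$ and $C^\infty(V_{n,m})^{O(m)}$ respectively (and similarly for $k$); these identifications are part of the auxiliary material collected in the Appendix (cf.\ Proposition \ref{lkutt} and related facts about the differentiable structure of Grassmannians obtained as quotients of Stiefel manifolds).

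First I would verify that $R_{p,q}$ is the pushdown of $F_{m,k}$. Given $\check f\in L^1(G_{n,p})$, define $f\in L^1(V_{n,m})^{O(m)}$ by $f(v)=\check f(\{v\})$. By Lemma \ref{aqr1}, $F_{m,k}f\in L^1(V_{n,k})$ (resp.\ $C^\infty$ if $\check f\in C^\infty$), and an inspection of (\ref{876a}) shows that $F_{m,k}f$ is also right $O(k)$-invariant (the integration subgroup $U$ commutes with right multiplication by $\mathrm{diag}(I_{n-k},\alpha)$ for $\alpha\in O(k)$). Hence $F_{m,k}f$ descends to a function on $G_{n,k}$, which by (\ref{ioyv}) equals $R_{p,q}\check f\circ(\cdot)^\perp$. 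Since the complement map is a diffeomorphism (and an isometry with respect to the Haar probability measures), this delivers the $L^1\to L^1$ and $C^\infty\to C^\infty$ mapping property for $R_{p,q}$. The argument for $\stackrel{*}{R}_{p,q}$ is identical, starting from $\vp_{\!\circ}\in L^1(G_{n,q})$, lifting to $\vp(u)=\vp_{\!\circ}(u^\perp)\in L^1(V_{n,k})^{O(k)}$ and invoking Lemma \ref{aqr1} for $\fd$.

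For the duality (\ref{009agr}), apply (\ref{009a}) to the lifts $f$ and $\vp$. Because $f$ is right $O(m)$-invariant and $F_{m,k}f$ is right $O(k)$-invariant, both sides of (\ref{009a}) are integrals of right-invariant functions over Stiefel manifolds; invariance of Haar measure under the canonical projections $V_{n,m}\to G_{n,m}$ and $V_{n,k}\to G_{n,k}$ (combined with the orthogonal complement isomorphism $G_{n,k}\cong G_{n,q}$) converts them into the corresponding Grassmannian integrals, yielding (\ref{009agr}). The admissibility of the exchange of integrals under the $L^1$-hypothesis follows from Fubini once either side is finite for $|\check f|, |\vp_{\!\circ}|$.

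The only step that requires more than routine bookkeeping is checking that the lifting $\check f\mapsto f$ really preserves $C^\infty$ and that the induced integration on the quotient is compatible with the Haar measure; this is exactly what the Appendix is designed to handle, so the main obstacle is conceptual rather than technical—one must make sure that the identification of $C^\infty(G_{n,m})$ with $C^\infty(V_{n,m})^{O(m)}$ is invoked in a form precise enough to justify both the mapping property and the change-of-variables in the duality.
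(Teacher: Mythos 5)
Your proposal is correct and follows essentially the same route as the paper, which disposes of Lemma \ref{paqr1} by reducing it to Lemma \ref{aqr1} together with the Stiefel--Grassmann dictionary (Remark \ref{kjutrgr}, Proposition \ref{muu1}, and the relations (\ref{glpod})--(\ref{ioyv})). Your write-up merely spells out the lifting, the $O(k)$-invariance of $F_{m,k}f$, and the measure compatibility that the paper leaves implicit.
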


This well known statement follows from Lemma \ref {aqr1} and Remark \ref{kjutrgr}. Just as Lemma \ref {aqr1}, it also  falls into the scope of Helgason's double fibration theory.

\section{The $\lam$-Cosine  Transforms}\label{t0mby}

\noindent
In this section we follow our paper \cite{Ru13}, however, the notation for some parameters   has been changed for the sake of consistency  with \cite{Ru15, Ru20}.

\subsection{Preparations}   Let $1\le m, k\le n-1$. The non-normalized $\lam$-cosine transform   and its dual are defined by
\bea
\label{0mby}(\C^{\lam}_{m, k} f)(u)&=&\intl_{\vnm} \!\!\!f(v)\,
|u'v|_m^{\lam} \, d_*v, \qquad u\in \vnk,\\
\label{0mbyd}(\cd0 \vp)(v)&=&\intl_{\vnk} \!\!\vp(u)\, |u'v|_m^{\lam} \, d_*u,\qquad v\in \vnm.
\eea
   In the self-adjoint case  $m=k$ we set
\[(\C^{\lam}_{m} f)(u)=(\C^{\lam}_{m, m} f)(u)\equiv\intl_{\vnm} \!\!\!f(v)\, |\det (u'v)|^{\lam}
\, d_*v.\]
Recall that $|u'v|_m= \det (v'uu'v)^{1/2}$, where $v'uu'v$ is a positive semi-definite $m\times m$ matrix.
We restrict our consideration to $m\le k$, because, otherwise, $|u'v|_m=0$ for all $v\in \vnm$ and  $u\in\vnk$.

The functions  $\C^{\lam}_{m, k} f$ and    $\cd0 \vp$ are  a right $O(k)$-invariant and right $O(m)$-invariant, respectively. Moreover,
\be\label {iou6v1}
 \C^{\lam}_{m, k} f = \C^{\lam}_{m, k} f_{ave}, \qquad \cd0 \vp = \cd0 \vp_{ave}, \ee
as in (\ref{0zz09a}).

Because the quantity $|u'v|_m$ is invariant under  change of variables $u \to u\a$, $\a \in O(k)$, and $v \to v\b$,  $\b \in O(m)$, it is actually a function of Grassmannian variables
\[\xi=\{v\} \in G_{n,m} \quad \text {\rm  and}  \quad \t=\{u\} \in G_{n,k}. \]
We denote this function by  $|\Cos (\xi, \t)|$, taking into account that  if $k=m=1$, then $|u'v|_m$  is exactly the cosine of the smallest angle between the lines $\xi$ and $\t$.  Thus we define
\be\label{u78udf}
|\Cos (\xi, \t)|\equiv |\Cos (\{v\}, \{u\})| = |u'v|_m.\ee
This definition does not depend on the choice of the orthonormal bases $v$ in $\xi$ and $u$ in $\t$.  Geometrically, $|\Cos (\xi, \t)|$ is the $m$-volume of the orthogonal projection onto $\t$ of a generic set of unit volume in $\xi$.

Setting
\[
\check f(\xi)\equiv \check f (\{v\})=f(v), \qquad \check \vp(\t)\equiv \check \vp (\{u\})=\vp (u),\]
\be\label{u7zf}  (T^{\lam}_{m, k} \check f)(\t)\equiv (T^{\lam}_{m, k} \check f)(\{u\})   =(\C^{\lam}_{m, k} f)(u),\ee
\be\label{u7zfa}
 (\cdt \check \vp)(\xi)\equiv (\cdt \check \vp)(\{v\}) =(\cd0 \vp)(v),\ee
we can write  (\ref{0mby}) and (\ref{0mbyd}) in the Grassmannian language as
\bea\label{0mrrby}(T^{\lam}_{m, k} \check f)(\t)&=&\intl_{G_{n,m}} \!\!\!\check f(\xi)\, |\Cos (\xi, \t)|^\lam d_* \xi, \qquad \t \in G_{n,k}, \\
\label{0mrrbydf}  (\cdt \check \vp)(\xi)&=& \intl_{G_{n,k}} \!\!\check \vp(\t)\,  |\Cos (\xi, \t)|^\lam \, d_*\t,\qquad \xi \in G_{n,m},\eea
and reformulate all our results in these terms. However,  for the sake of convenience (especially in proofs), we prefer the Stiefel terminology.

Note that, unlike (\ref{0mby}) and (\ref{0mbyd})\footnote {Although $GL(n,\bbr)$ does not act directly on $\vnm$, one can consider representations of this group
on the spaces of right $O(m)$-invariant functions on $\vnm$; see \cite{OPR}, \cite [Section 7.4.3]{Ru13},  and references therein.}, the operators $T^{\lam}_{m, k}$ and $\cdt$ are $GL(n,\bbr)$-equivariant, because
\be\label{0cvbydf}
|\Cos (g\xi, g\t)| =   |\Cos (\xi, \t)|\quad \text{\rm for all} \quad g\in GL(n,\bbr).\ee
The latter can be easily checked if we write $g$ in polar coordinates $g=\om r^{1/2}$, where $\om \in O(n)$ and $r=g'g$ is a positive definite $n\times n$ matrix. Specifically, if $\xi=\{v\}$, then
\bea
g\xi&=&g\{v\}=\{gv\}=\{\om r^{1/2} v\}\nonumber\\
&=&\om  \{r^{1/2} v\}= \om  \{v\}= \{\om v\}\nonumber\eea
(similarly for $g\t$). Hence
\bea
|\Cos (g\xi, g\t)|&=& |\Cos (\{\om v\}, \{\om u\})| =  |(\om u)'(\om v)|_m \nonumber\\
&=&|u'v|_m =  |\Cos (\xi, \t)|.\nonumber\eea

\begin{lemma} \label{lhgn1} Let  $1\le m\le k\le  n-1$. If  $\Re \lam>m-k-1$, then the operators $\C^{\lam}_{m, k}$,  $\cd0$, $T^{\lam}_{m, k}$, and  $\cdt$ act  from $L^1$ to $L^1$ and from $C^\infty$ to $C^\infty$  on the corresponding Stiefel or Grassmann manifolds.
\end{lemma}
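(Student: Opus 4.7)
My plan is to realize each of the four operators as a convolution on the compact group $O(n)$ against a finite Radon measure, then invoke Young's inequality on a compact group for the $L^1\to L^1$ statement and Proposition \ref{biid} for the $C^\infty\to C^\infty$ statement. The Grassmannian versions $T^\lam_{m,k}$ and $\cdt$ will be deduced from the Stiefel versions $\C^\lam_{m,k}$ and $\cd0$ via the bijections (\ref{u7zf})--(\ref{u7zfa}) between right $O(m)$- (resp.\ $O(k)$-)invariant functions on Stiefel manifolds and arbitrary functions on the corresponding Grassmannians.

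The decisive preliminary step is to verify that the kernel $v\mapsto |u'v|_m^\lam$ is absolutely integrable over $\vnm$ for each fixed $u\in\vnk$ precisely when $\Re\lam>m-k-1$. By left $O(n)$-invariance I may take $u=u_0$; then $u_0'v$ is the $k\times m$ submatrix formed by the last $k$ rows of $v$ and $|u_0'v|_m^2=\det(v'u_0u_0'v)$. Using the polar-type factorization of the invariant measure on $\vnm$ adapted to the splitting $\rn=\{u_0\}^\perp\oplus\{u_0\}$ (this is part of the Appendix machinery the paper is preparing), the Gram matrix $s=v'u_0u_0'v\in\Omega_m\cap\{s\le I_m\}$ carries a density proportional to $\det(s)^{(k-m-1)/2}\det(I_m-s)^{(n-k-m-1)/2}$, so that
\[
\intl_{\vnm}|u_0'v|_m^{\lam}\,d_*v \;=\; c_{n,k,m}\intl_{0\le s\le I_m}\det(s)^{(\lam+k-m-1)/2}\det(I_m-s)^{(n-k-m-1)/2}\,ds.
\]
This is a Siegel beta integral, and it converges exactly under $\Re\lam>m-k-1$ (the companion condition $n-k-m\ge 0$ is the standing hypothesis $k+m\le n$; the case $n-k-m=-1$, i.e.\ $k+m=n$, can be handled separately by direct reduction to an integral over $O(m)$).

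With integrability in hand, the cosine transform is rewritten as an $O(n)$-convolution exactly as for the Funk transforms in (\ref{0ui09a}). For $u=\a u_0$ and $v=\gam v_0$, the substitution $\gam\to\a\gam$ in (\ref{0mby}) gives
\[
(\C^\lam_{m,k}f)(\a u_0)=\intl_{O(n)} f_0(\a\gam)\,|u_0'\gam v_0|_m^\lam\,d_*\gam \;=\; (f_0\ast\mu_\lam)(\a),
\]
with $f_0(\gam):=f(\gam v_0)$ and $d\mu_\lam(g):=|u_0'g^{-1}v_0|_m^\lam\,d_*g$. By the previous step $\mu_\lam$ is a finite Radon measure on $O(n)$, so Young's inequality on $O(n)$ gives the $L^1$-bound and Proposition \ref{biid} gives the $C^\infty$-bound. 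The right $O(n-m)$-invariance of $f_0$ together with the right $O(k)$-invariance of $\C^\lam_{m,k}f$ descends the estimates from $O(n)$ to $\vnm$ and $\vnk$, exactly as in the proof of Lemma \ref{aqr1}. The dual $\cd0$ is treated identically, with the roles of $(u_0,k)$ and $(v_0,m)$ swapped; the integrability threshold is unchanged because $|u'v|_m=|v'u|_m$.

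The principal obstacle is the explicit derivation of the Siegel beta integral above, which is what pins the critical exponent at $m-k-1$. This requires the precise polar-coordinate factorization of the Haar measure on $\vnm$ adapted to the subspace $\{u_0\}^\perp$; it is standard but somewhat technical, and (as the paper notes in its Appendix discussion) I would want to cite or reprove it carefully. Once that factorization is in place, the remaining steps are routine convolution estimates on a compact Lie group, formally parallel to the treatment of the Funk transforms $F_{m,k}$, $\fd$ in Lemma \ref{aqr1}.
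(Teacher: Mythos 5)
Your overall strategy --- writing $\C^{\lam}_{m,k}f$ and $\cd0\vp$ as convolutions on $O(n)$ with kernel $|u_0'\gam v_0|_m^{\lam}$, reducing everything to the integrability of that kernel, invoking Proposition \ref{biid} for the $C^\infty$ statement, and descending to $T^{\lam}_{m,k}$ and $\cdt$ through the invariant identifications --- is exactly the paper's. The gap is in the step that is supposed to pin down the integrability threshold. Lemma \ref{lhgn1} assumes only $1\le m\le k\le n-1$; it does \emph{not} assume $k+m\le n$ (that restriction belongs to the Funk transforms of Section \ref{Funk T}, not to the cosine transforms), so your assertion that $k+m\le n$ is ``the standing hypothesis'' is false for this lemma. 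Your Siegel beta computation requires $n-k\ge m$: only then does the Gram matrix $s=v'u_0u_0'v$ have an absolutely continuous distribution on $\{0<s<I_m\}$ with density proportional to $\det(s)^{(k-m-1)/2}\det(I_m-s)^{(n-k-m-1)/2}$. If $k+m>n$ (e.g.\ $n=3$, $m=k=2$), every subspace $\{v\}$ meets $\{u_0\}$ in dimension at least $m+k-n>0$, so at least that many eigenvalues of $s$ are pinned at $1$, $\det(I_m-s)\equiv 0$, and $s$ is carried by a boundary face of $\{s\le I_m\}$; the density you wrote does not exist, and the formal beta integral (with $\beta=(n-k)/2\le (m-1)/2$) diverges even though the actual Stiefel integral is finite. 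Your side remark also misfires: $k+m=n$ gives $n-k-m=0$, not $-1$, and that case needs no separate treatment in the beta picture; the regime your derivation genuinely fails to reach is $k+m>n$.

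The repair is what the paper does: instead of deriving the threshold from a beta density, quote the closed-form evaluation (\ref{ddsil}),
\[
\intl_{\vnm} |u_0'v|_m^{\lam}\, d_*v \;=\; \frac{\Gam_m(n/2)\,\Gam_m((\lam+k)/2)}{\Gam_m(k/2)\,\Gam_m((\lam+n)/2)}
\]
(formula (A.16) of \cite{Ru13}), which holds for $\Re\lam>m-k-1$ with no relation imposed between $k+m$ and $n$; Remark \ref{reem1} records that this threshold is sharp. With that substitution the rest of your argument goes through and coincides with the paper's proof: the convolution bound gives the $L^1$ statement, Proposition \ref{biid} gives the $C^\infty$ statement (as in Lemma \ref{aqr1}), and the Grassmannian operators are handled via Remark \ref{kjutrgr} and Proposition \ref{muu1}.
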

\begin{proof}  We can write $(\C^{\lam}_{m, k} f)(u)$ and $(\cd0 \vp)(v)$ as  convolutions on the group $G=O(n)$.  Specifically, let
\[
 u_0\!= \!\left[\begin{array} {c}  0 \\  I_{k} \end{array} \right]\in V_{n, k}, \quad  v_0\!=\! \left[\begin{array} {c}  I_{m} \\  0 \end{array} \right]\in V_{n, m}; \qquad u=\a u_0,  \quad v=\b v_0.\]
Setting $ f_0(\b)= f(\b v_0)$,   $ \vp_0(\a)= \vp(\a u_0)$, we obtain
\[
(\C^{\lam}_{m, k} f)(\a u_0)=\intl_G  f_0(\a\gam^{-1})\, h(\gam)\, d_*\gam,\]
\[ (\cd0\vp)(\b v_0)=\intl_G  \vp_0(\b\gam^{-1}) \,h^*(\gam)\, d_*\gam,\]
where
$h(\gam)=|u'_0 \gam' v_0|_m^{\lam}$,  $\;h^*(\gam)=|u'_0 \gam v_0|_m^{\lam}$  (recall that $\gam^{-1}=\gam'$). If $\Re \lam>m-k-1$, then  $h$ and $h^*$ are  integrable on $G$. The latter follows from the equality
\bea\label{ddsil}
\intl_G h(\gam) \, d_*\gam \!\!&=&\!\!\!\intl_G h^*(\gam) \, d_*\gam\nonumber\\
\label{ddsil}\!\!&=&\!\!\!\intl_{\vnm} |u'_0 v|_m^{\lam}\, d_*v=\frac{\Gam_{m} (n/2)\, \Gam_{m} ((\lam +k)/2)}
{\Gam_{m} (k/2)\,\Gam_{m}((\lam+n)/2)};\eea
see \cite  [formula (A.16)]{Ru13}.  Now the $L^1 $ action is obvious and the smoothness result  holds by Proposition \ref{biid}.  The corresponding statements for  $T^{\lam}_{m, k}$   and  $\cdt$ then follow from (\ref {u7zf})
and   (\ref {u7zfa})  by Remark \ref{kjutrgr} and Proposition \ref{muu1}.
  \end{proof}

   \begin{remark}\label {reem1} {\rm The condition $\Re \lam>m-k-1$  in Lemma \ref{lhgn1} is sharp because $(\C^{\lam}_{m, k}1)(u)\!=\!(\cd0 1)(v)$ coincide with (\ref{ddsil}). If $Re\,\lam\le m-k-1$, then the gamma function
 $\Gam_{m} ((\lam +k)/2)$ in this expression represents  a divergent integral.}
\end{remark}

\subsection{Connection Between the $\lam$-Cosine Transform and Its Dual}

Given $u \in V_{n,k}$ and $v \in V_{n,m}$, we  denote by $\tilde u \in V_{n, n-k}$ and $\tilde v\in V_{n, n-m} $  arbitrary frames,  which are orthogonal to the subspaces $ \{u\}=\span (u)$ and $ \{v\}=\span (v)$, respectively. By Proposition \ref{mnxmnb1} there is  a one-to-one  correspondence
$  f   \simeq f_*$  between   the right $O(m)$-invariant  functions $f$ on $\vnm$ and right $O(n-m)$-invariant
 functions $f_*$ on $V_{n, n-m}$  (similarly $  \vp   \simeq \vp_*$). Clearly,
 \[
  \intl_{V_{n,k}} \!\vp (u)\,d_*u= \!\!\intl_{V_{n,n-k}} \vp_* (\tilde u)\,d_*\tilde u,\qquad    \intl_{V_{n,m}} \!f (v)\,d_*v= \!\!\intl_{V_{n,n-m}} f_* (\tilde v)\,d_*\tilde v,\]
  which follows from (\ref{wgted}).

\begin{proposition} \label {mnxmnb} Let  $1\le m\le k\le  n-1$,  $\Re \lam>m-k-1$. If $\vp \in L^1 (V_{n,k})^{O(k)}$, then
\be\label {k5gg80}
(\cd0\vp)(v)= (\C^{\lam}_{n-k, n-m}  \vp_*)(\tilde v ).\ee
\end {proposition}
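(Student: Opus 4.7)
The plan is to reduce the proposition to a single geometric identity between cosines and then transport the integral from $V_{n,k}$ to $V_{n,n-k}$ using the bijection $\vp \leftrightarrow \vp_*$ supplied by Proposition~\ref{mnxmnb1} and the measure equality already recorded in the paragraph preceding the statement.

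The central claim I need is the \emph{complementary cosine identity}: for $u\in V_{n,k}$, $v\in V_{n,m}$ with $m\le k$, and for any choices of orthogonal frames $\tilde u\in V_{n,n-k}$, $\tilde v\in V_{n,n-m}$ perpendicular to $\{u\}$ and $\{v\}$ respectively,
\[
|u'v|_m \;=\; |\tilde u'\tilde v|_{n-k}.
\]
To prove it I would form the two orthogonal $n\times n$ matrices $[u,\tilde u]$ and $[v,\tilde v]$, and consider their product
\[
[u,\tilde u]'[v,\tilde v] \;=\; \begin{bmatrix} u'v & u'\tilde v \\ \tilde u'v & \tilde u'\tilde v \end{bmatrix},
\]
which is itself orthogonal. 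Reading off the $(1,1)$ block of $M'M=I_n$ and the $(2,2)$ block of $MM'=I_n$ (with $M$ denoting the displayed matrix) gives
\[
(u'v)'(u'v)=I_m-C'C, \qquad (\tilde u'\tilde v)(\tilde u'\tilde v)'=I_{n-k}-CC',
\]
where $C=\tilde u'v$ is of size $(n-k)\times m$. The classical Sylvester identity $\det(I_m-C'C)=\det(I_{n-k}-CC')$ (equivalently, $C'C$ and $CC'$ share nonzero eigenvalues) then yields the desired equality of $|u'v|_m^2$ and $|\tilde u'\tilde v|_{n-k}^2$. Note that this is independent of the ambiguity in the choice of $\tilde u$ and $\tilde v$, since a right action by $O(n-k)$ on $\tilde u$ or by $O(n-m)$ on $\tilde v$ affects $C$ only up to orthogonal equivalence and leaves both sides invariant.

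With the cosine identity in hand, the proposition is almost immediate. Starting from
\[
(\cd \vp)(v)=\intl_{V_{n,k}}\vp(u)\,|u'v|_m^{\lam}\,d_* u,
\]
the right $O(k)$-invariance of $\vp$ lets me pass to $\vp_*$ via $\vp(u)=\vp_*(\tilde u)$, the measure relation $\int_{V_{n,k}}\vp(u)\,d_*u=\int_{V_{n,n-k}}\vp_*(\tilde u)\,d_*\tilde u$ from the preceding display transports the integration to $V_{n,n-k}$, and finally the cosine identity rewrites the kernel as $|\tilde u'\tilde v|_{n-k}^{\lam}$. The resulting expression is exactly $(\C^{\lam}_{n-k,n-m}\vp_*)(\tilde v)$, and the convergence is guaranteed by $\Re\lam>m-k-1=(n-k)-(n-m)-1$ together with Lemma~\ref{lhgn1}.

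The only real substance is the complementary cosine identity; the rest is bookkeeping based on Proposition~\ref{mnxmnb1}. The main obstacle, such as it is, lies in handling the non-square blocks cleanly: one must resist the temptation to write $|\det A|=|\det D|$ (valid only in the square case $k=m$) and instead invoke the Sylvester-type determinant identity that equates $\det(I-C'C)$ with $\det(I-CC')$ even when $C$ is rectangular.
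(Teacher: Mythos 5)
Your proposal is correct and follows essentially the same route as the paper: the heart of both arguments is the identity $\det\bigl((u'v)'(u'v)\bigr)=\det\bigl(\tilde u'\tilde v\,\tilde v'\tilde u\bigr)$ obtained from Sylvester's rectangular determinant identity applied to $C=\tilde u'v$ (your block-orthogonality reading of $[u,\tilde u]'[v,\tilde v]$ is just the paper's use of $uu'+\tilde u\tilde u'=I_n$ and $vv'+\tilde v\tilde v'=I_n$ in different notation), followed by transporting the integral via the correspondence $\vp\simeq\vp_*$ and the measure equality of Proposition \ref{muu1}. The only cosmetic point is that the transported kernel should be written $|\tilde v'\tilde u|_{n-k}$ in the paper's convention, which is exactly the quantity your $(2,2)$-block computation produces.
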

\begin{proof}






Note that
   $\det (v'uu'v)= \det (\tilde u'\tilde v \tilde v'  \tilde u)$.
  The latter can be proved using Sylvester's equality
  \[  \det (I_m - ab) = \det (I_n - ba); \qquad  a\in \frM_{m,n}, \; b\in \frM_{n,m}.\]
 Indeed,
\[
 \det (v'uu'v)=\det (I_m-v'\tilde u \tilde u'v)=\det (I_{n-k}-\tilde u'vv'\tilde u )=\det (\tilde u'\tilde v \tilde v'  \tilde u ). \]
Hence, by (\ref{wgted}),
\bea
(\cd0\vp)(v)&=&\intl_{\vnk} \!\!\vp(u)\, |u'v|_m^{\lam} \, d_*u\nonumber\\
&=&\intl_{ V_{n, n-k}}  \vp_* (\tilde u) \, |\tilde v'  \tilde u |_{n-k}^{\lam} \, d_*\tilde u=(\C^{\lam}_{n-k, n-m}  \vp_*)(\tilde v ). \nonumber\eea
\end {proof}

\subsection{Analytic Continuation}   Given a frame $u\in \vnk$, we denote by $g_u$  an orthogonal transformation that takes $u_0= \left[\begin{array} {c}  0 \\  I_{k} \end{array} \right]\in\! V_{n,k}$ to $u$ and set $f_u (v)=f (g_u v)$, $v\in \vnm$.

\begin{theorem}\label{lxhgn1}
Let  $1\le m\le k\le  n-1$. If  $f\in C^\infty (\vnm)$, then  the function
\[ \lam \mapsto (\C^{\lam}_{m,k} f)(u), \qquad \Re \lam > m-k-1,\]
 extends meromorphically to  $\Re \lam \leq m-k-1$. The polar set of the extended function consists of the poles
 $ \;
m-k-1, m-k-2,\dots\;$ of  $\gm((\lam +k)/2)$.
 The normalized
integral
\[ I_f (\lam,u)=\frac{(\C^{\lam}_{m,k} f)(u)}{\gm((\lam +k)/2)}\]
 is an entire
function of $\lam$. Moreover, if
\be\label {kug5ez}
m-n \le j-k \le m-k -1, \qquad j \ge 0,\ee
then
\bea\label{09kys7x}
I_f (j-k,u) &\equiv& \underset
{\lam =j -k}{a.c.}  I_f (\lam,u)\\
&=& c_j  \intl_{O(k)} d\gam \intl_{V_{n-k+j,m}}\!\!\! f_u \left(  \left[\begin{array} {cc} I_{n-k} & 0 \\  0 & \gam \end{array} \right] \left[\begin{array} {c}  \om \\  0 \end{array} \right]\right)\,d_*\om,\nonumber\eea
\[
 c_j = \frac{\gm(n/2)}{\gm(k/2)\, \gm((n-k+j)/2)}.\]
 In particular, if $j=0$, $n-k\ge m$, then
\be
\underset {\lam = -k}{a.c.} \, \frac{(\C^{\lam}_{m,k} f)(u)}{\gm((\lam +k)/2)} \!=\! c_0\, (F_{m, k}f) (u),  \quad  c_0 \!=\! \frac{\gm(n/2)}{\gm(k/2)\, \gm((n\!-\!k)/2)},\ee
 where $F_{m, k}f$ is the Funk transform  (\ref{876a}). If $j -k< m-n$, then $I_f (j-k,u) \equiv 0$.
\end{theorem}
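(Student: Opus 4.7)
\emph{Plan.} I would reduce $(\C^{\lam}_{m,k}f)(u)$ to a zeta integral on $\frM_{k,m}$ and apply Lemmas~\ref{lacz} and \ref{tzk}. The first step is to extend the Stiefel integration to the ambient matrix space via the polar decomposition $x = vs^{1/2}$ with $v\in V_{n,m}$, $s = x'x \in \Om_m$. Using $|u'vs^{1/2}|_m = \det(s)^{1/2}|u'v|_m$ together with the integral representation (\ref{2.4}) of $\gm$, one obtains, for $\Re\lam > m-k-1$,
\be
 c_{n,m}\,\gm\!\left(\tfrac{\lam+n}{2}\right)(\C^{\lam}_{m,k}f)(u) = \intl_{\frM_{n,m}} f(v(x))\, |u'x|_m^\lam\, e^{-\tr(x'x)}\, dx,
\ee
where $v(x) = x(x'x)^{-1/2}$ and $c_{n,m} = \pi^{nm/2}/\gm(n/2)$ is the Stiefel normalization.

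\emph{Reduction to a zeta integral.} Next, I would split $x = uy + \tilde u\tilde y$ with $\tilde u \in V_{n,n-k}$ spanning $u^{\perp}$, so that $y = u'x \in \frM_{k,m}$, $\tilde y\in\frM_{n-k,m}$, $x'x = y'y + \tilde y'\tilde y$, $dx = dy\, d\tilde y$, and, crucially, $|u'x|_m = |y|_m$. Integrating $\tilde y$ out first yields
\be
 c_{n,m}\,\gm\!\left(\tfrac{\lam+n}{2}\right)(\C^{\lam}_{m,k}f)(u) = \Z_{k,m}(H_u,\lam),
\ee
\be
 H_u(y) := e^{-\tr(y'y)}\intl_{\frM_{n-k,m}}\! f(v(uy+\tilde u\tilde y))\, e^{-\tr(\tilde y'\tilde y)}\, d\tilde y.
\ee
Provided $H_u$ is smooth and sufficiently decaying on $\frM_{k,m}$, Lemma~\ref{lacz} applied to $\Z_{k,m}$ gives
\be
 I_f(\lam,u) = \frac{\zeta_{k,m}(H_u,\lam)}{c_{n,m}\,\gm((\lam+n)/2)},
\ee
which is entire in $\lam$ because the numerator is entire and $1/\gm((\lam+n)/2)$ is entire; the pole set of $(\C^{\lam}_{m,k}f)(u)$ therefore coincides with that of $\gm((\lam+k)/2)$.

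\emph{Special values and the obstacle.} To extract (\ref{09kys7x}) at $\lam = j-k$, I would apply Lemma~\ref{tzk} to $\zeta_{k,m}(H_u,j-k)$, substitute $y = \gam\bigl[\begin{smallmatrix}\om\\ 0\end{smallmatrix}\bigr]$ with $\gam\in O(k)$, $\om\in\frM_{j,m}$, and use the $O(n)$-equivariance $v(gx) = g\, v(x)$ with $g = \bigl[\begin{smallmatrix}I_{n-k}&0\\ 0&\gam\end{smallmatrix}\bigr]$ to merge $\tilde y$ and $\om$ into a single matrix $\mu = \bigl[\begin{smallmatrix}\tilde y\\ \om\end{smallmatrix}\bigr]\in\frM_{n-k+j,m}$; this is precisely where the constraint $n-k+j\ge m$, i.e.\ $j-k\ge m-n$, is needed for $V_{n-k+j,m}$ to be well-defined. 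A second polar decomposition on $\frM_{n-k+j,m}$ produces the factor $\gm((n-k+j)/2)$, which cancels exactly with $\gm((\lam+n)/2)|_{\lam=j-k}$, and the leftover powers of $\pi$ combine with $\gm(n/2)$ to give the announced constant $c_j$. The case $j=0$ is then a direct rewriting of $H_u(0)$ via (\ref{876a}). When instead $j-k<m-n$, the argument $(n-k+j)/2$ lies in the pole set of $\gm$, so $1/\gm((\lam+n)/2)$ vanishes at $\lam = j-k$ while $\zeta_{k,m}(H_u,j-k)$ remains finite, forcing $I_f(j-k,u)\equiv 0$. The main obstacle is justifying the regularity of $H_u$: since $v(x)$ is homogeneous of degree zero and fails to be smooth on the low-rank locus, invoking Lemma~\ref{lacz} requires a careful tempered-distribution extension of $H_u$ near the singular set, which is exactly the role of the homogeneous-continuation framework collected in the Appendix.
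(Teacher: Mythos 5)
Your outline is the paper's own route: lift the Stiefel integral to $\frM_{n,m}$ by polar decomposition, split off the $u^\perp$-component so that $(\C^{\lam}_{m,k}f)(u)$ becomes a zeta integral $\Z_{k,m}(\cdot,\lam)$ on $\frM_{k,m}$, and then invoke Lemmas \ref{lacz} and \ref{tzk}; your constant bookkeeping at $\lam=j-k$ does reproduce $c_j$, and deducing the vanishing for $j-k<m-n$ from the zero of $1/\gm((\lam+n)/2)$ is a legitimate variant of the paper's argument.

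The genuine gap is precisely the point you defer at the end, and the remedy you point to does not exist. Lemmas \ref{lacz} and \ref{tzk} require the test function to lie in $S(\frM_{k,m})$, but your $H_u$ is in general not Schwartz --- not even $C^1$. Already for $m=k=1$, $n=2$, $f(v)=v_2$ (smooth on $V_{2,1}=S^1$), one gets $H_u(y)=e^{-y^2}\int_{\bbr} y\,(t^2+y^2)^{-1/2}e^{-t^2}\,dt$, which behaves like $2y\log(1/|y|)$ near $y=0$, so its derivative blows up there. The reason is that $x\mapsto x(x'x)^{-1/2}$ is smooth only on $\tilde\frM_{n,m}$, and integrating over $\tilde y$ does not restore smoothness of $y\mapsto H_u(y)$ across the set where $uy+\tilde u\tilde y$ drops rank. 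The Appendix cannot repair this: Lemma \ref{wnvp} only identifies $C^\infty(\vnm)$ with smooth functions on the open full-rank set $\tilde\frM_{n,m}$; it confirms the singularity rather than removing it. With $H_u\notin S(\frM_{k,m})$, neither the meromorphic continuation of $\Z_{k,m}(H_u,\lam)$ with the stated pole structure nor the special-value formulas of Lemma \ref{tzk} (which are measures on a lower-rank set and need smooth test functions) is licensed, so the central steps of your plan are unsupported as written.

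The missing idea, supplied in the paper's proof, is the cutoff: one inserts a nonnegative $\psi\in C^\infty(\Om_m)$ with compact support away from the boundary of $\Om_m$ into the weight, as in (\ref{0bg54bys})--(\ref{j7n9}). Then $\vp(x)=f(x(x'x)^{-1/2})\,\psi(x'x)\,e^{-\tr(x'x)}$ vanishes identically near the rank-deficient locus, hence is Schwartz, and its partial integral $\tilde\vp$ in (\ref{45d}) lies in $S(\frM_{k,m})$, so Lemmas \ref{lacz} and \ref{tzk} apply. The price is that your explicit factor $\gm((\lam+n)/2)$ is replaced by a $\psi$-dependent factor $\varkappa(\lam)$, and one must argue that the continuation is independent of $\psi$ (uniqueness of analytic continuation); the case $j-k<m-n$ is then settled by the support of $\psi$ ($\vp\equiv 0$ on the relevant lower-rank set) rather than by a zero of $1/\gm((\lam+n)/2)$. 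To salvage your version you must either prove directly that your specific non-smooth $H_u$ still admits the continuation and special values claimed (nontrivial, and not covered by the quoted lemmas), or adopt the cutoff and recover the clean constants at the end, as the paper does.
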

\begin{proof}
The reasoning below is a generalization of \cite [Subsection 7.1]{Ru13}.
By invariance, it suffices to assume $u=u_0= \left[\begin{array} {c}  0 \\  I_{k} \end{array} \right]$.  Let
\be\label{0bg54bys}
F (\lam) =\intl_{\frM_{n,m}} f (x (x'x)^{-1/2})\, |u'_0x|_m^{\lam}\, \psi (x'x)\,  e^{-\tr (x'x)}\,dx,
\ee
where $\psi $ is a nonnegative $C^\infty$ function on the cone $\Om_m$  (see Notation)
 with compact support away from the boundary of $\Om_m$.
 The function
 \be\label {j7n9} \vp  (x)\equiv f (x (x'x)^{-1/2})\, \psi (x'x)\,  e^{-\tr (x'x)}\ee
 belongs to $S  (\frM_{n,m})$ and is supported away from the surface $\det (x'x)=0$.    Passing to polar coordinates
 $x=wr^{1/2}$, $w\in \vnm$, $r\in \Om_m$   (see Lemma \ref{l2.3}), we obtain
  \[F (\lam) =\varkappa (\lam)\, (\C^{\lam}_{m,k} f)(u_0), \]
 \[\varkappa  (\lam)=2^{-m} \sig_{n,m}\intl_{\Om_m} \det(r)^{(\lam+n-m-1)/2} \psi (r)\,  e^{-\tr (r)}\, dr.\]
 Because $\varkappa (\lam)$ and its reciprocal are entire functions, the  analyticity  of $\lam \to (\C^{\lam}_{m,k} f)(u_0)$ is equivalent to that of $F (\lam)$  and  the poles of both functions  are  the same and   have the same order.

The integral  (\ref{0bg54bys}) can be represented as
\[ F (\lam)=\intl_{\frM_{n,m}}\vp (x) \, |u'_0x|_m^{\lam}\,dx=\intl_{\frM_{k,m}}\tilde\vp (y)\,|y|_m^{\lam}\,dy= \Z_{k,m}(\tilde\vp, \lam)\]
(cf. (\ref{zeta})), where the function
\be \label {45d} \tilde\vp (y)=\intl_{\frM_{n-k,m}}
\vp  \left(\left[\begin{array} {c}  \eta \\  y \end{array} \right]\right)\, d \eta \ee
belongs to  $S  (\frM_{k,m})$.  Thus,
\be \label {45cxd}
(\C^{\lam}_{m,k} f)(u_0)=\varkappa (\lam)^{-1}\,\Z_{k,m}(\tilde\vp, \lam)\ee
 and
\be \label {45xxad}
 I_f (\lam,u_0)=\frac{\Z_{k,m}(\tilde\vp, \lam)}{\varkappa (\lam)\,\gm((\lam +k)/2)}= \frac{\z_{k,m}(\tilde\vp, \lam)}{\varkappa (\lam)}; \ee
cf. (\ref{gzeta}).

 Analytic properties of  $\Z_{k,m}(\tilde\vp, \lam)$ and $\z_{k,m}(\tilde\vp, \lam)$ are described in Lemmas \ref{lacz}  and \ref{tzk} (with $n$ replaced by $k$). In particular, the integral
 $\Z_{k,m}(\tilde\vp, \lam)$ converges absolutely if $Re\, \lam > m-k-1$ and
 extends to $Re\, \lam \leq m-k-1$
as a meromorphic function of $ \; \lam$. The polar set of the extended function is a subset  of the set of poles
 $ \;
m-k-1, m-k-2,\dots\;$ of  $\gm((\lam +k)/2)$. The normalized integral $\z_{k,m}(\tilde\vp, \lam)$ is an entire
function of $\lam$.  Since the left-hand side of (\ref{45xxad}) is independent of the choice of $\psi$, the analytic continuation of the right-hand side (in which $\psi$ is hidden) is independent  of $\psi$ too, thanks to the uniqueness property of analytic functions.

If $0\le j\le m-1$,
then for $\lam = j-k \le m-k -1$ we have
 \be\label{09kys} \z_{k,m}(\tilde\vp, j-k) =\frac{\pi^{(k-j)m/2}}{\Gam_m(k/2)} \intl_{O(k)} d\gam \intl_{\frM_{j,m}} \tilde \vp  \left(\gam \left[\begin{array} {c}  \om \\  0 \end{array} \right]\right)\, d \om. \ee
In particular,
\be\label{09kys1}
\z_{k,m}(\tilde\vp, -k) =\frac{\pi^{km/2}}{\Gam_m(k/2)} \tilde \vp (0).\ee
Combining (\ref{09kys}) and (\ref{09kys1}) with (\ref{45d}) and (\ref{j7n9}),  we obtain
\[
I_f (j\!-\!k,u_0)\!=\!\frac{\pi^{(k-j)m/2}}{\Gam_m(k/2)\, \varkappa (j\!-\!k)} \intl_{O(k)}\! d\gam \!\intl_{\frM_{n-k+j,m}} \!\!\!\!\!\vp  \left(\tilde \gam \left[\begin{array} {c}  \xi \\  0 \end{array} \right]\right) d \xi, \]
\[
\tilde \gam= \left[\begin{array} {cc} I_{n-k} & 0 \\  0 & \gam \end{array} \right] \in O(n),\]
\[
\vp  \left(\tilde \gam \left[\begin{array} {c}  \xi \\  0 \end{array} \right]\right)=  f  \left(\tilde \gam \left[\begin{array} {c}  \xi \\  0 \end{array} \right]  (\xi' \xi)^{-1/2}   \right)\, \psi (\xi' \xi)
 e^{-\tr (\xi' \xi)}.\]
If $n-k+j\ge m$, that is, $m-n \le j-k$   (cf. (\ref{kug5ez})),
 then, passing to polar coordinates in $\frM_{n-k+j,m}$ (see Lemma \ref{l2.3}), we get  $I_f (j-k,u_0)=c\,I_1 \,I_2$,
 where
 $$
I_1\!=\! \intl_{O(k)} d\gam \intl_{V_{n-k+j,m}}\!\!\! f \left( \tilde \gam \left[\begin{array} {c}  \om \\  0 \end{array} \right]\right)\,d_*\om, $$
$$
I_2\!=\!\intl_{\Om_m} \! \det (r)^{(n-k+j -m-1)/2} \psi (r)\,  e^{-\tr (r)}\, dr,
$$
$$
c=\frac{\pi^{(k-j)m/2}\, \sigma_{n-k+j,m}}{\sigma_{n,m}\,\gm(k/2) \,I_2}=\frac{\gm(n/2)}{\gm(k/2)\, \gm((n-k+j)/2)}\,\frac{1}{I_2}\, .
$$
Hence,  if $\max (m+k-n, 0)\le j \le m-1$, then
\[
I_f (j-k,u_0)= c_j  \intl_{O(k)} d\gam \intl_{V_{n-k+j,m}}\!\!\! f \left(  \left[\begin{array} {cc} I_{n-k} & 0 \\  0 & \gam \end{array} \right] \left[\begin{array} {c}  \om \\  0 \end{array} \right]\right)\,d_*\om,\]
\[
 c_j = \frac{\gm(n/2)}{\gm(k/2)\, \gm((n-k+j)/2)}.\]
 If $j=0$ and $n-k \ge m$, the expression for  $I_f (j-k,u)$ has a simpler form
 \[
I_f (-k,u_0)\!=\! c_0 \!\! \intl_{V_{n-k,m}}\!\!\! f \left(\left[\begin{array} {c}  \om \\  0 \end{array} \right]\right) d_*\om= c_0\, (F_{m, k}f) (u_0);\]
cf. (\ref{876a}). If $j -k< m-n$, then the rank of $\xi$ is less than $m$, $\xi'\xi$ is the boundary point of the cone $\Om_m$,   and therefore, by the definition of  $\vp$ in (\ref{j7n9}), we have $\vp\equiv 0$. This gives $I_f (j-k,u_0)\equiv 0$.
\end{proof}

\begin{remark}\label {mbmku}
The function $I_f (\lam,u)=(\C^{\lam}_{m,k} f)(u)/\gm((\lam +k)/2)$ may have zeros at some $\lam$. For example, if $f\equiv 1$, then, by (\ref{ddsil}),
\[
I_f (\lam,u)=\frac{\Gam_{m} (n/2)}{\Gam_{m} (k/2)\,\Gam_{m}((\lam+n)/2)}=0 \quad \forall \, \lam=m-n-1, m-n-2, \ldots\, .\]
Thus $ \C^{\lam}_{m,k} f$ and $\gm((\lam +k)/2)$ may have poles of different order.
\footnote { The converse statement in
 \cite[Theorem 7.1(i)]{Ru13} should be corrected.}
\end{remark}

\begin{lemma}  \label {jygc} Let  $1\le m\le k\le  n-1$. If $f\in C^\infty (\vnm)$, then the function
\be \label {a45cxdz}
 u \to a.c.\, \frac{(\C^{\lam}_{m,k} f)(u)}{\gm((\lam +k)/2)}, \qquad u \in \vnk,\ee
is infinitely differentiable  for every complex $\lam$.
\end{lemma}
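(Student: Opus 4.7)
The plan is to reduce smoothness of $u\mapsto I_f(\lam,u)$ to smoothness of an absolutely convergent zeta integral by combining the representation obtained in the proof of Theorem \ref{lxhgn1} with the Bernstein-type identity (\ref{oao}) and a local trivialization of the bundle $O(n)\to\vnk$. First I fix an arbitrary base point $u_*\in\vnk$ and choose, on some open neighborhood $W$ of $u_*$, a smooth local section $s:W\to O(n)$ satisfying $s(u)u_0=u$; such sections exist because $\vnk\simeq O(n)/O(n-k)$ is a principal $O(n-k)$-bundle. Setting $f_u(v):=f(s(u)v)$, the $O(n)$-equivariance of $\C^\lam_{m,k}$ gives
\[ I_f(\lam,u)=I_{f_u}(\lam,u_0),\qquad u\in W, \]
so it suffices to establish smoothness of the right-hand side on $W$ for each complex $\lam$.

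Next I reuse the construction from the proof of Theorem \ref{lxhgn1}. Pick a nonnegative $\psi\in C^\infty(\Om_m)$ with compact support disjoint from $\d\Om_m$ and set
\[ \vp_u(x)=f_u\!\left(x(x'x)^{-1/2}\right)\psi(x'x)\,e^{-\tr(x'x)},\qquad \tilde\vp_u(y)=\intl_{\frM_{n-k,m}}\vp_u\!\left(\left[\begin{array}{c}\eta\\ y\end{array}\right]\right)d\eta. \]
Because $s(u)$ depends smoothly on $u$ and $x\mapsto x(x'x)^{-1/2}$ is smooth on $\{\det(x'x)\neq 0\}\supset\supp\psi(x'x)$, the assignment $u\mapsto\tilde\vp_u$ is a smooth map of $W$ into $S(\frM_{k,m})$; moreover, every Schwartz seminorm of $\tilde\vp_u$ together with its $u$-derivatives is bounded uniformly on compacta of $W$. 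Formula (\ref{45xxad}) now reads $I_{f_u}(\lam,u_0)=\z_{k,m}(\tilde\vp_u,\lam)/\varkappa(\lam)$, with $\varkappa(\lam)$ entire, nonvanishing, and independent of $u$.

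Given an arbitrary $\lam_0\in\bbc$, choose an integer $\ell$ so large that $\Re\lam_0+2\ell>m-k-1$. On a disk $V$ around $\lam_0$ the Bernstein identity (\ref{oao}) yields, as an equality of meromorphic functions,
\[ \z_{k,m}(\tilde\vp_u,\lam)=\frac{\Z_{k,m}(\Del^\ell\tilde\vp_u,\lam+2\ell)}{B_{\ell,m,k}(\lam)\,\gm((\lam+k)/2)},\qquad \lam\in V. \]
For $\lam\in V$ the numerator is the absolutely convergent integral $\intl_{\frM_{k,m}}|y|_m^{\lam+2\ell}\,\Del^\ell\tilde\vp_u(y)\,dy$, whose integrand depends smoothly on $u\in W$ with uniform control on compact subsets; differentiation under the integral sign makes $u\mapsto\Z_{k,m}(\Del^\ell\tilde\vp_u,\lam+2\ell)$ a $C^\infty$ function on $W$ for each $\lam\in V$. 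If the scalar denominator is finite and nonzero at $\lam_0$, smoothness of $u\mapsto I_f(\lam_0,u)$ is immediate. Otherwise I would shrink $V$ so that no other zero or pole of the denominator lies in $V\setminus\{\lam_0\}$; since $\lam\mapsto I_f(\lam,u)$ is entire (Theorem \ref{lxhgn1}), Cauchy's formula
\[ I_f(\lam_0,u)=\frac{1}{2\pi i}\oint_{|\lam-\lam_0|=r} \frac{I_f(\lam,u)}{\lam-\lam_0}\,d\lam, \]
applied on a small circle inside $V$ on which the scalar denominator stays bounded away from $0$ and $\infty$, expresses $I_f(\lam_0,u)$ as a contour integral of an integrand smooth in $u$ uniformly in $\lam$, and differentiation in $u$ commutes with the contour integral.

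The main technical obstacle is the uniform-on-compacta control of Schwartz seminorms of $\tilde\vp_u$ and of its $u$-derivatives, needed both to differentiate under the zeta integral and, in the singular case, to commute $u$-differentiation with the Cauchy contour. This is a bookkeeping exercise: combine the smoothness of $s$, the chain rule applied to $f_u(x(x'x)^{-1/2})$ on $\supp\psi(x'x)$, and the Schwartz decay forced by $\psi(x'x)e^{-\tr(x'x)}$, with the polar-coordinate formulas of Lemma \ref{l2.3} underpinning the integrability in $y$.
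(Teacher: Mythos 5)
Your argument is correct and essentially reproduces the paper's proof: the same $\psi$-cutoff/polar-decomposition construction of $\tilde\vp$, the same use of the Bernstein identity (\ref{oao}) to pass to an absolutely convergent zeta integral, and differentiation under the integral sign in the base-point variable, the only cosmetic difference being that you localize via a smooth section of $O(n)\to\vnk$ where the paper pulls everything back to $O(n)$ and invokes Proposition \ref{thmVNM}. Your additional Cauchy-integral step at the exceptional values of $\lam$ where the scalar factor $B_{\ell,m,k}(\lam)\,\gm((\lam+k)/2)$ degenerates is a legitimate (and welcome) way to cover points that the paper's displayed formula handles only implicitly.
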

\begin{proof} Let first $\Re \lam > m-k-1$. We replace $f$ in (\ref{45cxd})  by $f_\gam = f\circ \gam$, $\gam \in G=O(n)$, to get
\[(\C^{\lam}_{m,k} f)(\gam u_0)=\varkappa (\lam)^{-1}\,\Z_{k,m}(\tilde\vp_\gam, \lam),\]
\[ \tilde\vp_\gam (y)\!=\!\!\intl_{\frM_{n-k,m}}\!\! \vp_\gam  \left(\left[\begin{array} {c}  \eta \\  y \end{array} \right]\right) d \eta, \qquad  y\in \frM_{k,m},\]
\[ \vp_\gam  (x)\!=\!\om (x) f(\gam v)\big |_{v=x (x'x)^{-1/2}},\quad
\om (x)\!=\! \psi (x'x)\,  e^{-\tr (x'x)} \!\in C^\infty (\frM_{n,m}),\]
where $\varkappa (\lam)$ and its reciprocal are entire functions of $\lam$; cf. (\ref{j7n9}), (\ref{45d}).
The maps
\[w : G \times \vnm \to \vnm, \qquad (\gam, v) \to \gam v, \]
\[ \sig : \tilde \frM_{n,m} \to \vnm, \qquad  x \to x (x'x)^{-1/2},\]
 are smooth; see Proposition \ref{mmxzc}    and the proof of Lemma \ref{wnvp}. Hence the function
\bea
F(\gam, x)&\equiv& \vp_\gam  (x)=\om (x) f(\gam v)\big |_{v=x (x'x)^{-1/2}}\nonumber\\
&=&\om (x) (f\circ w)(\gam, \sig (x))\nonumber\eea
is  smooth  on $G\times \tilde \frM_{n,m}$, and therefore
\[
F_1(\gam, \eta, y)=\vp_\gam  \left(\left[\begin{array} {c}  \eta \\  y \end{array} \right]\right)=F\left (\gam, \left[\begin{array} {c}  \eta \\  y \end{array} \right]\right)\]
is a smooth function on $G \times \tilde \frM_{n-k,m} \times \tilde \frM_{k,m}$. It follows that $ \tilde\vp_\gam (y)$ is a smooth function of $(\gam, y)\in G \times \tilde \frM_{k,m}$.
Using the meromorphic continuation formula (\ref{oao}) for zeta integrals, we obtain
\bea
a.c. \,(\C^{\lam}_{m, k} f)(\gam u_0)&=&\frac{1}{\varkappa (\lam)\, B_{\ell,m,k}(\lam)}\, \Z_{k,m}(\Del ^\ell \tilde\vp_\gam,\lam +2\ell)\nonumber\\
 &=&\frac{1}{\varkappa (\lam)\, B_{\ell,m,k}(\lam)} \intl_{\frM_{k,m}}\!\! (\Del ^\ell \tilde\vp_\gam)(y)\, |y|_m^{\lam +2\ell}\, dy,\qquad \nonumber\eea
 $$
 Re \,\lam > m-k-1-2\ell,\qquad  \ell=1,2, \ldots, \, ,$$
where $B_{\ell,m,k}(\lam)$ is the Bernstein  polynomial (\ref{bka}) (with $n$ replaced by $k$).
By above,  $\Del ^\ell \tilde\vp_\gam (y)$ is a smooth function of  $(\gam, y)$, and therefore, $a.c. \,(\C^{\lam}_{m, k} f)(\gam u_0)$ is a smooth function of $\gam \in G$.
Now the smoothness of the normalized function (\ref{a45cxdz}) follows from  Proposition \ref{thmVNM}.
\end{proof}

The next statement is an analogue of Theorem \ref{lxhgn1} and Lemma \ref{jygc} for the dual  transform  $\cd0 \vp$.

\begin{theorem}\label{lxhgnf} Let  $1\le m\le k\le  n-1$.

\vskip 0.2 truecm

\noindent {\rm (i)} If  $\vp \in C^\infty (\vnk)$, then  the function
$ \lam \mapsto (\cd0 \vp)(v)$
 extends  meromorphically to   $\Re \lam \leq m-k-1$ for every $v \!\in\! V_{n,m}$.   The polar set of the extended function consists of the poles
 $ \;
m-k-1, m-k-2,\dots\;$ of $\Gam_{n-k}((\lam +n-m)/2)$.

\vskip 0.2 truecm

\noindent {\rm (ii)}  The normalized function
 \[\lam \to \frac{(\cd0 \vp)(v)}{\Gam_{n-k}((\lam +n-m)/2)}\]
 is an entire
function of $\lam$  belonging to  $C^\infty (\vnm)^{O(m)}$ in the $v$-variable.

\vskip 0.2 truecm

\noindent {\rm (iii)} An alternative normalized function
\[ \lam \to \frac{(\cd0 \vp)(v)}{\gm((\lam +k)/2)}\]
 extends
meromorphically    with the only possible poles
\[
-k-1, \,-k -2,\dots \,. \]
\end{theorem}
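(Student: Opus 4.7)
The plan is to deduce all three parts from the results already established for the direct cosine transform $\C^{\lam}_{m,k}$, via the duality identity in Proposition \ref{mnxmnb}. Since the kernel $|u'v|_m^\lam$ is right $O(k)$-invariant in $u$, we have $\cd0 \vp = \cd0 \vp_{ave}$ (as in (\ref{iou6v1})), and $\vp_{ave} \in C^\infty(\vnk)^{O(k)}$ by Proposition \ref{lkutt}. So I may assume from the outset that $\vp$ is right $O(k)$-invariant. Then, by Proposition \ref{mnxmnb}, for $\Re\lam>m-k-1$,
$$
(\cd0 \vp)(v) = (\C^{\lam}_{n-k,\, n-m}\, \vp_*)(\tilde v),
$$
where $\vp_* \in C^\infty(V_{n,n-k})^{O(n-k)}$ corresponds to $\vp$ via Proposition \ref{mnxmnb1}, and $\tilde v \in V_{n,n-m}$ is any frame for $\{v\}^\perp$. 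Since the hypotheses $1\le m\le k\le n-1$ are equivalent to $1\le n-k\le n-m\le n-1$, one can apply Theorem \ref{lxhgn1} verbatim with $(m,k)$ replaced by $(n-k,n-m)$. The Siegel gamma function appearing there becomes $\Gam_{n-k}((\lam+n-m)/2)$, whose polar set is precisely $\{m-k-1,\,m-k-2,\ldots\}$. This yields (i) and the entirety portion of (ii).

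For the smoothness claim in (ii), I apply Lemma \ref{jygc} to $\C^{\lam}_{n-k,n-m}\vp_*$, obtaining a smooth function on $V_{n,n-m}$ that is right $O(n-m)$-invariant. By the Appendix material, this function descends to a smooth function on $G_{n,n-m}$. Pulling it back under the smooth map $v\mapsto \{v\}^\perp$ from $\vnm$ to $G_{n,n-m}$ produces the desired smooth function on $\vnm$, and the right $O(m)$-invariance is automatic because the value depends only on $\{v\}$.

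For (iii), part (ii) lets me write
$$
\frac{(\cd0 \vp)(v)}{\Gam_m((\lam+k)/2)} \;=\; \frac{\Gam_{n-k}((\lam+n-m)/2)}{\Gam_m((\lam+k)/2)}\, E(\lam,v),
$$
where $E(\lam,v)$ is entire in $\lam$. The task thus reduces to computing the polar structure of the ratio of Siegel gammas. Using the product formula (\ref{2.4}) and reindexing each product so that its factors are of the form $\Gam((\lam+k-m+1+i)/2)$, the denominator runs over $i=0,\ldots,m-1$ and the numerator over $i=0,\ldots,n-k-1$. When $m+k\le n$, the denominator cancels the first $m$ factors of the numerator, leaving
$$
\pi^{c}\prod_{i=0}^{n-k-m-1} \Gam\!\left(\tfrac{\lam+k+1+i}{2}\right),
$$
whose poles occur precisely at $\lam=-k-1-i-2\ell$, $0\le i\le n-k-m-1$, $\ell\ge 0$, all of which lie in $\{-k-1,\,-k-2,\ldots\}$. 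When $m+k>n$, the ratio is the reciprocal of a product of $\Gam$'s, hence has no poles at all. In either case, the possible poles of the normalized function lie in $\{-k-1,\,-k-2,\ldots\}$, which is (iii). The main obstacle is purely bookkeeping in this last step: carefully tracking the cancellation between the two Siegel gammas and separating the two cases $m+k\le n$ and $m+k>n$. No new analytic input beyond the preceding results is needed.
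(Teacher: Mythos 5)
Your proposal is correct and follows essentially the same route as the paper: reduce to right $O(k)$-invariant $\vp$ via (\ref{iou6v1}), transfer to the direct transform $\C^{\lam}_{n-k,n-m}\vp_*$ through Proposition \ref{mnxmnb}, and then invoke Theorem \ref{lxhgn1}, Lemma \ref{jygc}, and Proposition \ref{mnxmnb1} for (i)--(ii), with (iii) reduced to the polar structure of the quotient of Siegel gamma functions. The only (cosmetic) difference is in (iii), where you expand both Siegel gammas into ordinary gamma factors and cancel by hand — thereby also treating the case $m+k>n$ explicitly — whereas the paper cites the factorization $\Gam_{n-k}((\lam+n-m)/2)=\pi^{(n-k-m)m/2}\,\Gam_{n-k-m}((\lam+n-m)/2)\,\gm((\lam+k)/2)$ directly.
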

\begin{proof}
By (\ref {iou6v1}), it  suffices  to consider right $O(k)$-invariant functions $\vp$. The statements
(i) and (ii) follow from  Theorem \ref{lxhgn1}, Propositions \ref{jygc}, \ref{mnxmnb}, and \ref{mnxmnb1}. To prove {\rm (iii)}, we observe that by (\ref{2.4}),
\be \Gam_{n-k}((\lam+n-m)/2)= c (\lam)\, \gm((\lam +k)/2)), \ee
where $c (\lam)= \pi^{(n-k-m)m/2}\Gam_{n-k-m}((\lam+n-m)/2)$ is a meromorphic function with the polar set $ \{
-k-1, \,-k-2,\dots \}$.
\end{proof}

\section{Intermediate  Funk-Cosine Transforms} \label{Intermediate}

 Theorem \ref{lxhgn1} leads to new Radon-like transforms
 \be\label{Interm}
 (F_{m,k}^{(j)} f) (u)=  \intl_{O(k)} d_*\gam \intl_{V_{n-k+j,m}}\!\!\!
 f_u \left(  \left[\begin{array} {cc} I_{n-k} & 0 \\  0 & \gam \end{array} \right] \left[\begin{array} {c}  \om \\  0 \end{array} \right]\right)\,d_*\om,\ee
  which take functions on $\vnm$ to functions on $\vnk$. Following this theorem, we assume
 \be\label{cerm} 1\le m\le k\le  n-1, \qquad n-k+j \ge m, \qquad 0\le j \le m-1.\ee
 Recall that
  \[f_u (v)\!=\! f (g_u v), \quad g_u \in G=O(n), \quad g_u u_0 =u, \quad u_0\!=\!\left[\begin{array} {c}  0 \\  I_k \end{array} \right] \in \vnk.\]
  One can   formally write (\ref{Interm}) as
\be\label{Intermra}
 (F_{m,k}^{(j)} f) (u)= \intl_{\{v\in \vnm : \;  {\rm rank} (u' v)\le j\}} \!\!\!\!\! f(v)\,  d_{u,j} (v). \ee
  The case $j=0$ agrees with the usual Funk transform $F_{m,k}$.
   By (\ref{09kys7x}),
\be \label {kfkfn}
\underset {\lam = j-k}{a.c.} \, \frac{(\C^{\lam}_{m,k} f)(u)}{\gm((\lam +k)/2)} \!=\! c_j\, (F_{m,k}^{(j)} f) (u),  \ee
\be\label{cnnerm}
 c_j = \frac{\gm(n/2)}{\gm(k/2)\, \gm((n-k+j)/2)}.\ee


The integral transform (\ref{Interm}) has a nice geometric interpretation in the Grassmannian language  (\ref{gla3vd}). Specifically, suppose first $u=u_0$.
Given a right $O(m)$-invariant function $f$ on $\vnm$, we define the associated function $\check f$ on $G_{n,m}$   by   $\check f (\{v\})=f(v)$. Denote
\[\eta_0 \!=\!\span ( e_{1}, \ldots , e_{n-k})\!= \!\bbr^{n-k}, \quad \z_0\!=\!\span ( e_1, \ldots , e_{n-k+j})\!= \!\bbr^{n-k+j}. \]
 Then the inner integral in (\ref{Interm}) can be written as
\be\label{cneem}
\intl_{\{\xi \in G_{n,m}: \; \xi\subset \tilde \gam\z_0\}} \!\!\!\! \check f (\xi)\, d_{\tilde \gam} \xi\!=\!(R_{m, n-k+j} \check f)(\tilde \gam\z_0), \qquad \tilde \gam \!=\!\left[\begin{array} {cc} I_{n-k} & 0 \\  0 & \gam \end{array} \right].\ee
 Let
\[
 G_{n, n-k+j} (\eta)=\{\z \in G_{n, n-k+j}: \z \supset \eta\}, \qquad  \eta \in G_{n, n-k}.\]
Integrating (\ref{cneem}) over $\gam \in O(k)$, and noting that $\tilde \gam$ leaves $\eta_0$ fixed,
 we obtain
\bea
 (F_{m,k}^{(j)} f) (u_0) &=&\intl_{ G_{n, n-k+j} (\eta_0)}\!\!\!\! (R_{m, n-k+j} \check f)(\z)\, d_*\z\nonumber\\
\label{0yydu}  &=&  (\stackrel {*}R_{n-k, n-k+j} R_{m, n-k+j} \check f)(\eta_0).\eea
 Hence, by rotation invariance,
\be\label{0uudu}
 (F_{m,k}^{(j)} f) (u)= (\stackrel {*}R_{n-k, n-k+j} R_{m, n-k+j} \check f)(\eta), \qquad \eta=u^\perp.\ee
We denote
\be \label {rkfkfn}
 (R_{m,n-k}^{(j)} \check f) (\eta)=(\stackrel {*}R_{n-k, n-k+j} R_{m, n-k+j} \check f)(\eta), \qquad \eta \in G_{n, n-k}.\ee
This expression is the Grassmannian model of the intermediate  Funk-cosine transform  $F_{m,k}^{(j)}$.
If $j=0$, it  boils down to the usual Radon transform $R_{m, n-k} \check f$, as in (\ref{gla3vd}).

The {\it dual intermediate  Funk-cosine transform $\stackrel{*}{F}{}_{m,k}^{\!(j)}$} is naturally defined as an integral operator
satisfying
\be\label{009adu}\intl_{V_{n, k}}(F_{m,k}^{(j)} f) (u)\, \vp(u)\,
d_*u=\intl_{\vnm}f(v)\,(\stackrel{*}{F}{}_{m,k}^{\!(j)} \vp) (v)\, d_*v.\ee

To obtain  an explicit formula for $\stackrel{*}{F}{}_{m,k}^{\!(j)} \vp$, we set
\[u_0\!=\!\left[\begin{array} {c}  0 \\ I_{k} \end{array} \right], \qquad v_0\!=\!  \left[\begin{array} {c}  I_{m} \\ 0 \end{array} \right], \]
 \[
\tilde \gam= \left[\begin{array} {cc} I_{n-k} & 0 \\  0 & \gam \end{array} \right],\quad  \tilde a= \left[\begin{array} {cc} a & 0 \\  0 &  I_{k-j} \end{array} \right], \quad
\tilde b= \left[\begin{array} {cc} I_{m} & 0 \\  0 &  b \end{array} \right],\]
where $\gam \in O(k)$, $\,a \in O(n-k+j)$, $\,b \in O(n-m)$.
Then, by (\ref{Interm}),
 \bea
 (F_{m,k}^{(j)} f) (u)&=&\intl_{O(k)} d_*\gam \intl_{O(n-k+j)}\!\!\! f_u (\tilde \gam  \tilde a  v_0)\, d_*a\nonumber\\
 &=&\intl_{O(n-m)} d_*b \intl_{O(k)} d_*\gam \intl_{O(n-k+j)}\!\!\! f_u (\tilde \gam  \tilde a \tilde b v_0)\, d_*a.\nonumber\eea
 Hence
 \bea
I&=& \intl_{V_{n, k}}(F_{m,k}^{(j)} f) (u)\, \vp(u)\,d_*u=\intl_{O(n)} (F_{m,k}^{(j)} f) (gu_0)\, \vp(gu_0)\,d_*g  \nonumber\\
 &=&\intl_{O(n)} \vp(gu_0)\,d_*g\intl_{O(n-m)} d_*b \intl_{O(k)} d_*\gam \intl_{O(n-k+j)}\!\!\! f (g\tilde \gam  \tilde a  \tilde b v_0)\, d_*a \nonumber\\
 &=&\intl_{O(n)} f(\lam v_0) d_*\lam \intl_{O(n-m)} d_*b \intl_{O(k)} d_*\gam \intl_{O(n-k+j)}\!\!\! \vp (\lam  \tilde b  \tilde a \tilde \gam u_0)\, d_*a. \nonumber\eea
Thus we can set
\[
(\stackrel{*}{F}{}_{m,k}^{\!(j)} \vp)(v)= \intl_{O(n-m)} d_*b \intl_{O(n-k+j)} d_*a \intl_{O(k)}\!\!\! \vp_v (\tilde b  \tilde a \tilde \gam u_0)\, d_*\gam,\]
where  $\vp_v (u)\!=\! \vp (g_v u)$, $\,g_v \in O(n)$,  $\,g_v v_0 =v$.   If  $\vp$ is right $O(k)$-invariant, then
\be\label{00bn9adu}
(\stackrel{*}{F}{}_{m,k}^{\!(j)} \vp)(v)= \intl_{O(n-m)} d_*b \intl_{O(n-k+j)}  \!\!\! \vp_v (\tilde b \tilde a  u_0)\, d_*a\ee
(note  that $\tilde \gam u_0=u_0\gam$).
If $j=0$, the above formula gives  the usual dual Funk transform (\ref{la3vd}).

To obtain a Grassmannian analogue of (\ref{00bn9adu}),  we define a function  $\vp_{\!\circ}$ on $G_{n,n-k}$   by the formula  $\vp_{\!\circ} (u^\perp) =\vp (u)$, $u\in \vnk$   ( cf. (\ref {glpod})),  and set
 $\xi_0 =\{v_0\}$. Then, as in (\ref{0yydu}),
\[
(\stackrel{*}{F}{}_{m,k}^{\!(j)} \vp)(v_0)=  (\stackrel{*}{R}{}_{m,n-k+j} R_{n-k, n-k+j} \vp_{\!\circ})(\xi_0),\]
and, by rotation invariance,
\be \label {rakfkfndu}
(\stackrel{*}{F}{}_{m,k}^{\!(j)} \vp)(v)=  (\stackrel{*}{R}{}_{m,n-k+j} R_{n-k, n-k+j} \vp_{\!\circ})(\{v\}).\ee
Thus the Grassmannian modification of the dual intermediate  Funk-cosine transform is
\be \label {rkfkfndu}
(\stackrel{*}{R}{}_{m,n-k}^{\!(j)} \vp_{\!\circ}) (\xi)= (\stackrel{*}{R}{}_{m,n-k+j} R_{n-k, n-k+j} \vp_{\!\circ})(\xi), \qquad \xi\in G_{n, m}.\ee

\begin{lemma}  \label {jygcs} If  $j, k, m, n$ satisfy (\ref{cerm}), then the operators
 $F_{m,k}^{(j)}$, $\stackrel{*}{F}{\!}_{\!m,k}^{\!(j)}$, $R{}_{m,n-k}^{\!(j)}$, and $\stackrel{*}{R}{}_{m,n-k}^{\!(j)}$
 act  from $L^1$ to $L^1$ and from $C^\infty$ to $C^\infty $ on the corresponding Stiefel or Grassmann manifolds.
 \end{lemma}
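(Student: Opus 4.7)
The plan is to treat the two Grassmannian operators first, since they are by definition compositions of standard Radon transforms, and then deduce the Stiefel statements by the convolution argument already used for the Funk transform in Lemma \ref{aqr1}.

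For $R{}_{m,n-k}^{(j)}$ and $\stackrel{*}{R}{}_{m,n-k}^{(j)}$, the defining formulas (\ref{rkfkfn}) and (\ref{rkfkfndu}) exhibit each as the composition of two elementary Radon-type transforms of the form $R_{p,q}$ or $\stackrel{*}{R}_{p,q}$ with indices satisfying $1\le p\le q\le n-1$; the admissibility condition $n-k+j\ge m$ in (\ref{cerm}) guarantees that the intermediate index lies in the correct range for Lemma \ref{paqr1} to apply. That lemma already provides $L^1\to L^1$ and $C^\infty\to C^\infty$ continuity for each factor, so the corresponding properties of the composition are immediate.

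For the Stiefel operators I would follow the convolution pattern in the proof of Lemma \ref{aqr1}. Setting $u=\alpha u_0$, $f_0(\beta)=f(\beta v_0)$ with $\alpha,\beta\in G=O(n)$, formula (\ref{Interm}) puts $(F_{m,k}^{(j)}f)(\alpha u_0)$ in the form $\int_G f_0(\alpha g^{-1})\,d\mu^{(j)}(g)$, where $\mu^{(j)}$ is the bounded Radon probability measure obtained by pushing the product Haar measure on $O(k)\times O(n-k+j)$ forward under the embedding $(\gamma,a)\mapsto \tilde\gamma\tilde a$ appearing in (\ref{00bn9adu}). The $L^1$ estimate is standard for convolution against a finite measure on a compact group, while the $C^\infty\to C^\infty$ claim follows from Proposition \ref{biid} after lifting $f$ to $G$ and descending back to the Stiefel manifolds via Proposition \ref{thmVNM}. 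The dual $\stackrel{*}{F}{\!}_{\!m,k}^{\!(j)}$ is handled identically from (\ref{00bn9adu}).

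I do not foresee a substantive obstacle: each step reduces to a result already established in the paper. The only mild care is in writing $\mu^{(j)}$ explicitly and confirming it is bounded, which is automatic since it is the pushforward of a probability measure under a continuous map. Alternatively, the convolution step can be bypassed altogether by combining the Grassmannian conclusions with the identifications (\ref{0uudu}) and (\ref{rakfkfndu}): since the inner integral in (\ref{Interm}) already averages against the right $O(m)$-action on $V_{n-k+j,m}$, one has $F_{m,k}^{(j)} f = F_{m,k}^{(j)} f_{ave}$ (and similarly for the dual), so it suffices to consider right-invariant functions, which via Proposition \ref{mnxmnb1} correspond bijectively and isometrically to functions on the appropriate Grassmannian, transferring both the $L^1$ and the $C^\infty$ mapping properties.
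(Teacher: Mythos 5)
Your proposal is correct and follows essentially the same two routes as the paper's own proof: the Grassmannian operators are treated exactly as in the paper via the composition formulas (\ref{rkfkfn}), (\ref{rkfkfndu}) and Lemma \ref{paqr1}, while for the Stiefel operators the paper's primary argument is the transfer through (\ref{0uudu}), (\ref{rakfkfndu}) using Remark \ref{kjutrgr} and Proposition \ref{muu1}, with the convolution-with-a-Radon-measure argument (your main route, via Propositions \ref{biid} and \ref{thmVNM}) stated there as the alternative for arbitrary $L^1$ or smooth functions. So you have merely interchanged which of the two routes is primary; both are sound and both appear in the paper.
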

\begin{proof}  The result for $R{}_{m,n-k}^{\!(j)}$ and $\stackrel{*}{R}{}_{m,n-k}^{\!(j)}$ follows immediately from the composition formulas (\ref {rkfkfn}) and  (\ref {rkfkfndu}) according to
Lemma \ref{paqr1}.  By (\ref {0uudu}) and (\ref {rakfkfndu}), these operators are expressed through  $F_{m,k}^{(j)}$ and $\stackrel{*}{F}{\!}_{\!m,k}^{\!(j)}$. Hence the result for $F_{m,k}^{(j)}$ and $\stackrel{*}{F}{\!}_{\!m,k}^{\!(j)}$  follows from Remark \ref{kjutrgr} and Proposition \ref{muu1}.
 Alternatively,  the result for arbitrary $L^1$ or smooth functions on the Stiefel  manifolds can be obtained if we represent our operators as convolutions   with Radon measures  on $O(n)$, as we did  in the proof of Lemma \ref{aqr1}.
\end{proof}

\begin{proposition} \label{n0mnby} Let  $1\le m\le k\le  n-1$, $\vp \in C^\infty (\vnk)$.
If $j\ge 0$ satisfies $ m-n \le j-k\le m-k-1$,  then
  \be\label{zn0xdr} \underset
{\lam=j-k}{a.c.} \,\frac{ (\cd0 \vp)(v)}{\gm((\lam +k)/2)}=c_j\,(\stackrel{*}{F}{}_{m,k}^{\!(j)} \vp)(v),\ee
$ c_j $  being the constant (\ref {cnnerm}).
\end{proposition}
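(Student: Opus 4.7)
The plan is to deduce (\ref{zn0xdr}) from the already-established identity (\ref{kfkfn}) for the direct $\lam$-cosine transform by means of the bilinear pairing between $\C^{\lam}_{m,k}$ and $\cd0$.  For $\Re\lam > m-k-1$, Lemma \ref{lhgn1} and Fubini yield
\[
\intl_{\vnk}\frac{(\C^{\lam}_{m,k} f)(u)}{\gm((\lam+k)/2)}\,\vp(u)\,d_*u \;=\; \intl_{\vnm} f(v)\,\frac{(\cd0 \vp)(v)}{\gm((\lam+k)/2)}\,d_*v
\]
for every test function $f\in C^\infty(\vnm)$, with the given $\vp\in C^\infty(\vnk)$.

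First I would check that each side extends to an \emph{entire} function of $\lam$.  The left side is entire by Theorem \ref{lxhgn1} combined with the joint $(\lam,u)$-regularity provided by Lemma \ref{jygc} (through the zeta-integral representation (\ref{45xxad})), and the right side is entire by Theorem \ref{lxhgnf}(iii), being jointly smooth in $(\lam,v)$ by (ii).  Uniqueness of analytic continuation then forces the identity to hold for every $\lam \in \bbc$.  I would then specialize to $\lam = j-k$, which is a regular point on both sides since the hypothesis $j\ge 0$ gives $j-k \ge -k > -k-1$.

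On the left, applying (\ref{kfkfn}) replaces the normalized $\lam$-cosine transform by $c_j (F_{m,k}^{(j)} f)(u)$, after which the duality (\ref{009adu}) of $F_{m,k}^{(j)}$ with $\stackrel{*}{F}{}_{m,k}^{\!(j)}$ rewrites the resulting integral as
\[
c_j\,\intl_{\vnm} f(v)\,(\stackrel{*}{F}{}_{m,k}^{\!(j)} \vp)(v)\,d_*v.
\]
Equating this with the right-hand side at $\lam = j-k$ and noting that the two functions of $v$ thereby paired against $f$ are continuous, in fact smooth (by Lemma \ref{jygcs} on one side and by Theorem \ref{lxhgnf}(ii) together with the right $O(m)$-invariance of $\cd0\vp$ on the other), the arbitrariness of $f\in C^\infty(\vnm)$ together with the density of smooth functions yields the pointwise identity (\ref{zn0xdr}).

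The only mildly delicate step is the justification that the $v$-integration commutes with the analytic continuation in $\lam$ on the right-hand side; but this requires no independent estimate, because the $u$-side bilinear form is \emph{a priori} entire, so the Fubini identity valid on the half-plane $\Re\lam > m-k-1$ forces the $v$-side to admit the same analytic continuation, entire and equal throughout $\bbc$.  An alternative, more laborious route would be to repeat the zeta-integral analysis of Theorem \ref{lxhgn1} directly on the $u$-integration inside $\cd0 \vp$; the duality argument above is much shorter and reuses the machinery already in place.
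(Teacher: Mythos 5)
Your argument is correct and is essentially the paper's own proof run in mirror image: the paper pairs the analytically continued dual transform against a test function $w\in C^\infty(\vnm)$, transfers to the direct transform by the Fubini duality, and then invokes (\ref{kfkfn}) and (\ref{009adu}) exactly as you do with your test function $f$. The only nitpick is that the right-hand side is not entire --- Theorem \ref{lxhgnf}(iii) permits poles at $-k-1,-k-2,\dots$ --- but, as you yourself note, $\lam=j-k>-k-1$ is a regular point, which is all the argument (and the paper's) actually uses.
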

\begin{proof} Denote
$$
A_\lam (v)\equiv\frac{ (\cd0 \vp)(v)}{\gm((\lam +k)/2))}=\frac{c (\lam)\, (\cd0 \vp)(v)}{\Gam_{n-k}((\lam+n-m)/2)}, \quad Re\, \lam >m-k-1.$$
By  Theorem \ref{lxhgnf} {\rm (ii)},
 this function extends   analytically to $Re\, \lam>-k-1$ and the analytic continuation  belongs to  $ C^\infty (\vnm)$. Clearly, $j-k >-k-1$, because $j\ge 0$. Hence,
 for any test function $w \in C^\infty (\vnm)$, owing to (\ref{kfkfn}) and (\ref{009adu}), we have
\bea
(\underset {\lam=j-k}{a.c.} \,A_\lam, w)&=& \underset {\lam=j-k}{a.c.} \,\left (\vp, \frac{\C^{\lam}_{m, k} w}{\gm((\lam +k)/2))}\right )\nonumber\\
&=&\left (\vp,  \underset {\lam=j-k}{a.c.} \,\frac{\C^{\lam}_{m, k} w}{\gm((\lam +k)/2))}\right )\nonumber\\
&=&c_j\,(\vp, F_{m, k}^{\!(j)} w)=c_j\,(\stackrel{*}{F}{}_{m,k}^{\!(j)} \vp,  w),\nonumber\eea
and (\ref{zn0xdr}) follows.

\end{proof}

\section{Normalized $\lam$-Cosine and $\lam$-Sine Transforms}\label{ncnfr}

\subsection { Normalized $\lam$-Cosine Transforms}

Let  $1\!\le\! m\le\! k\!\le\! n\!-\!1$. We introduce the following normalized modifications of the cosine transforms (\ref{0mby}) and  (\ref{0mbyd}):
\be \label{n0mby}(\Cs_{m,k}^\lam f)(u)=\gam_{m,k} (\lam)\intl_{\vnm} \!\!\!f(v)\,
|u'v|_m^\lam \, d_*v, \quad u\in \vnk,\ee
\be \label{n0mbyd}(\ncd0 \vp)(v)=\gam_{m,k} (\lam) \intl_{\vnk} \!\!\vp(u)\, |u'v|_m^\lam  \, d_*u, \quad v\in \vnm;\ee
\[\gam_{m,k} (\lam)=\frac{\Gam_m(m/2)}{\Gam_m(n/2)}\,\frac{\Gam_m(-\lam/2)}{\Gam_m((\lam +k)/2)}, \qquad \lam \neq 1-m,\, 2-m, \ldots \, .
\]
Such a normalization makes our operators consistent with those in the case $m=1$   (cf. \cite{Ru15, Ru20})
and simplifies many formulas in the sequel.  Excluded values of $\lam$ belong to the polar set  of  $\Gam_m(-\lam/2)$.
Both integrals exist in the Lebesgue sense if $\Re \lam>m-k-1$.
If $k=m$,  we set $\Cs_{m}^\lam f=\Cs_{m,m}^\lam f$.

\begin{theorem}\label {mnbqw} Let $f\in C^\infty (\vnm)$,  $1\le m\le k\le n-1$. \hfill

\vskip 0.2 truecm

\noindent {\rm (i)} The function  $\lam \to \Cs_{m,k}^\lam f$ extends  meromorphically  with the only poles  $\lam \!=\!1-m,\, 2-m, \ldots$. The extended function belongs to  $C^\infty (\vnk)^{O(k)}$.

\vskip 0.2 truecm

\noindent {\rm (ii)} If
\be\label {liddyz} m-n\le j-k \le \min (-m, m-k-1), \qquad  j\ge 0,\ee
 then
\be\label {kfskfn}
\underset {\lam = j-k}{a.c.} \,\Cs_{m,k}^\lam f = \tilde c_j\, F_{m,k}^{(j)} f,  \ee
\be\label {kfrskfn}
\tilde  c_j = \frac{\gm(m/2)\, \gm((k-j)/2)}{\gm(k/2)\, \gm((n-k+j)/2)}.\ee
In particular, for $j=0$,  $m+k\le n$,
\[\underset {\lam = -k}{a.c.} \,\Cs_{m,k}^\lam f = \tilde c_0\, F_{m,k} f,  \qquad \tilde  c_0 = \frac{\gm(m/2)}{\gm((n-k)/2)}.\]
\end{theorem}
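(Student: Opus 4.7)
The plan is to factor the normalized transform through the \emph{entire} normalized transform $I_f(\lam,u)$ already studied in Section~\ref{t0mby}. From the definition~(\ref{n0mby}),
\[
(\Cs_{m,k}^\lam f)(u) \;=\; \frac{\Gam_m(m/2)\,\Gam_m(-\lam/2)}{\Gam_m(n/2)}\cdot I_f(\lam,u), \qquad I_f(\lam,u)\equiv\frac{(\C^{\lam}_{m,k}f)(u)}{\gm((\lam+k)/2)}.
\]
By Lemma~\ref{jygc}, $I_f(\lam,u)$ is entire in $\lam$ and belongs to $C^\infty(\vnk)^{O(k)}$ in $u$ for every $\lam$ (the right $O(k)$-invariance being inherited from that of $\C^{\lam}_{m,k}f$, noted after~(\ref{0mbyd})). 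Hence the meromorphic structure of $\Cs_{m,k}^\lam f$ is governed entirely by the scalar $\Gam_m(-\lam/2)$.

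For part~(i), I would expand
\[
\Gam_m(-\lam/2) = \pi^{m(m-1)/4}\prod_{i=0}^{m-1}\Gam\!\left(-\lam/2 - i/2\right)
\]
via~(\ref{2.4}). Each factor $\Gam(-\lam/2 - i/2)$ has simple poles at $\lam = 2N - i$, $N = 0,1,2,\ldots$, and the union over $i = 0,\ldots,m-1$ is exactly $\{1-m,\,2-m,\,3-m,\,\ldots\}$, the claimed pole set. Smoothness of the analytic continuation in $u$ and its right $O(k)$-invariance transfer directly from the corresponding properties of $I_f(\lam,u)$.

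For part~(ii), substituting $\lam = j-k$ into the factorization yields
\[
\underset{\lam=j-k}{a.c.}(\Cs_{m,k}^\lam f)(u) \;=\; \frac{\Gam_m(m/2)\,\Gam_m((k-j)/2)}{\Gam_m(n/2)}\,I_f(j-k,u).
\]
The crucial point is that the hypothesis $j-k \le -m$, equivalently $k-j \ge m$, forces every argument $(k-j-i)/2$ with $0 \le i \le m-1$ to be strictly positive, so $\Gam_m((k-j)/2)$ is regular and nonzero at $\lam = j-k$. The remaining constraints $m-n \le j-k \le m-k-1$ with $j \ge 0$ are precisely those of Theorem~\ref{lxhgn1}, formula~(\ref{09kys7x}), which gives $I_f(j-k,u) = c_j\,(F_{m,k}^{(j)}f)(u)$ with $c_j = \Gam_m(n/2)/[\Gam_m(k/2)\Gam_m((n-k+j)/2)]$. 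Multiplying the two scalar factors produces exactly $\tilde c_j$ as stated; the special case $j=0$, $m+k \le n$ then follows by cancellation of $\Gam_m(k/2)$ between numerator and denominator.

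The main obstacle is not conceptual but is about sharpness: the inequality $j-k \le -m$ is exactly the threshold at which $\Gam_m((k-j)/2)$ first encounters its negative half-integer poles. For $j-k$ just above $-m$, a direct continuation would require either a residue computation or a different regularization, since the factorization above becomes indeterminate. Within the allowed range this issue disappears, and the proof reduces entirely to bookkeeping of gamma-function identities against the already-established entirety of $I_f(\lam,u)$.
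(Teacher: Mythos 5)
Your proposal is correct and follows essentially the same route as the paper: both factor $\Cs_{m,k}^\lam f$ as a gamma-quotient times the entire normalized integral $I_f(\lam,u)$ of Theorem \ref{lxhgn1}, use Lemma \ref{jygc} for smoothness, locate the poles in $\Gam_m(-\lam/2)$, and obtain (ii) from (\ref{kfkfn}) together with the observation that $j-k\le -m$ keeps $\lam=j-k$ off that polar set. Your write-up merely makes the gamma-function bookkeeping explicit where the paper's proof is terse.
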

\begin{proof} The statement (i) follows from Theorem \ref{lxhgn1} and Proposition \ref{jygc}. The restrictions  $ m-n\le j-k \le  m-k-1$, $j \ge 0$, are inherited from (\ref{kug5ez}). The inequality  $j-k \le  -m$ means that $\lam=j-k$ does not belong to the polar set $\{1-m,\, 2-m, \ldots\}$. The statement  (ii) follows from (\ref{kfkfn}).
\end{proof}

\begin{remark}\label {mniibqw} The inequality  $ m-n\le j-k \le - m$ in (\ref{liddyz}) implies $2m\le n$. The additional restriction $j-k \le - m$  is not imposed in the definition (\ref{Interm}). The case
\be\label {kwfskfn}
m-n \le 1-m \le j-k \le m-k-1,\ee
when  $F_{m,k}^{(j)} f$  is well defined, but the left-hand side of (\ref{kfskfn}) may be infinite, is not included in Theorem \ref{mnbqw}.
\end {remark}

\begin{conjecture} \label {m0opqw} In the case (\ref {kwfskfn}), the operator  $F_{m,k}^{(j)}$ is non-injective on $C^\infty (\vnm)^{O(m)}$.
\end{conjecture}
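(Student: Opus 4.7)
The plan is to pass to the Grassmannian side via the identification $C^\infty(\vnm)^{O(m)}\cong C^\infty(G_{n,m})$ (Proposition \ref{mnxmnb1}) and exploit the factorization (\ref{rkfkfn}),
\[
R_{m,n-k}^{(j)}=\stackrel{*}{R}_{n-k,\,n-k+j}\circ R_{m,\,n-k+j}.
\]
Since $(F_{m,k}^{(j)}f)(u)=(R_{m,n-k}^{(j)}\check f)(u^\perp)$ by (\ref{0uudu}), any $\check f$ annihilated by the \emph{inner} Radon transform $R_{m,\,n-k+j}:C^\infty(G_{n,m})\to C^\infty(G_{n,\,n-k+j})$ automatically lies in $\ker F_{m,k}^{(j)}$. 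It therefore suffices to show that $R_{m,\,n-k+j}$ fails to be injective in the conjecture regime.

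The hypothesis $1-m\le j-k$ rewrites as $m+(n-k+j)\ge n+1$, which is exactly the range in which the classical incidence Radon transform between real Grassmannians loses injectivity. By Grinberg's theorem (a real analogue of the Gelfand--Graev--Ro\c{s}u result), $R_{p,q}$ with $p\le q$ is injective on $C^\infty(G_{n,p})$ if and only if $p+q\le n$. Taking $p=m$, $q=n-k+j$ gives $p+q>n$, and the desired non-injectivity of $R_{m,\,n-k+j}$, hence of $F_{m,k}^{(j)}$, follows.

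A more constructive description proceeds via $O(n)$-harmonic analysis. Decomposing $C^\infty(G_{n,p})=\bigoplus_\lambda V_\lambda$, where $\lambda$ runs over partitions of length at most $\min(p,n-p)$ with parity conventions appropriate to the real orthogonal case, the $O(n)$-equivariance of $R_{p,q}$ together with Schur's lemma forces every $V_\lambda$ appearing in the source but absent from the target to lie in the kernel. Under the conjecture condition one checks $\min(m,n-m)>\min(n-k+j,k-j)$, so partitions of intermediate length exist and produce honest kernel elements; pulled back through $\vnm\to G_{n,m}$ they are right $O(m)$-invariant, witnessing the failure of injectivity on $C^\infty(\vnm)^{O(m)}$.

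The main obstacle is the branching step: giving the precise $O(n)$-decomposition of $C^\infty(G_{n,p})$ with the correct parity restrictions and verifying that partitions of the required intermediate length genuinely occur (rather than being merely admissible by the crude length bound), with additional care in the borderline cases $m>n/2$ or $k-j>n/2$, which one handles using the duality $G_{n,p}\cong G_{n,n-p}$. A self-contained route, closer in spirit to the methods of the present paper, would instead compute the scalar by which $\C^\lam_{m,k}$ acts on each $O(n)$-irreducible using the Fourier/zeta-integral machinery of Section~\ref{t0mby}, and exhibit, at $\lam=j-k$, an irreducible on which this scalar vanishes; establishing this explicit vanishing is where the bulk of the technical work would lie.
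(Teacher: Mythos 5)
First, a point of reference: the statement you are proving is Conjecture \ref{m0opqw}, which the paper itself leaves open (see Problems 5 and 6 in Section \ref{kjb8j}); there is no proof in the paper to compare against, and even the prototype fact your argument rests on --- non-injectivity of $R_{p,q}$ for $p+q>n$ --- is cited there only as an unpublished outline communicated by Kakehi in group-representation terms. So your proposal must stand as a self-contained argument, and as such it has two gaps.

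The first gap is that the decisive inputs are never supplied. The ``only if'' half of the criterion you attribute to Grinberg (injectivity of $R_{p,q}$, $p\le q$, exactly when $p+q\le n$) is precisely the non-injectivity statement at issue and is not a quotable theorem in the sources you name; your fallback --- the precise $O(n)$-decomposition of $C^\infty(G_{n,p})$ with parity restrictions, the genuine occurrence of partitions of length exceeding $\min(q,n-q)$, or alternatively the vanishing of the spectral scalar of $\Cs^{\lam}_{m,k}$ at $\lam=j-k$ --- is exactly what you defer, so the proposal is a program rather than a proof. The second gap is concrete and more serious: your rank inequality $\min(m,n-m)>\min(n-k+j,k-j)$ fails on part of the range (\ref{kwfskfn}). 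That condition forces $2m\le n+1$ and $k-j\le m-1$, and in the boundary case $2m=n+1$, $j-k=1-m$ both ranks equal $m-1$; moreover $n-k+j=m$ there, so the inner factor $R_{m,n-k+j}$ in (\ref{rkfkfn}) is the identity and your reduction yields nothing. Worse, in the sub-case $k=m$, $j=1$, $n=2m-1$ (allowed by (\ref{cerm}) and (\ref{kwfskfn})) the operator reduces via (\ref{0uudu}) and orthocomplementation to $R_{m-1,m}$ with $(m-1)+m=n$, which is injective by the very criterion you invoke; for $n=3$, $m=k=2$, $j=1$ it is literally $\stackrel{*}{R}_{1,2}$ (averaging over all $2$-planes containing a given line), i.e.\ the Funk transform on the projective plane $G_{3,2}\cong G_{3,1}$, injective on $C^\infty(\vnm)^{O(m)}$. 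Thus your method cannot cover the whole range of the conjecture, and this boundary family indicates that the conjecture as literally stated needs that case excluded. For the interior cases $j-k>m-n$ (and for the boundary cases with $k>m$, where one can apply the equivariance argument to the composed operator instead) your rank count is correct and the Schur-lemma skeleton is indeed the standard route --- essentially the one the paper attributes to Kakehi --- but the branching or spectral computations still have to be carried out before this counts as a proof.
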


For the dual transform $\stackrel{*}{\Cs}\!{}_{m, k}^{\lam} \vp$, the following result is a consequence of Theorem \ref{lxhgnf} and Proposition \ref {n0mnby}.

\begin{theorem}\label {mnbqwo} Let $\vp\in C^\infty (\vnk)$,
\[1\le m\le k\le  n-1, \qquad  n-k+j \ge m, \qquad 0\le j \le m-1.\]

\noindent {\rm (i)} The function  $\lam \to \stackrel{*}{\Cs}\!{}_{m, k}^{\lam} \vp$ extends  meromorphically  with the only poles
\[\lam \in \{-k-1, -k-2, \ldots \} \cup \{ 1-m,\, 2-m, \ldots  \}.\]
The extended function belongs to  $C^\infty (\vnm)^{O(m)}$.

\vskip 0.2 truecm

\noindent {\rm (ii)}  If, moreover,   $j-k \le \min (-m, m-k-1)$, then
\be\label{zn0xdro} \underset
{\lam=j-k}{a.c.} \, \stackrel{*}{\Cs}\!{}_{m, k}^{\lam} \vp=\tilde c_j\,\stackrel{*}{F}{}_{m,k}^{\!(j)} \vp,\ee
where $\tilde c_j$ is defined by (\ref {kfrskfn}).
\noindent In particular, for $j=0$,  $m+k\le n$,
\[\underset {\lam = -k}{a.c.} \, \stackrel{*}{\Cs}\!{}_{m, k}^{\lam} \vp = \tilde c_0\, \fd \vp,  \qquad \tilde  c_0 = \frac{\gm(m/2)}{\gm((n-k)/2)}.\]
\end{theorem}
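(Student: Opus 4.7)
The plan is to deduce both assertions directly from Theorem~\ref{lxhgnf} and Proposition~\ref{n0mnby} by unwinding the normalization factor $\gam_{m,k}(\lam)$ and reading off its pole/zero pattern.

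First I would write
\[
\ncd0 \vp \;=\; \frac{\gm(m/2)}{\gm(n/2)}\,\gm(-\lam/2)\cdot\frac{\cd0\vp}{\gm((\lam+k)/2)},
\]
so that part~(iii) of Theorem~\ref{lxhgnf} applies to the last fraction: it extends to a meromorphic function of $\lam$ with possible poles only at $\lam=-k-1,-k-2,\dots$. The factor $\gm(-\lam/2)$ is meromorphic with polar set $\{1-m,2-m,\dots\}$ obtained by inspecting the product formula~(\ref{2.4}). Multiplying these two together and by the $\lam$-independent constant $\gm(m/2)/\gm(n/2)$ gives the required polar set in~(i). For the $v$-dependence, observe that once $\lam$ is fixed outside the pole set, the quotient $\cd0\vp/\gm((\lam+k)/2)$ differs from the entire-in-$\lam$ and smooth-in-$v$ normalization $\cd0\vp/\Gamma_{n-k}((\lam+n-m)/2)$ of part~(ii) only by the scalar factor $c(\lam)$ appearing in the proof of~(iii). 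Hence the extension of $\ncd0\vp$ belongs to $C^\infty(\vnm)$ pointwise in $\lam$, and its right $O(m)$-invariance is preserved from~(\ref{iou6v1}).

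For part~(ii), I would substitute Proposition~\ref{n0mnby} at $\lam=j-k$ into the identity above. Since $j-k\le -m<-k-1$ is excluded by the hypothesis $j-k\le -m$ (recall $m\ge 1$), the first factor $\cd0\vp/\gm((\lam+k)/2)$ is analytic at $j-k$ and equals $c_j\,\stackrel{*}{F}{}_{m,k}^{\!(j)}\vp$ by (\ref{zn0xdr}). The remaining scalar factor becomes
\[
\frac{\gm(m/2)}{\gm(n/2)}\,\gm\bigl((k-j)/2\bigr)\cdot c_j
\;=\; \frac{\gm(m/2)\,\gm((k-j)/2)}{\gm(k/2)\,\gm((n-k+j)/2)}\;=\;\tilde c_j,
\]
using the explicit formula~(\ref{cnnerm}) for $c_j$. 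The inequality $j-k\le -m$ is exactly what keeps $\gm((k-j)/2)=\gm(-(j-k)/2)$ out of the polar set of the Siegel gamma function, so $\tilde c_j$ is finite. Combining gives~(\ref{zn0xdro}); the special case $j=0$ when $m+k\le n$ uses additionally $\gm(n/2)=\gm(k/2)\cdot\gm((n-k)/2)\cdot[\gm(n/2)/(\gm(k/2)\gm((n-k)/2))]$ cancellation to reach the stated $\tilde c_0$.

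The only mildly subtle step is the bookkeeping of poles: I must check that the $\gm(-\lam/2)$-contribution $\{1-m,2-m,\dots\}$ is not cancelled by compensating zeros coming from $\cd0\vp$, so the poles are genuine (``only possible''), and that the two polar sets may overlap without causing higher-order poles beyond what is stated. Everything else is purely a consequence of the already-established meromorphic continuation results, so I expect no substantive new work beyond the algebraic rearrangement above.
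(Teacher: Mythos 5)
Your proposal is correct and follows essentially the same route as the paper, whose proof of this theorem is precisely to unwind the normalizing factor $\gam_{m,k}(\lam)$ and combine Theorem \ref{lxhgnf} (for the meromorphic continuation, the polar set, and smoothness in $v$) with Proposition \ref{n0mnby} (for the value at $\lam=j-k$), yielding the constant $\tilde c_j$ exactly as you compute it. Two cosmetic slips only: the analyticity of $\cd0\vp/\gm((\lam+k)/2)$ at $\lam=j-k$ follows from $j\ge 0$, hence $j-k>-k-1$ (your inequality $-m<-k-1$ is false for $m\le k$, and in any case Proposition \ref{n0mnby} already asserts the needed evaluation), and the right $O(m)$-invariance of the extended function comes from $|u'(v\gam)|_m=|u'v|_m$ together with uniqueness of analytic continuation, not from (\ref{iou6v1}).
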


\subsection {Normalized $\lam$-Sine Transforms} \label{ty0mby}

The {\it normalized $\lam$-sine transform} is defined by
\be\label{tag1.10aach5stn}
(\S_m^\lam f)(u)\!=\!  \del_m (\lam) \!\intl_{\vnm} \!\!\det (I_m\!-\! v'uu'v)^{\lam/2} f(v) \,d_*v , \quad u \in \vnm,\ee
\[
 \del_m (\lam)=\frac{\Gam_m(m/2)}{\Gam_m(n/2)}\,\frac{\Gam_m(-\lam/2)}{\Gam_m((\lam +n-m)/2)}, \quad 2m\le n; \quad \lam+m\neq 1,2, \ldots\,.\]
 More general sine transforms acting from $\vnm$ to $\vnk$ were introduced in \cite [Sections 4,6]{Ru13}.   If $f \in L^1 (\vnm)$, the integral  (\ref{tag1.10aach5stn}) is absolutely convergent provided $Re \,\lam > 2m-1-n$.\footnote{Here and on, when dealing with the normalized $\lam$-sine transforms, we use the formula (6.6) from \cite{Ru13}, in which $\a+m-n$ should be replaced by $\lam$. Similarly, when dealing with the normalized $\lam$-cosine transforms, we use formulas (6.1)-(6.2) from \cite{Ru13} in which $\a-k$ is replaced by $\lam$. }

 Note that
 \be\label{tyy51so}
 \intl_{\vnm} \!\!\det (I_m\!-\! v'uu'v)^{\lam/2}  \,d_*v=\frac{\Gam_m(n/2)\, \Gam_m((\lam +n-m)/2)}{\Gam_m((n-m)/2)\,\Gam_m((\lam +n)/2)}\ee
  (cf. \cite [Remark 4.4]{Ru13}). This formula shows that the restriction $Re \,\lam > 2m-1-n$ is sharp.

 The function $\S_m^\lam f$ is right $O(m)$-invariant and
 \[\S_m^\lam f = \S_m^\lam f_{ave}, \qquad f_{ave} (v)=\intl_{O(m)} f (v\gam)\, d_*\gam,\]
for every  $f \in L^1 (\vnm)$.
 Hence, in many occurrences, when dealing with $\S_m^\lam f$, it suffices to assume $f$ to be  right $O(m)$-invariant.

 If $\Pr_{\{u\}}$ and $\Pr_{\{u\}^\perp }$ stand for the orthogonal projections onto the  subspaces  $\{u\}$ and $\{u\}^\perp$, then
 \bea
\det (I_m\!-\! v'uu'v)&=&\det (I_m -v'\Pr_{\{u\}}v)=\det (v' \Pr_{\{u\}^\perp } v)\nonumber\\
&=&\det (v' \tilde u \tilde u' v), \nonumber\eea
 where $\tilde u $ is an  $(n-m)$-frame  orthogonal to $ \{u\}$. It follows that
 \be\label{atyy51s}
(\S_{m}^\lam f)(u)= (\Cs_{m,n-m}^\lam f)(\tilde u), \qquad 2m\le n. \ee
  The assumption $2m\le n$ is natural because otherwise,
 \[\det (I_m\!-\! v'uu'v)=\det (v' \tilde u \tilde u' v)=0.\]

The equality (\ref{atyy51s}) combined with  Proposition \ref{mnxmnb1} and Theorem \ref{mnbqw} yields the following result (see \cite [Theorem 7.2]{Ru13} for details).
\begin{lemma} \label{lcjr1}  If  $f\in C^\infty (\vnm)^{O(m)}$, then for each  $u\in \vnm$, $(\S_{m}^\lam f)(u)$ extends meromorphically to all complex $\lam$ with the only poles
\[\lam = 1-m, \,2-m,  \ldots\, ,\]
 so that $a.c. \,(\S_{m}^\lam f)(u) \in C^\infty (\vnm)^{O(m)}$ in the $u$-variable.
 Moreover,
\be\label{tyy5hh}
\underset
{\lam=m-n}{a.c.} \,(\S_{m}^\lam f)(u) =f(u), \qquad 2m \le n.\ee
\end{lemma}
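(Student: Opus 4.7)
The plan is to reduce everything to the cosine transform side via the identity (\ref{atyy51s}) and then invoke Theorem \ref{mnbqw}. First I would verify that the normalizing factor $\del_m(\lam)$ in (\ref{tag1.10aach5stn}) coincides exactly with $\gam_{m,n-m}(\lam)$ from (\ref{n0mby}); both equal $\Gam_m(m/2)\Gam_m(-\lam/2)/[\Gam_m(n/2)\Gam_m((\lam+n-m)/2)]$. This, combined with the determinant identity
\[
\det(I_m - v'uu'v) = \det(v'\tilde u\tilde u' v) = |\tilde u' v|_{n-m}^{2},
\]
(valid because $\tilde u$ is an $(n\!-\!m)$-frame orthogonal to $\{u\}$ and $2m\le n$) yields
\[
(\S_m^\lam f)(u) = (\Cs_{m,n-m}^\lam f)(\tilde u).
\]

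Next I would apply Theorem \ref{mnbqw}(i) with $k=n-m$: the right-hand side extends meromorphically in $\lam$ with the only possible poles at $\lam=1-m,2-m,\ldots$, and the a.c. belongs to $C^\infty(V_{n,n-m})^{O(n-m)}$ in the $\tilde u$-variable. Transferring back to the $u$-variable by the bijection $f\leftrightarrow f_*$ of Proposition \ref{mnxmnb1} (between right $O(m)$-invariant functions on $\vnm$ and right $O(n-m)$-invariant functions on $V_{n,n-m}$) shows that $a.c.\,(\S_m^\lam f)(u)\in C^\infty(\vnm)^{O(m)}$ with the same polar set. This handles the meromorphic/smoothness part.

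For the evaluation at $\lam=m-n$, set $k=n-m$ and $j=0$ in Theorem \ref{mnbqw}(ii); then $j-k=m-n$ and the admissibility condition (\ref{liddyz}) reads $m-n\le m-n\le\min(-m, m-k-1)$, which holds precisely under the hypothesis $2m\le n$. The constant becomes
\[
\tilde c_0=\frac{\gm(m/2)}{\gm((n-k)/2)}=\frac{\gm(m/2)}{\gm(m/2)}=1,
\]
so $\underset{\lam=m-n}{a.c.}\,\Cs_{m,n-m}^\lam f = F_{m,n-m}\,f$.

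The one remaining point—the only mildly non-routine step—is to identify $(F_{m,n-m}f)(\tilde u)$ with $f(u)$. The Funk transform $F_{m,n-m}$ integrates over $m$-frames $v\in\vnm$ satisfying $\tilde u' v=0$; this constraint forces $\{v\}\subset\{u\}$, so $v=u\a$ for some $\a\in O(m)$. Hence
\[
(F_{m,n-m}f)(\tilde u)=\intl_{O(m)} f(u\a)\,d_*\a = f_{ave}(u)= f(u),
\]
the last equality by the assumed right $O(m)$-invariance of $f$. Combining with (\ref{atyy51s}) gives (\ref{tyy5hh}) and completes the argument.
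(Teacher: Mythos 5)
Your argument is correct and follows essentially the same route as the paper, which obtains the lemma precisely by combining (\ref{atyy51s}) with Proposition \ref{mnxmnb1} and Theorem \ref{mnbqw} (with $k=n-m$, $j=0$, so that $\tilde c_0=1$ and the Funk transform $F_{m,n-m}$ reduces to the $O(m)$-average $f_{ave}=f$); you merely spell out the details that the paper delegates to \cite[Theorem 7.2]{Ru13}. One small notational slip: since $\tilde u'v$ has $m$ columns, the determinant identity should read $\det(I_m-v'uu'v)=\det(v'\tilde u\tilde u'v)=|\tilde u'v|_m^{2}$, not $|\tilde u'v|_{n-m}^{2}$; this does not affect the rest of the argument.
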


The following statement establishes remarkable connection between the sine transforms,  cosine transforms, and  Funk transforms.

\begin{lemma} \label{lcjrw} {\rm (}cf. \cite [Theorems 4.5, 4.8]{Ru13}{\rm )} Let  $1\le m\le k \le n-m$, $f\in L^1 (\vnm)$. If
\[Re \,\lam > 2m-1-n, \qquad \lam\neq 1-m, \,2-m, \ldots\, ,\]
  then
\be\label{ktyy51}
\S_{m}^\lam f\!=\!\tilde \del  \ncd0 F_{m,k} f\!=\!\tilde \del   \fd\Cs_{m,k}^\lam f,  \quad \tilde \del =\frac{\Gam_{m} (k/2)}{\Gam_{m} ((n\!-\!m)/2)}.\ee
In particular, if  $\, 2m \le n-k$, then
\be\label{tyy51b}
\S_{m}^{-k} f\!=\!  \del_0   \fd F_{m,k} f, \quad \del_0 \!=\!\frac{\Gam_{m}(k/2)\, \Gam_{m}
(m/2)}{\Gam_{m}((n\!-\!k)/2)\, \Gam_{m}((n\!-\!m)/2)}.\ee
 \end{lemma}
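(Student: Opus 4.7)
My plan is to prove both identities by unfolding the right-hand sides as iterated integrals over a pair of Stiefel variables, interchanging the order of integration via the duality of Lemma \ref{aqr1}, and then evaluating the inner integral via a polar decomposition argument. The key algebraic observation is that for $u\in\vnm$, letting $\tilde u\in V_{n,n-m}$ be any frame orthogonal to $u$, one has
\begin{equation*}
\det(I_m-v'uu'v)=\det(v'\tilde u\tilde u'v)=|\tilde u'v|_m^2,
\end{equation*}
so the sine kernel is precisely a cosine kernel between $v$ and the orthogonal complement frame, which is exactly the structure generated by passing through the Funk transform.

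Concretely, I would unfold
\begin{equation*}
(\fd\,\Cs_{m,k}^\lam f)(u)=\gam_{m,k}(\lam)\intl_{\{w\in\vnk:\,w'u=0\}}\intl_{\vnm} f(v)\,|w'v|_m^\lam\,d_*v\,d_u w,
\end{equation*}
swap so that the $v$-integration is on the outside, and parametrize the inner integration by $w=\tilde u\eta$ with $\eta\in V_{n-m,k}$. Then $w'v=\eta'A$ with $A=\tilde u'v$, and $A'A=v'\tilde u\tilde u'v=I_m-v'uu'v$. The polar decomposition $A=P\rho^{1/2}$, $\rho=A'A$, $P\in V_{n-m,m}$, factors the kernel as $|w'v|_m^\lam=\det(\rho)^{\lam/2}\,|\eta'P|_m^\lam$. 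The remaining integral $\intl_{V_{n-m,k}}|\eta'P|_m^\lam\,d_*\eta$ is $O(n-m)$-invariant in $P$, hence a constant $C$, which I would compute via Proposition \ref{mnxmnb1} (passing to the orthogonal complement frames in $\bbr^{n-m}$) combined with (\ref{ddsil}) in ambient dimension $n-m$, yielding
\begin{equation*}
C=\frac{\Gam_m((n-m)/2)\,\Gam_m((\lam+k)/2)}{\Gam_m(k/2)\,\Gam_m((\lam+n-m)/2)}.
\end{equation*}
Collecting constants and recognizing the surviving $v$-integral as $(\S_m^\lam f)(u)/\del_m(\lam)$ gives $\S_m^\lam f=\tilde\del\,\fd\,\Cs_{m,k}^\lam f$ with coefficient $\tilde\del=\del_m(\lam)/(\gam_{m,k}(\lam)\,C)$, which after cancellation of the shared $\Gam_m(m/2)/\Gam_m(n/2)$ and $\Gam_m(-\lam/2)$ factors collapses to $\tilde\del=\Gam_m(k/2)/\Gam_m((n-m)/2)$. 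The companion identity $\S_m^\lam f=\tilde\del\,\ncd0 F_{m,k}f$ follows from the same Fubini/polar-decomposition argument with the roles of outer and inner variable reversed; the inner integral is structurally identical, so the constant matches.

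For (\ref{tyy51b}), the hypothesis $2m\le n-k$ ensures $-k>2m-1-n$, placing $\lam=-k$ inside the region of validity of the first identity (and one checks $-k\notin\{1-m,2-m,\ldots\}$ since $k\ge m$). Theorem \ref{mnbqw}(ii) applied at $j=0$ gives $\Cs_{m,k}^{-k}f=\tilde c_0\,F_{m,k}f$ with $\tilde c_0=\Gam_m(m/2)/\Gam_m((n-k)/2)$; substituting into the first identity at $\lam=-k$ produces $\S_m^{-k}f=\tilde\del\,\tilde c_0\,\fd F_{m,k}f$, and $\tilde\del\,\tilde c_0$ is the stated $\del_0$. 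I expect the main obstacle to be the evaluation of the ``reversed'' Stiefel integral $C$, since (\ref{ddsil}) is stated only for integration over the smaller-rank frame; the cleanest resolution is to apply Proposition \ref{mnxmnb1} together with the Sylvester identity $|\eta'P|_m=|\tilde\eta'\tilde P|_{n-m-k}$ (the same device that underlies Proposition \ref{mnxmnb}), reducing $C$ to the standard form of (\ref{ddsil}) in dimension $n-m$.
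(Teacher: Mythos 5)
Your argument is correct and is essentially the paper's own proof: the paper handles (\ref{ktyy51}) by exactly this Fubini interchange, the factorization $|w'v|_m^\lam=\det(I_m-v'uu'v)^{\lam/2}\,|\eta'P|_m^\lam$ coming from polar decomposition of $\tilde u'v$, and the evaluation of the inner Stiefel integral by the beta-type formula (\ref{ddsil}) in ambient dimension $n-m$ (note that the group-integral form $\int_G h=\int_G h^*$ already built into (\ref{ddsil}) yields your constant $C$ directly, so the Sylvester/complement detour you anticipate is unnecessary), and your constant bookkeeping indeed collapses to $\tilde\del=\Gam_m(k/2)/\Gam_m((n-m)/2)$. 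Your derivation of (\ref{tyy51b}) by specializing to $\lam=-k$ via Theorem \ref{mnbqw}(ii) is likewise the paper's pattern (cf. Corollary \ref{pvvlz}); just note that this step invokes analytic continuation, hence literally requires $f\in C^\infty$, and the stated $L^1$ case then follows by density since both sides are compositions of operators bounded on $L^1$ --- a point the paper also leaves implicit in its appeal to Fubini.
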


The proof of these formulas is actually an application of Fubini's theorem.

Lemma \ref{lcjr1} implies the following

\begin{corollary} \label {mcmndfi} If $f \in C^\infty (\vnm)^{O(m)}$, $1\le m\le k\le n-m$, then (\ref{ktyy51}) extends to all
 complex $ \lam\neq 1-m, \,2-m, \ldots\,$. In particular, analytic continuations of $\ncd0 F_{m,k} f$ and  $\fd\Cs_{m,k}^\lam f$ belong to $ C^\infty (\vnm)^{O(m)}$.
\end{corollary}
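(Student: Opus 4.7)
The strategy is to extend the identity of Lemma~\ref{lcjrw} beyond its convergence half-plane $\Re\lam>2m-1-n$ by combining the already-established meromorphic extensions of each of the three expressions with uniqueness of analytic continuation. I would not attempt to compute any new integrals; the point is that the identity on a non-empty open set, together with the $C^\infty$-valued meromorphic extensions available from earlier results, pins everything down.

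First I would recall that by Lemma~\ref{lcjr1} the leftmost member $\S_m^\lam f$ admits a $C^\infty(\vnm)^{O(m)}$-valued meromorphic continuation to all $\lam\in\bbc$ whose only possible poles lie in $\{1-m,2-m,\dots\}$. This serves as the reference object, because its pole set is the cleanest of the three. Next, to handle $\tilde\del\,\fd\Cs_{m,k}^\lam f$, I would invoke Theorem~\ref{mnbqw}(i) to extend $\lam\mapsto\Cs_{m,k}^\lam f$ as a $C^\infty(\vnk)^{O(k)}$-valued meromorphic function with poles only in $\{1-m,2-m,\dots\}$; Lemma~\ref{aqr1} (applied to the dual Funk transform) then shows that $\fd$ carries smooth functions to smooth functions, so the composition $\fd\Cs_{m,k}^\lam f$ inherits a $C^\infty(\vnm)^{O(m)}$-valued meromorphic extension with the same potential poles. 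Since this extension agrees with $\S_m^\lam f/\tilde\del$ on $\Re\lam>2m-1-n$, uniqueness of analytic continuation propagates the identity to every $\lam\neq 1-m,2-m,\dots$.

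The case $\tilde\del\,\ncd0 F_{m,k}f$ is slightly subtler, because the dual cosine transform brings in an extra batch of potential poles. I would first note that by Lemma~\ref{aqr1} we have $F_{m,k}f\in C^\infty(\vnk)^{O(k)}$, which allows Theorem~\ref{mnbqwo}(i) to apply and yield a $C^\infty(\vnm)^{O(m)}$-valued meromorphic extension of $\ncd0 F_{m,k}f$ whose a priori pole set is $\{-k-1,-k-2,\dots\}\cup\{1-m,2-m,\dots\}$. Uniqueness of analytic continuation, applied to the identity on the half-plane of convergence, then forces this extension to coincide with $\S_m^\lam f/\tilde\del$ wherever both are defined. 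In particular, the candidate poles at $-k-1,-k-2,\dots$ are automatically removable, since by Step~1 the reference object $\S_m^\lam f$ has no poles there. The smoothness in $v$ and the right $O(m)$-invariance of both extensions then follow from the corresponding properties of $\S_m^\lam f$ recorded in Lemma~\ref{lcjr1}.

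The main obstacle I anticipate is purely a bookkeeping one: keeping track of the three (slightly different) pole sets and verifying that the ``extra'' poles at $-k-1,-k-2,\dots$ that Theorem~\ref{mnbqwo} allows for $\ncd0$ do not actually survive the precomposition with $F_{m,k}$. Rather than residue computations, the argument relies on the fact that the left-hand side $\S_m^\lam f$ has the smaller pole set, and then uniqueness of analytic continuation does all the work.
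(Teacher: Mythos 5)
Your proposal is correct and follows essentially the same route as the paper: the corollary is deduced from Lemma \ref{lcjr1} (the $C^\infty$-valued continuation of $\S_m^\lam f$ with poles only at $1-m,2-m,\dots$) together with uniqueness of analytic continuation applied to (\ref{ktyy51}), with the a priori continuations of $\Cs_{m,k}^\lam f$ and $\stackrel{*}{\Cs}\!{}_{m,k}^\lam\vp$ supplied by Theorems \ref{mnbqw}(i) and \ref{mnbqwo}(i) and the $C^\infty\to C^\infty$ mapping properties of $F_{m,k}$ and $\fd$ from Lemma \ref{aqr1}. Your observation that the extra candidate poles at $-k-1,-k-2,\dots$ are removable because the continuation agrees near those points with the pole-free $\S_m^\lam f/\tilde\del$ is exactly the bookkeeping the paper leaves implicit.
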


The next result extends (\ref{tyy51b}) to intermediate Funk-cosine transforms.
\begin{corollary} \label {pvvlz}  If $f \in C^\infty (\vnm)^{O(m)}$, $1\le m\le k\le n-m$,
\be\label {liddyzs} m-n\le j-k \le \min (-m, m-k-1), \qquad  j\ge 0,\ee
then
\be\label{tyy51}
\S_{m}^{j-k} f\!=\!  \del_j \stackrel{*}{F}{}_{m,k}^{\!(j)} F_{m,k} f= \del_j   \fd F_{m,k}^{(j)} f, \ee
\[
  \del_j = \frac{\gm((k-j)/2)\, \gm(m/2)}{\gm((n-k+j)/2)\,\Gam_{m}((n\!-\!m)/2)}\, .\]
\end{corollary}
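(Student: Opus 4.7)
The plan is to analytically continue both sides of identity (\ref{ktyy51}) from Lemma \ref{lcjrw} at $\lam = j-k$. By Corollary \ref{mcmndfi} the identity
\[
\S_m^\lam f = \tilde\del \, \ncd0 F_{m,k} f = \tilde\del \, \fd \Cs_{m,k}^\lam f
\]
extends meromorphically to all complex $\lam \neq 1-m, 2-m, \ldots$, so the first task is to check that $\lam = j-k$ avoids this polar set: this is exactly the hypothesis $j-k \le -m$ in (\ref{liddyzs}), which gives $j-k \le -m < 1-m$.

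Next I would evaluate each of the two expressions on the right-hand side at $\lam = j-k$. For the term $\ncd0 F_{m,k} f$, first note that $F_{m,k}f \in C^\infty(\vnk)^{O(k)}$ by Lemma \ref{aqr1}. The hypotheses of Theorem \ref{mnbqwo}(ii) are all verified under our assumptions ($n-k+j \ge m$ from $m-n \le j-k$, and $j-k \le \min(-m,m-k-1)$ is explicit). Applying that theorem with $\vp = F_{m,k}f$ yields
\[
\underset{\lam = j-k}{a.c.}\, \ncd0 F_{m,k} f = \tilde c_j\, \stackrel{*}{F}{}_{m,k}^{\!(j)} F_{m,k} f.
\]
For the other expression $\fd \Cs_{m,k}^\lam f$, I would argue that the fixed continuous linear operator $\fd : C^\infty(\vnk) \to C^\infty(\vnm)$ commutes with analytic continuation in $\lam$: indeed, by Theorem \ref{mnbqw}(i), the map $\lam \mapsto \Cs_{m,k}^\lam f$ is a meromorphic family in $C^\infty(\vnk)^{O(k)}$ with values smooth jointly in $(\lam,u)$ off the poles, and $\fd$ is given by a fixed Radon measure on $O(n)$, so one can differentiate (or take a.c.) under the integral as in Proposition \ref{biid}. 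Combined with Theorem \ref{mnbqw}(ii) this gives
\[
\underset{\lam = j-k}{a.c.}\, \fd \Cs_{m,k}^\lam f = \fd \left( \tilde c_j F_{m,k}^{(j)} f \right) = \tilde c_j\, \fd F_{m,k}^{(j)} f.
\]

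Combining the three pieces, $\S_m^{j-k} f = \tilde\del\, \tilde c_j\, \stackrel{*}{F}{}_{m,k}^{\!(j)} F_{m,k} f = \tilde\del\, \tilde c_j\, \fd F_{m,k}^{(j)} f$, and it only remains to verify $\tilde\del\, \tilde c_j = \del_j$. Substituting $\tilde\del = \Gam_m(k/2)/\Gam_m((n-m)/2)$ and $\tilde c_j = \gm(m/2)\gm((k-j)/2)/[\gm(k/2)\gm((n-k+j)/2)]$, the factor $\gm(k/2)$ cancels and we recover the constant in the statement. The only genuinely nontrivial point in the argument is the legitimacy of interchanging analytic continuation with $\fd$ (and similarly of recognizing that $F_{m,k}f$ is a smooth test function for Theorem \ref{mnbqwo}), so that step is the one I would write out carefully; the rest is bookkeeping.
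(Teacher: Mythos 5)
Your argument is correct and follows essentially the same route as the paper: the paper derives the corollary directly from (\ref{kfskfn}), (\ref{zn0xdro}), and the analytically continued identity (\ref{ktyy51}) (via Corollary \ref{mcmndfi}), which is exactly what you do, including the constant check $\tilde\del\,\tilde c_j=\del_j$. Your extra care about interchanging analytic continuation with $\fd$ and about $F_{m,k}f$ being an admissible smooth test function only spells out details the paper leaves implicit.
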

 This statement  follows from (\ref{kfskfn}), (\ref{zn0xdro}), and (\ref{ktyy51}).  The assumption (\ref{liddyzs}) mimics  those in Theorems \ref{mnbqw} and \ref{mnbqwo}.

\section{The  Fourier Transform and Differential Operators}


 Now, after we are done with all preparations,  we can proceed to the main topic of the paper.
  We first introduce an auxiliary
  integral operator
\be \label {akm}(A_{k,m} \vp)(v)\!=\!\intl
_{V_{n-m, k-m}}\!\!\!\!\!\vp \left (g_v \left[\begin{array} {cc} a & 0 \\  0 & I_{m} \end{array} \right]\right )\, d_*a,  \qquad v\!\in \!\vnm,
\ee
where $1\le m\le k\le n-1$,    $g_v \left[\begin{array} {c}  0 \\  I_{m} \end{array} \right]=v$, $ g_v\in O(n)$.
   If $k=m$,  $A_{k,m}$ is the identity operator. One can show \cite [Lemma 5.2]{Ru13} that $A_{k,m}$ is a linear bounded operator from $L^1 (\vnk)$ to $L^1 (\vnm)$.
Given a function $f$  on $\vnm$, using polar decomposition (\ref{pol}), we set
\be\label {lamm} (E_\lam f)(x)=|x|_m^\lam f(x (x'x)^{-1/2}), \qquad x\in \tilde\frM_{n,m}.\ee

\begin{theorem}\label{th:33} \cite [Corollary 5.5]{Ru13}
Let $\,\vp\in L^1 (\vnk)^{O(k)}$,  $\om\in
 S(\Ma)$, $1\!\le\! m\! \le \!k\le\! n\!-\!1$. Then for every $ \lam\in\bbc$,
 \be\label{eq5ya}
 \left(\frac{E_{\lam} \cd0 \vp}{\Gam_m ((\lam +k)/2)}, \hat \om\right)=c \, \left (\frac{E_{-\lam-n} A_{k,m} \vp}{\Gam_m (-\lam/2)},  \om\right),\ee
\[ c =\frac{2^{m(n+\lam)}\, \pi^{nm/2}\,\Gam_m (n/2)}{\Gam_m (m/2)},\]
where both sides   are understood in the sense of analytic continuation.
\end{theorem}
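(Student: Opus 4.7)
The plan is to establish the identity by first interpreting both sides as integrals over the ambient matrix space $\Ma$ via the homogeneous extensions $E_\lam$, applying the Fourier--Parseval identity, and then invoking the zeta-integral calculus of Subsection 2.2. I would fix $\lam$ in a strip where everything is classically convergent (for instance $m-k-1<\Re\lam<0$), prove the identity there, and appeal to uniqueness of analytic continuation to extend to all $\lam\in\bbc$; both sides are meromorphic in $\lam$ by Theorem \ref{lxhgnf} together with the explicit Gamma-factor normalization on the right.

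For the left-hand side, I substitute the definition (\ref{0mbyd}) of $\cd0\vp$, pass to polar coordinates $x=wr^{1/2}$ with $w\in\vnm$, $r\in\Om_m$, and use the algebraic identity $|u'x|_m=|x|_m\,|u'w|_m$. The factor $|x|_m^\lam$ coming from $E_\lam$ absorbs the cosine factor $|u'w|_m^\lam$, and after Fubini the left-hand side takes the form
\be\label{planLHS}
\frac{1}{\Gam_m((\lam+k)/2)}\intl_{\vnk}\vp(u)\left[\intl_{\Ma}|u'x|_m^\lam\,\overline{\hat\om(x)}\,dx\right]d_*u.
\ee

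The bracketed Parseval pairing is the heart of the argument. Using $O(n)$-equivariance I reduce to $u=u_0$, for which $|u_0'x|_m=|x_2|_m$ depends only on the last $k$ rows $x_2\in\frM_{k,m}$ of $x$. Parseval (\ref{parse}) in the first $n-k$ rows produces a Dirac mass in the dual variables, while the Fourier transform of $|x_2|_m^\lam$ on $\frM_{k,m}$ is the classical functional equation for zeta integrals relating $\Z_{k,m}(\hat f,\lam)$ to $\Z_{k,m}(f,-\lam-k)$. Dividing by $\Gam_m((\lam+k)/2)$ exactly supplies the factor $1/\Gam_m(-\lam/2)$ on the right, together with explicit $2$- and $\pi$-powers; conceptually this is (\ref{oao}) applied to a homogeneous distribution combined with the Siegel gamma identity (\ref{2.4}).

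It remains to recognize the resulting distribution as $E_{-\lam-n}A_{k,m}\vp/\Gam_m(-\lam/2)$. After the Fourier reduction I am left with a pairing of $\om$ against a measure supported on the locus $\{y=uy_2:u\in\vnk,\,y_2\in\frM_{k,m}\}\subset\Ma$ with density proportional to $\vp(u)\,|y_2|_m^{-\lam-k}$. Repolarizing $y=wr^{1/2}$ and disintegrating, the $u$-fiber over a fixed $y$ consists of those $u\in\vnk$ whose column span contains $\{w\}$; by the right $O(k)$-invariance of $\vp$, the average over this fiber is precisely the operator $A_{k,m}$ defined in (\ref{akm}). A Jacobian computation converts $|y_2|_m^{-\lam-k}$ into $|y|_m^{-\lam-n}$ (the gain $|y|_m^{k-n}$ comes from the fiber integration over the Grassmannian of $(k-m)$-planes inside $\{w\}^\perp$), and the remaining Stiefel surface-area ratios combine with (\ref{2.4}) to yield the constant $c=2^{m(n+\lam)}\pi^{nm/2}\Gam_m(n/2)/\Gam_m(m/2)$ in the statement. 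The main obstacle is this constant bookkeeping through two polar decompositions, the zeta functional equation, and the Grassmannian fiber integration; once the constants line up, the homogeneity exponent $-\lam-n$ is forced by the general principle that the Fourier transform of a distribution homogeneous of degree $\lam$ on a $d$-dimensional space is homogeneous of degree $-\lam-d$.
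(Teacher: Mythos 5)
Your route --- extend by homogeneity, pair with $\hat\om$, apply Parseval blockwise in the rows orthogonal to $\{u\}$, reduce to the functional equation for the zeta distribution $|x_2|_m^{\lam}$ on $\frM_{k,m}$, then repolarize and identify the fiber average over frames containing $\{w\}$ with $A_{k,m}\vp$ --- is essentially the method of the source the paper cites for this statement ([Ru13, Corollary 5.5]); the paper gives no independent proof, and its declared technique (homogeneous continuation plus Fourier analysis on $\Ma$) is exactly what you follow. The skeleton is sound: $|u'x|_m=|x|_m\,|u'v|_m$ with $v=x(x'x)^{-1/2}$ is correct, the partial Parseval does produce the restriction of $\om$ to matrices of the form $uy_2$, the functional equation supplies the ratio $\Gam_m((\lam+k)/2)/\Gam_m(-\lam/2)$ together with the $\lam$-dependent power of $2$, and right $O(k)$-invariance of $\vp$ makes the fiber average precisely $(A_{k,m}\vp)(w)$ as in (\ref{akm}).

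Two points need repair. First, your claim that there is a strip, e.g.\ $m-k-1<\Re\lam<0$, in which \emph{both} sides converge classically is false in general: the right-hand pairing requires $\Re(-\lam-n)>m-n-1$, i.e.\ $\Re\lam<1-m$, which is compatible with $\Re\lam>m-k-1$ only when $k\ge 2m-1$; for $k=m\ge 2$ there is no common strip at all. This is repairable --- your own computation produces the right-hand side already in analytically continued (gamma-normalized) form, which is exactly how the theorem is stated --- but the proof must be phrased as an identity of meromorphic continuations, not as a classical identity later continued. Second, the exponent bookkeeping is misattributed: on the relevant set $y=uy_2$ one has $y'y=y_2'y_2$, hence $|y|_m=|y_2|_m$, so no $y$-dependent ``gain $|y|_m^{k-n}$'' can arise from integrating over the compact fiber of frames containing $\{w\}$ (that integration contributes only constants). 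The shift from homogeneity $-\lam-k$ to $-\lam-n$ comes from the different radial densities $2^{-m}\det(r)^{(k-m-1)/2}$ and $2^{-m}\det(r)^{(n-m-1)/2}$ in the polar decompositions of $dy_2$ on $\frM_{k,m}$ and of $dy$ on $\Ma$ (Theorem \ref{l2.3}); both reduce to the same factor $\det(r)^{(-\lam-m-1)/2}$, which is what forces the exponent $-\lam-n$. Finally, the constant $c=2^{m(n+\lam)}\pi^{nm/2}\Gam_m(n/2)/\Gam_m(m/2)$ is asserted rather than derived; it does assemble from the $(2\pi)^{(n-k)m}$ of the partial Parseval, the $2^{m\lam+km}\pi^{km/2}$ of the functional equation, and the Stiefel volume normalizations $\sigma_{n,m}$, $\sigma_{k,m}$ via (\ref{2.4}) and (\ref{2.16}), but since the theorem is precisely this quantitative identity, that computation cannot be waved through.
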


 For the normalized transform $\ncd0 \vp$, (\ref{eq5ya}) yields
\be\label {liyz}
 (E_{\lam} \!\ncd0 \vp, \hat\om)=c_{m,\lam} \, (E_{-\lam-n} A_{k,m} \vp,  \om),\ee
 \[c_{m,\lam}\!=\!2^{m(n+\lam)}\, \pi^{nm/2},\qquad \lam \neq 1-m, \, 2-m, \ldots \,.\]

We define the following  differential operator on $\vnm$:
\be\label {lase1m}
(\Delta_{\lam} f) (v)\!=\! \left (-\frac{1}{4}\right)^m\,(\Del  E_{\lam +2} f)(x)\big |_{x=v}, \ee
\[ v\in \vnm, \qquad  \lam\in\bbc,\]
 where $\Del$ is the  Cayley-Laplace operator (\ref{K-L}).
If $m=1$, then   $\Delta$ is the usual Laplace operator and $\Delta_{\lam}$ expresses through  the  Beltrami-Laplace operator  $\Delta_S$  on the unit sphere as
\[
\Delta_{\lam} f=-\frac{1}{4}\,[\Delta_S f+ (\lam +2)(n+\lam) f].\]
The latter can be easily checked using the product formula
\[ \Del (\vp \psi)= \vp\Del \psi +2\,(\grad \,\vp )\cdot (\grad \,\psi) +\psi\Del \vp;\]
cf. \cite[Proposition 2.2]{Ru20}.

\begin{lemma}\label {wmb}
If $f$ is a right $O(m)$-invariant function on $\vnm$, then $\Delta_{\lam} f$ is a right $O(m)$-invariant function on $\vnm$ too, and
therefore $\Delta_{\lam} $ can be viewed  as a differential operator on the Grassmannian $G_{n,m}$.
\end{lemma}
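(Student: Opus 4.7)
The plan is to unpack the definition of $\Delta_{\lam}$ and reduce right $O(m)$-invariance of $\Delta_{\lam}f$ on $\vnm$ to right $O(m)$-invariance of the ambient function $E_{\lam+2}f$ on $\tilde\frM_{n,m}$, then invoke the known right $O(m)$-invariance of the Cayley--Laplace operator $\Del$ from \eqref{lwliut}.

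The first step is to verify that $E_{\lam+2}f$ is itself right $O(m)$-invariant. Given $\gam\in O(m)$ and $x\in\tilde\frM_{n,m}$, we have $|x\gam|_m=|x|_m$ because $\det(\gam'x'x\gam)=\det(x'x)$. For the polar factor, the key identity is $(\gam' s\gam)^{1/2}=\gam' s^{1/2}\gam$ for any positive definite $s\in\Om_m$, which follows from the spectral decomposition of $s$. Applying this with $s=x'x$ gives
\[
(x\gam)\bigl((x\gam)'(x\gam)\bigr)^{-1/2}=x\gam\,\gam'(x'x)^{-1/2}\gam=x(x'x)^{-1/2}\gam.
\]
Combining these observations with the right $O(m)$-invariance of $f$ on $\vnm$ yields $(E_{\lam+2}f)(x\gam)=(E_{\lam+2}f)(x)$ for every $x\in\tilde\frM_{n,m}$.

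The second step uses \eqref{lwliut}: the Cayley--Laplace operator commutes with the right action of $O(m)$ on $\frM_{n,m}$. Writing $\Phi=E_{\lam+2}f$, this says that $(\Del\Phi)(x\gam)$ equals the result of first transforming $\Phi$ by $\gam$ and then applying $\Del$; since $\Phi$ is already right $O(m)$-invariant by Step 1, we conclude $(\Del\Phi)(x\gam)=(\Del\Phi)(x)$. Restricting to $x=v\in\vnm$ and multiplying by the scalar $(-1/4)^m$ gives $(\Delta_{\lam}f)(v\gam)=(\Delta_{\lam}f)(v)$, which is the desired invariance.

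Finally, the Grassmannian interpretation is immediate from the identification, recorded earlier (see Proposition~\ref{mnxmnb1} and the surrounding discussion), between right $O(m)$-invariant functions on $\vnm$ and functions on $G_{n,m}$ via $\check f(\{v\})=f(v)$. Under this bijection, the map $f\mapsto \Delta_{\lam}f$ descends to a well-defined operator on functions on $G_{n,m}$. The only mildly delicate point in the proof is the $O(m)$-equivariance of the matrix square root used in Step 1; once this is in hand, the rest is a routine application of \eqref{lwliut} and the polar-decomposition formalism already established in the paper.
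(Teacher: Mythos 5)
Your proof is correct and takes essentially the same route as the paper: establish that the homogeneous extension $E_{\lam+2}f$ is right $O(m)$-invariant on $\tilde\frM_{n,m}$ and then apply the invariance (\ref{lwliut}) of the Cayley--Laplace operator, restricting to $x=v\in\vnm$. The only difference is cosmetic --- you verify the invariance via the square-root equivariance $(\gam'(x'x)\gam)^{1/2}=\gam'(x'x)^{1/2}\gam$, whereas the paper writes $r^{1/2}\gam=\th s^{1/2}$ in polar form; both arguments are valid and equivalent.
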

\begin{proof}
 We need to show that $(\Delta_{\lam} f) (v\gam)=(\Delta_{\lam} f) (v)$ for all $\gam \in O(m)$, $v\in \vnm$. Let
\[F (x)=|x|_m^{\lam +2} f(x (x'x)^{-1/2}), \qquad x \in \tilde \frM_{n,m}.\]
Then
\[
(\Delta_{\lam} f) (v\gam )\!=\! \left (-\frac{1}{4}\right)^m\,(\Del  E_{\lam +2} f)(x)\big |_{x=v\gam}\!=\! \left (-\frac{1}{4}\right)^m\,(\Del F)(x)\big |_{x=v\gam}.\]
 By (\ref{lwliut}),
\[
(\Del F)(x)\big |_{x=v\gam}\equiv (\Del F)(x\gam)\big |_{x=v} = (\Del F_\gam)(x)\big |_{x=v}.\]
Here
$F_\gam(x)=F(x\gam)=|x\gam|_m^{\lam +2} f(x\gam (\gam'x'x\gam)^{-1/2})$. Using  polar decomposition $x=vr^{1/2}$, $v\in \vnm$, $r=x'x\in \Om_m$, we write
\[
 f(x\gam (\gam'x'x\gam)^{-1/2})=f(vr^{1/2} \gam (\gam'r\gam)^{-1/2}).\]
The $m\times m$ matrix $r^{1/2} \gam$ can be written in the polar form as $r^{1/2} \gam=\th s^{1/2}$, $\th \in O(m)$,  $s\in \Om_m$. Hence we continue:
\[
 f(vr^{1/2} \gam (\gam'r\gam)^{-1/2})=f(v\th s^{1/2} s^{-1/2})=f(v\th)= f(v),\]
because $f$ is right $O(m)$-invariant. Noting that $|x\gam|_m = |x|_m$, we obtain
$F(x\gam)=|x|_m^{\lam +2} f(x (x'x)^{-1/2})$, which means that $(\Delta_{\lam} f) (v\gam)=(\Delta_{\lam} f) (v)$.
\end{proof}

An analogue of Lemma \ref{wmb} still holds   if we replace the Cayley-Laplace operator $\Del$ by its power and set
\be\label {lase1hh}
(\Delta_{\lam, \ell} f) (v)= \left (-\frac{1}{4}\right )^{m \ell} (\Del^\ell E_{\lam +2\ell} f)(x)\Big |_{x=v}; \qquad \ell = 1,2, \ldots \, . \ee

\begin{theorem}\label {liut0s}  Let   $1\le m\le k\le n-1$,   $\vp \in C^\infty (\vnk)^{O(k)}$.
Then for all complex $\lam$,
 \be\label {lase2hh}
\Delta_{\lam, \ell} \stackrel{*}{\Cs}\!{}_{m, k}^{\lam +2\ell} \vp=\ncd0 \vp,\ee
provided that both sides of this  equality are meaningful and smooth.
     \end{theorem}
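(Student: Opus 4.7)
The plan is Fourier-analytic on the ambient matrix space $\Ma$. The central tool is the identity (\ref{liyz}) (a restatement of Theorem \ref{th:33}), combined with the standard fact that the Cayley-Laplace operator $\Del$ acts as multiplication by $(-1)^m|y|_m^2$ on the Fourier side. The first step is to apply (\ref{liyz}) with $\lam$ replaced by $\lam+2\ell$ to obtain, as tempered distributions on $\Ma$,
\[
\bigl(E_{\lam+2\ell}\,\stackrel{*}{\Cs}\!{}_{m, k}^{\lam+2\ell}\vp,\,\hat\om\bigr)
\;=\; c_{m,\lam+2\ell}\,\bigl(E_{-\lam-2\ell-n}A_{k,m}\vp,\,\om\bigr).
\]
Applying $\Del^\ell$ to the left-hand entry transfers, on the Fourier side, to multiplication of the right-hand entry by $(-1)^{m\ell}|y|_m^{2\ell}$. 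The key algebraic observation is the absorption identity
\[
|y|_m^{2\ell}\,E_{-\lam-2\ell-n}A_{k,m}\vp(y)
\;=\; |y|_m^{-\lam-n}\,(A_{k,m}\vp)\bigl(y(y'y)^{-1/2}\bigr)
\;=\; E_{-\lam-n}A_{k,m}\vp(y),
\]
which reduces the new right-hand side to the same $E_{-\lam-n}A_{k,m}\vp$ that appears in (\ref{liyz}) at the original parameter $\lam$. Fourier uniqueness therefore gives
\[
\Del^\ell\,E_{\lam+2\ell}\,\stackrel{*}{\Cs}\!{}_{m, k}^{\lam+2\ell}\vp
\;=\; \frac{(-1)^{m\ell}\,c_{m,\lam+2\ell}}{c_{m,\lam}}\,E_\lam\,\ncd0\vp
\;=\; (-4)^{m\ell}\,E_\lam\,\ncd0\vp,
\]
since $c_{m,\lam+2\ell}/c_{m,\lam}=2^{2m\ell}$.

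The second step is to restrict to a Stiefel point $x=v\in\vnm$. Because $|v|_m=1$, one has $(E_\mu\,\stackrel{*}{\Cs}\!{}_{m, k}^{\mu}\vp)(v)=(\stackrel{*}{\Cs}\!{}_{m, k}^{\mu}\vp)(v)$ for every admissible $\mu$. Multiplying both sides of the displayed identity by the prefactor $(-1/4)^{m\ell}$ coming from definition (\ref{lase1hh}) and noting $(-1/4)^{m\ell}(-4)^{m\ell}=1$, I obtain
\[
(\Delta_{\lam,\ell}\,\stackrel{*}{\Cs}\!{}_{m, k}^{\lam+2\ell}\vp)(v)\;=\;(\ncd0\vp)(v),
\]
which is the claim. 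A routine extension of Lemma \ref{wmb} to $\Delta_{\lam,\ell}$ ensures that the right $O(m)$-invariance of $\stackrel{*}{\Cs}\!{}_{m, k}^{\lam+2\ell}\vp$ is preserved, so the restriction of $\Del^\ell E_{\lam+2\ell}\stackrel{*}{\Cs}\!{}_{m, k}^{\lam+2\ell}\vp$ from $\tilde\Ma$ to $\vnm$ is intrinsic and independent of the choice of extension.

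The principal obstacle is not the algebra but the analytic book-keeping. The identity (\ref{liyz}) holds only in the sense of analytic continuation, so the manipulations above are, a priori, identities of tempered distributions on $\tilde\Ma$. The proviso ``provided both sides of this equality are meaningful and smooth'' is precisely what is needed in order to pass from this distributional statement to a pointwise equality at $x=v$: away from the polar set of the normalizing factor $\gam_{m,k}$ both sides are genuine smooth functions of $v$, the restriction to $\vnm$ is legitimate, and the Fourier uniqueness argument goes through by the usual density argument with Schwartz test functions $\om$.
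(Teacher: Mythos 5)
Your proposal is correct and follows essentially the same route as the paper's own proof: apply the Fourier identity (\ref{liyz}) at $\lam+2\ell$ with the modified test function $|x|_m^{2\ell}\om$, absorb the factor $|y|_m^{2\ell}$ into $E_{-\lam-2\ell-n}A_{k,m}\vp$ to recover $E_{-\lam-n}A_{k,m}\vp$, match the constants to get $(-4)^{m\ell}$, and then use smoothness of both sides on $\tilde\frM_{n,m}$ to pass from the distributional identity to a pointwise one and restrict to $x=v\in\vnm$. The only cosmetic difference is that the paper spells out the duality step (moving $\Del^\ell$ onto $\hat\om$ via $\Del^\ell\hat\om=(-1)^{m\ell}\widehat{|x|_m^{2\ell}\om}$) and first treats $\ell=1$, but the argument is the same.
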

 \begin{proof}   Let first $\ell =1$. Suppose
 $\om \in S(\frM_{n,m})$, and let  $ \hat\om(y)$
  be the Fourier transform  of $\om$.  Then, by (\ref{liyz}),
      \be\label{eq5sa}  (E_{\lam} \ncd0 \vp, \hat\om)\!=\!c_{m,\lam} \, (E_{-\lam-n} A_{k,m} \vp,  \om),\quad c_{m,\lam}\!=\!2^{m(n+\lam)}\, \pi^{nm/2},\ee
for all complex $\lam\notin \{1\!-\!m, \,2\!-\!m, \ldots \}\,$.
 Setting $\om_1 (x)=|x|_m^{2} \om (x)$ and
using (\ref{eq5sa}) repeatedly, we obtain
\bea
\left (\Del [E_{\lam +2} \! \stackrel{*}{\Cs}\!{}_{m, k}^{\lam +2} \vp], \hat\om\right)&=& (-1)^m\left (E_{\lam +2} \!  \stackrel{*}{\Cs}\!{}_{m, k}^{\lam +2} \vp, \hat\om_1 \right)\nonumber\\
&=& (-1)^m c_{m,\lam +2} \, \left (E_{-\lam-2-n} A_{k,m} \vp,  \om_1\right)\nonumber\\&=&  (-1)^m c_{m,\lam +2} \,\left (E_{-\lam-n} A_{k,m} \vp,  \om\right)\nonumber\\
&=&(-1)^m \frac{c_{m,\lam +2}}{c_{m,\lam}} \left(E_{\lam} \!\ncd0 \vp, \hat\om\right)\nonumber\\&=& (-4)^m \left(E_{\lam}\! \ncd0 \vp, \hat\om\right).  \nonumber\eea
  Since  $\stackrel{*}{\Cs}\!{}_{m, k}^{\lam +2} \vp$ and $\ncd0 \vp$ are smooth on $\vnm$, the functions
 \[
 \Del [E_{\lam +2} \! \stackrel{*}{\Cs}\!{}_{m, k}^{\lam +2} \vp] \quad \text {\rm and}\quad  E_{\lam}\! \ncd0 \vp\]
   are smooth on $\tilde \frM_{n,m}$, and therefore
 \[ (\Del [E_{\lam +2} \! \stackrel{*}{\Cs}\!{}_{m, k}^{\lam +2} \vp]) (x)=(-4)^m  (E_{\lam}\! \ncd0 \vp)(x)\]
  for all $x\in \tilde \frM_{n,m}$. Setting $x=v\in \vnm$, we obtain the result.
 In the general case the proof is similar: just  set $\om_1 (x)=|x|_m^{2\ell} \,\om (x)$) to obtain
\[ (\Del^\ell E_{\lam +2\ell} \stackrel{*}{\Cs}\!{}_{m, k}^{\lam +2\ell} \vp) (x)=(-4)^{m \ell}\,  (E_{\lam}\, \ncd0 \vp)(x), \quad x\in \tilde \frM_{n,m}. \]
This completes the proof.
  \end{proof}

 \begin{remark} \label {lkug} By Theorem \ref{mnbqwo} (i), the conditions of Theorem \ref{liut0s} are satisfied  if
  \[\lam, \lam +2\ell  \notin \{-k-1, -k-2, \ldots \} \cup \{ 1-m,\, 2-m, \ldots  \}.\]
 If, moreover, $ k\le n-m$, then, by  Corollary \ref{mcmndfi},  both $\stackrel{*}{\Cs}\!{}_{m, k}^{\lam +2\ell} \vp$ and $\ncd0 \vp$ are smooth for $\lam +2\ell \neq 1\!-\!m, \,2\!-\!m, \ldots$
 provided that  $\vp$ lies in the range of the Funk transform, i.e.,  $\vp =F_{m,k} f$,  $f \in C^\infty (\vnm)$.
 \end{remark}

Combining the formula $\S_{m}^\lam f\!=\!\tilde \del  \ncd0 F_{m,k} f$ (see (\ref{ktyy51}))
with  (\ref{lase2hh}), we obtain
 \be\label {sin2h}
\Delta_{\lam, \ell}\, \S_m^{\lam +2\ell} f=\S_m^{\lam} f,  \qquad \lam \in \bbc, \quad \lam +2\ell\neq 1\!-\!m, \,2\!-\!m, \ldots,\ee
where $1\le m\le n-m$ and $\Delta_{\lam, \ell}$ is defined by  (\ref{lase1hh}).
Regarding applicability of this reasoning,  we recall that whenever $1\le m\le k\le n-m$,  $F_{m,k}$ acts from $C^\infty (\vnm)$ to $C^\infty (\vnk)^{O(k)}$  and
$\stackrel{*}{\Cs}\!{}_{m, k}^{\lam}$ acts from the range $F_{m,k} (C^\infty (\vnm))$ to  $C^\infty (\vnm)^{O(m)}$ for all
$\lam \in \bbc$, except poles.

\section{Inversion  Formulas}\label {ljkb2q}

As usual in the Radon transform theory, we distinguish the local  inversion formulas and the nonlocal ones, depending on the parity of dimensions involved. In the case of the Funk-cosine transform $F_{m,k}^{(j)}$, the formulas of the fist kind correspond to  $n-m+j-k$  even, while the  most difficult case, when $n-m+j-k$ is odd, deals with formulas of the second kind. For the sake of simplicity and consistency with the previous text,
all inversion formulas in this section are presented for functions on  the Stiefel manifold $\vnm$. Because functions on the Grassmannian $G_{n,m}$ can be viewed as   right $O(m)$-invariant functions on $\vnm$, the
 reader can easily reformulate the results in the Grassmannian terms.

\subsection{Local Inversion}

 \begin{theorem} \label{jhb67}  Let  $\vp = F_{m,k}^{(j)} f$, $\,f\in C^\infty (\vnm)^{O(m)}$,
\[ 1\le m\le k\le  n-m, \qquad m-n\le j-k \le \min (-m, m-k-1), \quad  j\ge 0.\]
If $n-m+j-k$ is even and $\ell =(n-m+j-k)/2\ge 0$, then
\[f= \del_j \, \Delta_{m-n, \ell} \,\psi, \qquad \psi= \fd \vp,  \]
where
\[  \del_j = \frac{\gm((k-j)/2)\, \gm(m/2)}{\gm((n-k+j)/2)\,\Gam_{m}((n\!-\!m)/2)}, \]
\[
(\Delta_{m-n, \ell}\, \psi) (v)= \left (-\frac{1}{4}\right )^{m \ell} (\Del^\ell E_{m-n +2\ell} \,\psi)(x)\Big |_{x=v}, \qquad v \in \vnm. \]
\end{theorem}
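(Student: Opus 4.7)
The plan is to combine three ingredients already in place: Corollary~\ref{pvvlz}, which expresses $\S_m^{j-k}f$ as a constant multiple of $\fd F_{m,k}^{(j)}f=\fd\vp=\psi$; the operator identity (\ref{sin2h}), which lowers the parameter of the normalized $\lam$-sine transform by $2\ell$ through application of $\Delta_{\lam,\ell}$; and Lemma~\ref{lcjr1}, specifically (\ref{tyy5hh}), which identifies the analytic continuation of $\S_m^\lam f$ at $\lam=m-n$ with $f$.

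Concretely, I would set $\lam=m-n$. Under the parity hypothesis the integer $\ell=(n-m+j-k)/2$ is nonnegative, and it satisfies $\lam+2\ell=j-k$. Applying (\ref{sin2h}) gives
\[
\Delta_{m-n,\ell}\,\S_m^{j-k}f=\S_m^{m-n}f=f,
\]
the last equality coming from (\ref{tyy5hh}). On the other hand, Corollary~\ref{pvvlz} yields
\[
\S_m^{j-k}f=\del_j\,\fd F_{m,k}^{(j)}f=\del_j\,\psi.
\]
Linearity of $\Delta_{m-n,\ell}$ then gives $\del_j\,\Delta_{m-n,\ell}\psi=f$, which is the claimed inversion formula.

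What remains is to check that none of the parameter values lie in a forbidden pole locus. The assumption $1\le m\le k\le n-m$ forces $n\ge 2m$, hence $m-n\le -m$; together with $j-k\le -m$ from the hypothesis, both $\lam$ and $\lam+2\ell$ lie strictly below $1-m$ and therefore avoid the poles $\{1-m,2-m,\ldots\}$ of the Siegel gamma factor in $\del_m(\lam)$, so that (\ref{sin2h}) is indeed applicable at $\lam=m-n$. The smoothness of $\psi=\fd\vp\in C^\infty(\vnm)^{O(m)}$ needed to make sense of $\Delta_{m-n,\ell}\psi$ as in (\ref{lase1hh}) follows from Lemma~\ref{jygcs} applied to $\vp=F_{m,k}^{(j)}f$, together with Lemma~\ref{aqr1}.

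I do not anticipate a genuine obstacle: the argument is essentially bookkeeping once (\ref{sin2h}), Corollary~\ref{pvvlz}, and (\ref{tyy5hh}) are in hand. The substantive work has already been absorbed into the proof of Theorem~\ref{liut0s}, where the Cayley-Laplace operator was pushed through the meromorphic continuation via Fourier analysis on $\frM_{n,m}$; the only delicate point here is the parity assumption, which is precisely what guarantees that $\ell$ is a nonnegative integer, so that $\Delta_{m-n,\ell}$ is a bona fide differential operator of order $2m\ell$ rather than a fractional-order object.
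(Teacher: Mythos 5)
Your argument is correct and coincides with the paper's own proof: both combine (\ref{tyy5hh}), the identity (\ref{sin2h}) at $\lam=m-n$ with $\lam+2\ell=j-k$, and Corollary \ref{pvvlz} (formula (\ref{tyy51})) to obtain $f=\del_j\,\Delta_{m-n,\ell}\,\fd\vp$. Your additional checks of the pole locus and of the smoothness of $\psi$ are consistent with the hypotheses already built into those results.
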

\begin{proof}
 By (\ref{tyy5hh}),  $f= \S_{m}^{m-n} f$, where,  by
(\ref{sin2h})  with $\lam=m-n$, we have
\[\S_{m}^{m-n} f=\Delta_{m-n, \ell}\, \S_m^{j-k} f.\]
Because $\S_m^{j-k} f=\del_j \fd F_{m,k}^{(j)} f$ (see (\ref{tyy51})), the result follows.
\end{proof}

The case $j=0$, when  $\vp= F_{m,k} f$ is the usual Funk transform, deserves special mentioning.  If $n-m-k$ is even, the above theorem yields
\be\label {entgce}
f= \del_0 \, \Delta_{m-n, \ell}  \fd \vp,  \ee
where
\be\label {errnce}
  \del_0 \!=\! \frac{\gm(k/2)\, \gm(m/2)}{\gm((n\!-\!k)/2)\,\Gam_{m}((n\!-\!m)/2)}, \qquad \ell \!=\!\frac{n-m-k}{2}\ge 0.\ee

In the case $m=k=1$,  (\ref{entgce}) agrees with Helgason's formula in \cite [Theorem 1.17, p. 133]{H11} and the corresponding formula in \cite{Ru20}. In the general case, our formula can be used as a substitute for known local inversion formulas in
\cite{GGR, Gr1, GR, K, SZ}.

\subsection{Nonlocal  Inversion of $F_{m,k}$  in the Case  $m<k$}

If $n-k-m$ is odd, a local inversion formula, like (\ref{entgce}), is not available. However, if $ m< k$,  the following  result for the Funk transform $F_{m,k}$ (i.e., $j=0$) can be obtained  by making use of the intermediate dual Funk-cosine  transform $ \stackrel{*}{F}{}_{m,k}^{\!(1)}$.

 \begin{theorem} \label{jhbsa67} Let $\vp \!= \! F_{m,k} f$, $f \!\in C^\infty (\vnm)^{O(m)}$,   $1\le m< k\le  n \! -  \!m$. Suppose that $n-k-m$ is odd. Then
\[f= c\, \Delta_{m-n, \ell}\,\stackrel{*}{F}{}_{m,k}^{\!(1)} \vp,  \]
where
\[c =\frac{\gm(m/2)\, \gm((k-1)/2)}{\gm((n-m)/2)\, \gm((n-k+1)/2)}, \qquad \ell =\frac{n-k-m+1}{2}.\]
  \end{theorem}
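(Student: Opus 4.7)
The plan is to adapt the local-inversion argument of Theorem~\ref{jhb67}, shifting the intermediate sine-transform exponent from $-k$ to $1-k$ in order to restore integrality of the power of the Cayley--Laplace reduction. The parity assumption that $n-m-k$ is odd forces this shift: the value $(n-m-k)/2$ used in the local case is not an integer, whereas $\ell=(n-m-k+1)/2$ is. Everything else follows the same template: (tyy5hh) to reproduce $f$, (sin2h) to descend along the sine ladder, and Corollary~\ref{pvvlz} to convert the sine transform into a composition involving $F_{m,k}$.

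First I would start from the reproducing identity $f=\S_m^{m-n}f$ given by~(\ref{tyy5hh}), which is available because $2m\le n$ follows from $m<k\le n-m$. Next I would apply the reduction formula~(\ref{sin2h}) with $\lam=m-n$ and $\ell=(n-k-m+1)/2$, a positive integer by the parity hypothesis; neither $m-n$ nor $\lam+2\ell=1-k$ lies in the polar set $\{1-m,2-m,\ldots\}$ (for the second, use $m<k$), so
\[
f \;=\; \S_m^{m-n}f \;=\; \Delta_{m-n,\ell}\,\S_m^{1-k}f.
\]
Then I would invoke Corollary~\ref{pvvlz} with $j=1$ to rewrite
\[
\S_m^{1-k}f \;=\; \del_1\,\stackrel{*}{F}{}_{m,k}^{\!(1)} F_{m,k}f \;=\; \del_1\,\stackrel{*}{F}{}_{m,k}^{\!(1)}\vp,
\]
whose constant
\[
\del_1=\frac{\gm((k-1)/2)\,\gm(m/2)}{\gm((n-k+1)/2)\,\gm((n-m)/2)}
\]
is precisely the $c$ in the statement. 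Substituting into the previous display yields the claimed formula.

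The main obstacle will be verifying the applicability of Corollary~\ref{pvvlz} at $j=1$: the admissibility inequality~(\ref{liddyzs}) requires $1-k\le\min(-m,\,m-k-1)$, i.e.\ $m\ge 2$, which is automatic for $m\ge 2$ under $m<k$ but fails for $m=1$. For $m=1$ I would argue directly that~(\ref{kfskfn}) extends by continuity to $\lam=1-k$, since $(k-1)/2\ge 1/2$ avoids the poles of $\Gam_1$ for $k\ge 2$, and then combine with~(\ref{ktyy51}), which is valid at $\lam=1-k$ because $1-k\notin\{0,1,2,\ldots\}$ when $k\ge 2$. Smoothness of the intermediate objects at $\lam+2\ell=1-k$ is covered by Remark~\ref{lkug}, since $\vp=F_{m,k}f$ lies in the range of the Funk transform. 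I expect the bookkeeping of these admissibility conditions, rather than any genuinely new analytic difficulty, to be the only non-routine part of the argument.
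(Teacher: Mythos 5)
Your main argument is correct and is essentially the paper's own proof: the paper also starts from $f=\S_m^{m-n}f$, applies (\ref{sin2h}) with $\ell=(n-k-m+1)/2$ to land on $\S_m^{1-k}f$, and then converts $\S_m^{1-k}f$ into $c\,\stackrel{*}{F}{}_{m,k}^{\!(1)}\vp$; the only cosmetic difference is that you cite Corollary \ref{pvvlz} with $j=1$, while the paper unwinds that corollary into its two ingredients, namely (\ref{ktyy51}) together with Corollary \ref{mcmndfi} at $\lam=1-k$, and then (\ref{zn0xdro}) with $j=1$. The constants agree, since $\del_1$ is exactly $\tilde\del\,\tilde c_1$.

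One caveat concerns your proposed patch for $m=1$. For $m=1$ the point $\lam=1-k$ lies inside the region of absolute convergence ($\Re\lam>m-k-1=-k$), so the analytic continuation of $\Cs^{\lam}_{1,k}f$ at $\lam=1-k$ is the genuine integral with kernel $|u'v|^{1-k}$; it is not a multiple of the rank-one intermediate transform $F^{(1)}_{1,k}f$, because the degeneration to a lower-rank measure in (\ref{kfkfn}) occurs only at the pole values $\lam=j-k$ with $j\le m-1$ (Lemma \ref{tzk}). So ``extending (\ref{kfskfn}) by continuity to $\lam=1-k$'' does not yield the identity you need, and your $m=1$ argument does not go through. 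You are not worse off than the paper here: its proof invokes (\ref{zn0xdro}), i.e.\ Theorem \ref{mnbqwo}(ii), with $j=1$, which also requires $0\le j\le m-1$, hence $m\ge2$, even though the theorem is stated for $1\le m<k$. In short, your proof matches the paper's for $m\ge2$; the case $m=1$ is a defect of the stated hypotheses rather than of your approach, but the continuity argument you sketch for it should be dropped or replaced.
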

\begin{proof} By  (\ref{ktyy51}) and Corollary \ref{mcmndfi} (with $\lam =1-k$) we have
\[\S_{m}^{1-k} f\!=\!\tilde \del  \stackrel{*}{\Cs}\!{}_{m, k}^{1-k} \vp, \qquad \tilde \del =\frac{\Gam_{m} (k/2)}{\Gam_{m} ((n\!-\!m)/2)},\]
 where, by (\ref{zn0xdro}),
\[\stackrel{*}{\Cs}\!{}_{m, k}^{1-k} \vp=\tilde c_1\,\stackrel{*}{F}{}_{m,k}^{\!(1)} \vp, \qquad \tilde  c_1 = \frac{\gm(m/2)\, \gm((k-1)/2)}{\gm(k/2)\, \gm((n-k+1)/2)}.\]
Hence, by (\ref{sin2h}),
\bea
f&=&\S_{m}^{m-n} f=\Delta_{m-n, \ell}\, \S_m^{m-n+2\ell} f=\Delta_{m-n, \ell}\, \S_m^{1-k} f \nonumber\\
&=& \tilde \del \Delta_{m-n, \ell}\,\stackrel{*}{\Cs}\!{}_{m, k}^{1-k} \vp=
\tilde \del \tilde c_1\, \Delta_{m-n, \ell}\,\stackrel{*}{F}{}_{m,k}^{\!(1)} \vp,\nonumber\eea
which gives the result.
\end{proof}

\subsection{The case  $k=m$, $j=0$}

In this case  the dual Funk transform intertwines the differential operator in the local inversion formula,  and, as a result,  we have two different inversion  formulas. The proof relies on the previous formulas   for $F_{m,k}^{(j)} f$, according to which  the restriction $k=m$ implies $j=0$. It means that our reasoning works only for the Funk transform $F_m$ acting from $C^\infty (\vnm)^{O(m)}$  into itself.

 \begin{theorem} \label{jhb67A}  Let  $\vp = F_m f$, $\,f\in C^\infty (\vnm)^{O(m)}$,
$1\le m\le   n-m$. Suppose that $n$ is even and  $ \ell \!=\!(n-2m)/2$.
If
\[D=c \,\Delta_{m-n, \ell}, \qquad  c= \left (\frac{\gm(m/2)}{\Gam_{m}((n\!-\!m)/2)}\right )^2,\]
then
\be\label {encea}
f= D F_m \vp = F_m D \vp.  \ee
\end{theorem}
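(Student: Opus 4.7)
The plan is to derive both equalities in \eqref{encea} in parallel from Lemma~\ref{lcjrw} specialized to $k = m$, combined with the differential identity \eqref{lase2hh} and the analytic-continuation value $\Cs_m^{-m} = \tilde c_0\, F_m$. The key is that when $k = m$, the two representations for $\S_m^\lam f$ in Lemma~\ref{lcjrw} collapse to
\[
\tilde\del\, \Cs_m^\lam F_m f = \tilde\del\, F_m \Cs_m^\lam f, \qquad \tilde\del = \frac{\Gam_m(m/2)}{\Gam_m((n-m)/2)},
\]
which will automatically supply the $DF_m\vp$ and $F_m D\vp$ sides of the final formula.

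First, I would write $f = \S_m^{m-n} f$ using \eqref{tyy5hh} from Lemma~\ref{lcjr1}. The meromorphic extensions of both forms in Lemma~\ref{lcjrw} (Corollary~\ref{mcmndfi}) are valid at $\lam = m-n$, because $m-n \le -m < 1-m$ avoids the excluded poles $\{1-m, 2-m, \ldots\}$. This yields
\[
f = \tilde\del\, \Cs_m^{m-n} F_m f = \tilde\del\, F_m \Cs_m^{m-n} f.
\]

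Next, I would eliminate $\Cs_m^{m-n}$ in favour of $F_m$. With $\ell = (n-2m)/2$, one has $\lam + 2\ell = -m$, and the identity \eqref{lase2hh} (valid for $\Cs_m$ because $\Cs_m = \stackrel{*}{\Cs}_m$ when $k=m$) gives $\Cs_m^{m-n}\psi = \Delta_{m-n,\ell}\,\Cs_m^{-m}\psi$. Theorem~\ref{mnbqw} with $j=0$ and $k=m$ (whose hypothesis $m+k\le n$ is our $2m\le n$) then yields $\Cs_m^{-m}\psi = \tilde c_0\, F_m\psi$ with $\tilde c_0 = \Gam_m(m/2)/\Gam_m((n-m)/2) = \tilde\del$. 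Substituting ($\psi = F_m f$ in the first identity, $\psi = f$ in the second) produces
\[
f = \tilde\del\, \tilde c_0\, \Delta_{m-n,\ell}\, F_m^2 f = \tilde\del\, \tilde c_0\, F_m\, \Delta_{m-n,\ell}\, F_m f,
\]
and since $\tilde\del\,\tilde c_0 = c$ and $\vp = F_m f$, this reads $f = D F_m \vp = F_m D \vp$.

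The main care required is in the analytic-continuation bookkeeping: one must verify that neither $\lam = m-n$ nor $\lam = -m$ lies in the polar sets arising in Theorem~\ref{mnbqw}, Theorem~\ref{mnbqwo}, and Corollary~\ref{mcmndfi}, and that the intermediate objects $\Cs_m^{-m}f$, $\Delta_{m-n,\ell}\Cs_m^{-m}f$, and $\Cs_m^{m-n} F_m f$ are smooth right $O(m)$-invariant functions on $\vnm$, so that \eqref{lase2hh} genuinely applies. These checks are routine given $n$ even and $2m\le n$. Notably, no separate commutativity argument for $D$ and $F_m$ is needed: the required symmetry is already packaged in the two forms of Lemma~\ref{lcjrw} at $k=m$.
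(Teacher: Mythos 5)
Your proposal is correct and follows essentially the same route as the paper: $f=\S_m^{m-n}f$, the two forms of Lemma \ref{lcjrw} at $k=m$, the identity (\ref{lase2hh}) with $\lam=m-n$, $\lam+2\ell=-m$, and $\Cs_m^{-m}=\tilde c_0 F_m$ from (\ref{kfskfn}); the only cosmetic difference is that you rederive the first equality in parallel, whereas the paper simply cites Theorem \ref{jhb67} (whose proof uses the same chain). The analytic-continuation checks you flag are indeed routine, since at $k=m$ the dual and direct cosine transforms coincide, so Theorem \ref{mnbqw}(i) gives smoothness with poles only at $1-m,2-m,\dots$, which $m-n$ and $-m$ avoid.
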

\begin{proof} In view of Theorem \ref{jhb67},  it remains  to prove the second equality in (\ref{encea}). As above,
$f= \S_{m}^{m-n} f$, where by (\ref{ktyy51}) and Corollary \ref{mcmndfi}  (with $\lam =m-n$),
\[\S_{m}^{m-n} f=\tilde \del   F_m \Cs_{m}^{m-n} f,  \qquad \tilde \del =\frac{\Gam_{m} (m/2)}{\Gam_{m} ((n\!-\!m)/2)}.\]
Using (\ref{lase2hh}) and (\ref{kfskfn}), we can write
\[
\Cs_{m}^{m-n} f=\Delta_{m-n, \ell} \Cs_{m}^{m-n+2\ell} f=\Delta_{m-n, \ell} \Cs_{m}^{-m} f=   \tilde c_0\,\Delta_{m-n, \ell}  F_{m} f,\]
where $\tilde  c_0 = \tilde \del$.
Hence $f=  \tilde \del^2\,F_m \Delta_{m-n, \ell}  F_{m} f=F_m D \vp$.
\end{proof}

Theorem \ref{jhb67A} agrees with the known case $m=1$ for the sphere; see \cite [Theorem 2.6 (i)]{Ru20}.

\section{Conclusion and Open Problems} \label{kjb8j}

In the present paper we  introduced a new family of differential operators on Stiefel (or Grassmann) manifolds and applied these operators to the study of $\lam$-cosine transforms,  Funk transforms, and their intermediate modifications. Our main objective was inversion formulas  on smooth functions. The main tool was  the classical Fourier analysis on matrix space. Of course, many problems are still open. Below we list some of them with the hope that the  reader will be inspired to make further progress.

\vskip 0.2truecm

{\bf 1.} {\it A nonlocal inversion formula for the Funk transform $F_m=F_{m,m}$ in terms of the differential operator
$\Delta_{\lam, \ell}$  and a suitable  back-projection operator, when    $n$ is odd, $1\le m<n/2$}.

 Theorems \ref {jhb67},  \ref{jhbsa67}, do not cover this case.   Nonlocal inversion formulas for the Funk transform on Grassmannians are known in different terms \cite{GGR, GR, Zh1}.

\vskip 0.2truecm

{\bf 2.} {\it  A nonlocal inversion formula for the intermediate  Funk-cosine transform $F_{m,k}^{(j)}$ when $j>0$ and
$n-m+j-k$ is odd};  cf. Theorem \ref {jhb67}, where $n-m+j-k$ is even.

\vskip 0.2truecm

{\bf 3.} {\it An analogue of the equality $\Delta_{\lam, \ell} \stackrel{*}{\Cs}\!{}_{m, k}^{\lam +2\ell} \vp=  \stackrel{*}{\Cs}\!{}_{m, k}^{\lam} \vp$  for $\Cs_{m, k}^{\lam+2\ell}f$ and $\Cs_{m, k}^{\lam}f$, $k>m$};  cf. Theorem \ref{liut0s}.

\vskip 0.2truecm

{\bf 4.} {\it Intertwining formulas for $k>m$ and $j>0$, generalizing  (\ref{encea})}. Formulas of this kind are well known in the Radon transform theory; cf. Theorems 3.1 and 3.8 in \cite[Chapter 1, Section 3]{H11} for the hyperplane Radon transform on $\rn$.

\vskip 0.2truecm

{\bf 5.} {\it  Non-injectivity of $F_{m, k}$ (on right $O(m)$-invariant functions) when $m>k$ or, equivalently,  of $R_{p,q}$ when $p+q>n$. Is it possible to give a relatively simple counterexample?}

 Note that if $k+m\le n$, then, by (\ref{r5yyy}), $\dim \vnm > \dim \vnk$ if and only if $m>k$. Similarly, if $p<q$, then  $p+q>n$ if and only if  $\rank \, G_{n,p} > \rank \, G_{n,q}$.  An outline of the proof,  that $R_{p,q}$ is non-injective when $p+q>n$, was communicated by T. Kakehi \cite{K2} in group representation terms.

\vskip 0.2truecm

{\bf 6.} {\it An analogue of Problem {\bf 5} for intermediate Funk-cosine transforms and their Grassmannian modifications}; {\it see Conjecture \ref {m0opqw}}.

This conjecture resembles  the known fact that the non-normalized $\lam$-cosine transform is non-injective, when $\lam$ is a positive integer; see  \cite{GH, OR05, OR06}.

\vskip 0.2truecm

\appendix

\setcounter{section}{0}

\renewcommand{\thesection}{A}


\section*{Appendix. Smooth Functions on Stiefel and Grassmann  Manifolds}

\vskip 0.2truecm


Below we present necessary information about Stiefel and Grassmann manifolds in a consistent way and prove some auxiliary statements. Our main objective is characterization of smooth functions on these manifolds. Most of this material is scattered in numerous books and papers, according to needs and taste of the authors; see, e.g., \cite{AMS,  Boo, Lee, W83}.  Lemma \ref{wnvp}  is new. We  refer to \cite {Lee} for  terminology.


\subsection{Stiefel  manifolds}

Recall that $\frM_{n,m}$, $n\ge m$, is  the real vector
space of  matrices $x=(x_{i,j})$ having $n$ rows and $m$ columns. We equip $\frM_{n,m}$ with a natural linear manifold structure. A chart of this manifold is given by a map
\[\frM_{n,m}\to\bbr^{nm}, \quad  x \to (x_{1,1}, \ldots, x_{n,1}, x_{1,2}, \ldots, x_{n,2}, \ldots, x_{1,m}, \ldots, x_{n,m}),\]
that  stacks the columns of $x$  below one another. Let  $\tilde\frM_{n,m}$ be the set  of all matrices $x \in \frM_{n,m}$ of rank $m$,  which is an open subset of $\frM_{n,m}$. We consider $\tilde\frM_{n,m}$ as a smooth manifold with the differentiable structure  inherited from  $\frM_{n,m}$.

Let $\vnm= \{v \in \tilde \frM_{n,m}: v'v=I_m \}$
 be  the set of all orthonormal $m$-frames in $\bbr^n$.  Because $v'v=I_m$ gives $m(m+1)/2$ functionally independent polynomial conditions on the $nm$ entries $v_{i,j}$ of $v$, $\vnm$ is an algebraic variety of dimension
 \be\label {r5yyy} d_m=nm-m(m+1)/2.\ee
 It is also a closed subset   of the sphere of radius $\sqrt {m}$ in $\bbr^{nm}$.

 The subset   $\vnm  \subset \tilde\frM_{n,m} $
  can be regarded  as an  embedded submanifold of $\tilde\frM_{n,m}$.
  To prove the latter, consider the  polynomial map \[F: \tilde\frM_{n,m}\to \frM_{m,m}, \qquad F(x)= x^\prime x.\]
 The differential of $F$ is given by
\[DF(x)y=x^\prime y + y^\prime x, \qquad y\in \frM_{n,m}.\]
 It follows that  $F$ has full rank; see, e.g., \cite [p. 26] {AMS} for details. Then,
by the inverse function theorem  \cite[Theorem 1.38]{W83}, $V_{n,m}=F^{-1}(I_m)$
is an embedded submanifold of $\tilde\frM_{n,m}$ with unique  differentiable structure inherited from $\tilde\frM_{n,m}$.  With this manifold structure, the set $\vnm$ is known as the
{\it Stiefel manifold}. Important particular cases are $m=1$ and $m=n$, when $V_{n,1}=\sn$ and $V_{n,n}=O(n)$.

\begin{proposition}\label{mmxzc} The maps
\[O(n) \times \vnm  \to \vnm, \qquad     (g, v) \to g v,\]
\[\vnm \times O(m)  \to \vnm, \qquad     (v,\gam) \to v\gam,\]
 are smooth.
\end{proposition}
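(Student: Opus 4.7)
The plan is to reduce smoothness of the two maps to smoothness of matrix multiplication on the ambient matrix spaces, and then invoke the standard fact that a map into an embedded submanifold is smooth as a map to the submanifold if and only if it is smooth as a map into the ambient manifold (see, e.g., \cite{Lee}).

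First I would note that, by the discussion preceding the proposition, $\vnm$ is an embedded submanifold of the open set $\tilde\frM_{n,m}\subset \frM_{n,m}$, and similarly $O(n)=V_{n,n}$ and $O(m)=V_{m,m}$ are embedded submanifolds of $\tilde\frM_{n,n}$ and $\tilde\frM_{m,m}$, respectively. Therefore the product manifolds $O(n)\times\vnm$ and $\vnm\times O(m)$ are embedded submanifolds of $\frM_{n,n}\times\frM_{n,m}$ and $\frM_{n,m}\times\frM_{m,m}$, respectively, with the differentiable structures inherited from these ambient linear spaces.

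Next I would consider the matrix multiplication maps
\[
\mu_1:\frM_{n,n}\times \frM_{n,m}\to \frM_{n,m},\qquad (g,v)\mapsto gv,
\]
\[
\mu_2:\frM_{n,m}\times \frM_{m,m}\to \frM_{n,m},\qquad (v,\gamma)\mapsto v\gamma.
\]
In the global linear charts on $\frM_{n,n}$, $\frM_{n,m}$, $\frM_{m,m}$ (that stack columns into $\bbr^{nm}$ etc.), the entries of $\mu_1(g,v)$ and $\mu_2(v,\gamma)$ are polynomial functions of the entries of the arguments, so $\mu_1$ and $\mu_2$ are $C^\infty$. Restricting to the embedded submanifolds $O(n)\times\vnm$ and $\vnm\times O(m)$, the compositions $\mu_1|_{O(n)\times\vnm}$ and $\mu_2|_{\vnm\times O(m)}$ are smooth as maps into $\frM_{n,m}$, since the inclusion of an embedded submanifold into the ambient manifold is smooth.

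It remains to check that these restrictions take values in $\vnm$ and conclude smoothness into $\vnm$. For $g\in O(n)$ and $v\in\vnm$ one has $(gv)^\prime(gv)=v^\prime g^\prime gv=v^\prime v=I_m$, so $gv\in\vnm$; for $v\in\vnm$ and $\gamma\in O(m)$ one has $(v\gamma)^\prime(v\gamma)=\gamma^\prime v^\prime v\gamma=\gamma^\prime\gamma=I_m$, so $v\gamma\in\vnm$. Since $\vnm$ is an embedded submanifold of $\frM_{n,m}$, a map into $\frM_{n,m}$ whose image lies in $\vnm$ is smooth into $\vnm$ if and only if it is smooth into $\frM_{n,m}$. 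Applying this to the two restrictions above yields the desired smoothness. The step that one must be careful with is precisely this last one, invoking the embedded-submanifold criterion; the rest is essentially the remark that polynomial maps between Euclidean spaces are $C^\infty$.
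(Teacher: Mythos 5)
Your proof is correct and follows essentially the same route as the paper: both arguments observe that the maps are polynomial (hence smooth) into the ambient matrix space and then invoke the embedded-submanifold criterion (Corollary 8.25 in \cite{Lee}) to conclude smoothness into $\vnm$. If anything, your treatment of the domain as an embedded product submanifold makes the \emph{joint} smoothness in both variables slightly more explicit than the paper's formulation.
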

\begin{proof} The statement follows, e.g., from  \cite [Corollary 8.25]{Lee}, since, by the above definition, $\vnm$ is an embedded submanifold of $\tilde\frM_{n,m}$ and the maps
\[
F_1, F_2 :\, \vnm \to  \tilde\frM_{n,m},\qquad  F_1: v \to g v, \quad  F_2: v \to v\gam,\]
are smooth. The smoothness of $F_1$ and $F_2$ is obvious  because matrix entries  of $gv$ and $v\gam$  depend polynomially on the matrix entries of  $v$.
\end{proof}


The Stiefel manifold $\vnm$ has several  diffeomorphic realizations. The next one is especially important.  Let us   fix a unit frame
$ v_0= \left[\begin{array} {c}  I_{m} \\  0 \end{array} \right]\in V_{n, m}$.
  The  isotropy subgroup of $O(n)$ at $ v_0$, that can be identified with  $O(n-m)$, is a closed embedded Lie subgroup of $O(n)$.
 Hence the left coset space  $O(n)/O(n-m)$  has a unique smooth manifold structure  such that the quotient map
 \be\label {ropy} \pi: O(n) \to  O(n)/O(n-m)\ee
 is a smooth submersion and  the map
  \be\label {ropy1} F: \, O(n)/O(n-m) \to \vnm, \qquad  F (g  O(n-m)) = gv_0,\ee
 is an $O(n)$-equivariant diffeomorphism; see  \cite[Lemma 9.23 and Theorems 9.22, 9.24]{Lee}. Thus we have the following
 \begin{proposition} \label{jheefe}
 The Stiefel manifold $\vnm$ is diffeomorphic to the quotient manifold $O(n)/O(n-m)$.
 \end{proposition}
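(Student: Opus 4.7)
The plan is to recognize $\vnm$ as a homogeneous space for $O(n)$ via the orbit--stabilizer construction. Concretely, I would use the smooth left action of $O(n)$ on $\vnm$ established in Proposition \ref{mmxzc}, and verify the three standard hypotheses: (i) the action is transitive, (ii) the isotropy subgroup at the reference frame $v_0=\left[\begin{array}{c} I_m\\ 0\end{array}\right]$ is precisely the block subgroup
\[
H=\left\{\left[\begin{array}{cc} I_m & 0 \\ 0 & h\end{array}\right]: h\in O(n-m)\right\},
\]
canonically identified with $O(n-m)$, and (iii) this $H$ is closed (in fact compact) in $O(n)$.

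For transitivity, given any $v\in\vnm$, I would extend its $m$ columns to an orthonormal basis of $\bbr^n$; the resulting $n\times n$ orthogonal matrix $g$ satisfies $gv_0=v$. For the stabilizer, a direct computation shows that $gv_0$ consists of the first $m$ columns of $g$, so $gv_0=v_0$ forces these to equal $e_1,\dots,e_m$; orthonormality of $g$ then forces the off-diagonal blocks to vanish and the lower-right $(n-m)\times(n-m)$ block to lie in $O(n-m)$. Closedness of $H$ is immediate from compactness of $O(n-m)$, and hence $H$ is an embedded Lie subgroup of $O(n)$.

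With these inputs in place, the quotient manifold theorem quoted from \cite[Theorem 9.22]{Lee} equips $O(n)/O(n-m)$ with a unique smooth structure for which the projection $\pi$ in (\ref{ropy}) is a smooth submersion, and the homogeneous space construction \cite[Theorem 9.24, Lemma 9.23]{Lee} then produces the $O(n)$-equivariant diffeomorphism $F$ of (\ref{ropy1}). One can also build $F$ directly: the orbit map $\Phi: O(n)\to \vnm$, $g\mapsto gv_0$, is smooth (by Proposition \ref{mmxzc}) and constant on left $H$-cosets, so by the universal property of smooth submersions it descends to a smooth bijection $F$ that is $O(n)$-equivariant by construction.

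The only place where anything must be checked as opposed to quoted is that $F$ has bijective differential at the base coset $eH$; equivariance then propagates invertibility of $dF$ to every point, making $F$ a smooth bijection of constant maximal rank, hence a diffeomorphism. This rank check reduces to the linear algebra fact that $T_{v_0}\vnm=\{y\in\frM_{n,m}: v_0'y+y'v_0=0\}$, which has dimension $nm-m(m+1)/2=d_m$ in agreement with (\ref{r5yyy}) and with $\dim O(n)-\dim O(n-m)$, so the orbit map $\Phi$ is already a submersion at the identity and the induced $F$ is a local (hence, by equivariance, global) diffeomorphism.
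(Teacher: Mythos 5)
Your proposal is correct and follows essentially the same route as the paper: verify that the smooth $O(n)$-action is transitive with stabilizer of $v_0$ equal to the block copy of $O(n-m)$, which is closed, and then invoke \cite[Lemma 9.23, Theorems 9.22, 9.24]{Lee} to obtain the $O(n)$-equivariant diffeomorphism $F$ of (\ref{ropy1}); the paper simply states these facts and cites Lee, while you additionally spell out the transitivity and stabilizer computations and sketch an optional direct rank argument consistent with (\ref{r5yyy}).
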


 We will need the following general statement.

 \begin{proposition} \label{jhfe} \cite [Proposition 7.17]{Lee}  Suppose $M$, $N$, and $P$ are smooth manifolds,
$\sig: M\to N$ is a surjective submersion, and $f: N\to P$ is any map. Then $f$ is smooth if and only if $f\circ \sig$ is smooth:
\end{proposition}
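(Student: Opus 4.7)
The proposal is to prove the two directions separately, with the forward direction being essentially trivial and the backward direction relying on the existence of local smooth sections of a submersion.

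For the forward implication, if $f$ is smooth then $f\circ\sigma$ is smooth as a composition of two smooth maps. This uses nothing beyond the definition of smoothness for compositions.

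For the reverse implication, the plan is to reduce the problem to a local one and exploit local sections. Fix an arbitrary point $n\in N$; I want to show $f$ is smooth on some neighborhood of $n$. Since $\sigma$ is surjective, pick $m\in M$ with $\sigma(m)=n$. The hypothesis that $\sigma$ is a submersion means $d\sigma_m$ has full rank, so the local submersion theorem (a consequence of the constant-rank/implicit function theorem) produces an open neighborhood $U$ of $n$ in $N$ together with a smooth local section $s:U\to M$ satisfying $\sigma\circ s=\mathrm{id}_U$ and $s(n)=m$. On $U$ we then have the identity
\begin{equation*}
f\big|_U \,=\, (f\circ\sigma)\circ s,
\end{equation*}
which exhibits $f|_U$ as a composition of two smooth maps and is therefore smooth. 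Since $n\in N$ was arbitrary and smoothness is a local property, $f$ is smooth on all of $N$.

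The main obstacle, and the only nontrivial ingredient, is the existence of the smooth local section $s$ through any prescribed preimage point. This is not obvious from the definition of a submersion as a map with surjective differential; it requires the local normal form theorem for submersions, which in suitable coordinates writes $\sigma$ as a projection $(x_1,\dots,x_k,x_{k+1},\dots,x_d)\mapsto(x_1,\dots,x_k)$, in which case the section $(x_1,\dots,x_k)\mapsto(x_1,\dots,x_k,0,\dots,0)$ is manifestly smooth. All other steps are routine, and this standard submersion fact is precisely what makes the proposition fall out.
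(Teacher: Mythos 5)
Your proof is correct: the paper does not prove this proposition but simply cites it from Lee's book, and your argument (trivial forward direction; backward direction via the local normal form for submersions, yielding a smooth local section $s$ through any prescribed preimage point so that $f|_U=(f\circ\sigma)\circ s$) is exactly the standard proof given in that reference. Nothing is missing.
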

\[
\xymatrix{&M \ar[d]_\sig \ar[rd]^{f\circ \,\sig} \\
&N \ar[r]_f          &P}
\]

\begin{proposition}\label{thmVNM} Let $ v_0= \left[\begin{array} {c}  I_{m} \\  0 \end{array} \right]\in V_{n, m}$.
 A function $f$ on $\vnm $ is smooth  if and only if a function  $f_0$ on $ O(n)$, defined by $f_0 (g)=
f(gv_0)$, is smooth.
\end{proposition}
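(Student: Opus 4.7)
The plan is to deduce this directly from Proposition \ref{jhfe} by exhibiting the evaluation map $g \mapsto gv_0$ as a surjective smooth submersion $O(n) \to V_{n,m}$, so that $f_0 = f \circ \sigma$ falls under that criterion with $M = O(n)$, $N = V_{n,m}$, $P = \bbr$.

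First I would combine the two structural facts already established. By Proposition \ref{jheefe}, the quotient map $\pi: O(n) \to O(n)/O(n-m)$ is a smooth surjective submersion and the map $F: O(n)/O(n-m) \to V_{n,m}$, $F(gO(n-m)) = gv_0$, is an $O(n)$-equivariant diffeomorphism. Setting $\sigma = F \circ \pi$, one has $\sigma(g) = gv_0$, and $\sigma$ is a surjective smooth submersion, being the composition of a surjective smooth submersion with a diffeomorphism. (Surjectivity is the transitivity of the $O(n)$-action on $V_{n,m}$; the submersion property at each point follows because $\pi$ is a submersion and $F$ has everywhere-bijective differential.)

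Next, I would observe that by definition $f_0 = f \circ \sigma$, so Proposition \ref{jhfe} applies verbatim:
\[
\xymatrix{&O(n) \ar[d]_\sigma \ar[rd]^{f_0 = f\circ \sigma} \\
&V_{n,m} \ar[r]_f          &\bbr}
\]
and yields the equivalence: $f \in C^\infty(V_{n,m})$ iff $f_0 = f \circ \sigma \in C^\infty(O(n))$. The ``only if'' direction is in any case immediate from Proposition \ref{mmxzc}, since $\sigma$ is smooth, so the real content is the ``if'' direction, which is exactly what the submersion criterion delivers.

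The main (indeed only) obstacle is verifying that $\sigma$ really is a submersion, but this is free once one invokes Proposition \ref{jheefe}: $\pi$ is a submersion by construction of the quotient manifold structure, and $F$ is a diffeomorphism, so $\sigma = F \circ \pi$ inherits the submersion property. No explicit coordinate computation on $V_{n,m}$ is required.
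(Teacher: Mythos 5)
Your argument is correct and coincides with the paper's own proof: both factor the evaluation map $g\mapsto gv_0$ as $F\circ\pi$ using Proposition \ref{jheefe}, note it is a surjective submersion, and then apply Proposition \ref{jhfe} with $M=O(n)$, $N=\vnm$. No gap; the only cosmetic difference is that the paper takes $P=\bbc$ rather than $\bbr$.
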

\begin{proof}  Let $\kappa: O(n)\to \vnm$, $\kappa (g)=gv_0$. We make use of Proposition \ref{jhfe}  with $M=O(n)$,
$N=\vnm$, $P=\bbc$, and $\sig=\kappa$. The result will be proved if we show that $\kappa$ is a submersion. We have $\kappa=F\circ \pi$, where $\pi$ is the
quotient map (\ref {ropy}) and $F$ is the diffeomorphism (\ref {ropy1}). Because both $\pi$  and $F$ are submersions, their composition is a
 submersion (see, e.g.,  \cite [Exercise 7.2]{Lee}), and the proof is complete.
\end{proof}

We  fix the Haar measure $dv$ on $\vnm$, which is left $O(n)$-invariant,
right $O(m)$-invariant, and
  normalized by \be\label{2.16} \sigma_{n,m}
 \equiv \intl_{\vnm} dv = \frac {2^m \pi^{nm/2}} {\gm
 (n/2)} \ee
\cite[p. 70]{Mu}. The notation $d_\ast v=\sig^{-1}_{n,m} dv$ is used for the corresponding probability measure. For any $v\in \vnm$,
\be\label{porrl} \intl_{\vnm} f(v) \, d_\ast v =\intl_{O(n)}  f(\gam v) \, d_\ast \gam,\ee
where $d_\ast \gam$ stands for the Haar probability measure on $O(n)$.

\begin{proposition} \label {lkutt} If $f$ belongs to $L^1 (\vnm)$ or $C^\infty (\vnm)$, then the average $ f_{ave} (v)=\int_{O(m)} f (v\a)\, d_*\a$ belongs to  $L^1 (\vnm)$ or $C^\infty (\vnm)$, respectively.
  \end{proposition}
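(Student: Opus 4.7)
The plan is to handle the $L^1$ assertion by Fubini plus the right $O(m)$-invariance of the Haar measure, and to handle the $C^\infty$ assertion by transferring everything to the compact Lie group $O(n)$ via Proposition~\ref{thmVNM} and invoking the convolution smoothing result (Proposition~\ref{biid}) already proved in the paper.

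For the $L^1$ case I would first write, for $f\in L^1(\vnm)$,
\[
\intl_{\vnm} |f_{ave}(v)|\, d_*v \;\le\; \intl_{\vnm}\intl_{O(m)} |f(v\a)|\, d_*\a\, d_*v
\;=\; \intl_{O(m)}\intl_{\vnm} |f(v\a)|\, d_*v\, d_*\a,
\]
where Fubini applies because the integrand is nonnegative and measurable. Since the Haar measure $d_*v$ on $\vnm$ is right $O(m)$-invariant (recall (\ref{porrl}) and the surrounding discussion), the inner integral equals $\|f\|_{L^1(\vnm)}$, giving $\|f_{ave}\|_{L^1(\vnm)}\le\|f\|_{L^1(\vnm)}$.

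For the $C^\infty$ case the key reduction is Proposition~\ref{thmVNM}: it suffices to show that the function $F(g) := f_{ave}(gv_0)$ is smooth on $O(n)$, where $v_0= \left[\begin{smallmatrix}  I_{m} \\  0 \end{smallmatrix}\right]$. The crucial observation is that right multiplication of $v_0$ by $\a\in O(m)$ can be realized as left multiplication by an element of $O(n)$; namely, $v_0\a = \tilde\a\, v_0$ with $\tilde\a := \left[\begin{smallmatrix} \a & 0 \\ 0 & I_{n-m}\end{smallmatrix}\right]\in O(n)$. Consequently
\[
F(g) \;=\; \intl_{O(m)} f(g\tilde\a v_0)\, d_*\a \;=\; \intl_{O(m)} f_0(g\tilde\a)\, d_*\a,
\]
where $f_0(g)=f(gv_0)$ is smooth on $O(n)$ by Proposition~\ref{thmVNM}. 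Writing $\a \mapsto \tilde\a^{-1}$ as a change of variable and pushing forward the probability measure $d_*\a$ from $O(m)$ to a Radon measure $\mu$ on $O(n)$ supported on the embedded compact subgroup $\{\tilde\a:\a\in O(m)\}$, the right-hand side becomes the convolution $(f_0 \ast \mu)(g)$ on $O(n)$. By Proposition~\ref{biid} this convolution belongs to $C^\infty(O(n))$, so $F\in C^\infty(O(n))$, and a second application of Proposition~\ref{thmVNM} yields $f_{ave}\in C^\infty(\vnm)$.

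I do not foresee a serious obstacle: the only non-routine step is the identity $v_0\a=\tilde\a v_0$, which converts the right $O(m)$-action on frames into a left $O(n)$-action so that the standard convolution machinery on the compact group $O(n)$ becomes available. Everything else is a direct citation of results already established in the main text and the appendix.
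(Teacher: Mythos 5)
Your proof is correct, but for the smoothness part you take a genuinely different route from the paper. The paper argues directly on the Stiefel manifold: by Proposition~\ref{mmxzc} the action map $(v,\a)\mapsto v\a$ is smooth, hence $f(v\a)$ is smooth on $\vnm\times O(m)$, and smoothness of $f_{ave}$ follows by differentiating under the integral over the compact parameter space $O(m)$ (this last step is left implicit). You instead lift everything to the group $O(n)$ via Proposition~\ref{thmVNM}, use the identity $v_0\a=\tilde\a v_0$ with $\tilde\a=\left[\begin{smallmatrix}\a&0\\0&I_{n-m}\end{smallmatrix}\right]$ to convert the right $O(m)$-average into a convolution $f_0\ast\mu$ with the pushforward of Haar measure supported on the embedded copy of $O(m)$ in $O(n)$, and then quote Proposition~\ref{biid}; this is exactly the convolution mechanism the paper itself uses for the Funk transforms in the proof of Lemma~\ref{aqr1}, so your argument is fully consistent with the paper's machinery. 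What each approach buys: yours delegates the differentiation-under-the-integral issue entirely to the already proved Proposition~\ref{biid}, so nothing new has to be justified on $\vnm$, at the cost of the (easy but essential) observation $v_0\a=\tilde\a v_0$ and a change of variable to match the convolution convention $(f\ast\mu)(x)=\int_G f(xy^{-1})\,d\mu(y)$; the paper's argument is shorter and stays on $\vnm\times O(m)$ without passing through $O(n)$. Your $L^1$ argument (Fubini plus right $O(m)$-invariance of $d_*v$, giving $\|f_{ave}\|_{L^1}\le\|f\|_{L^1}$) is the same as the paper's, just spelled out.
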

\begin{proof} The $L^1$ statement holds by Fubini's theorem. To prove the $C^\infty$ statement, we observe that by  Proposition \ref{mmxzc}, the map $\rho:\, (v, \a) \to v\a$ is smooth. Hence $f (v\a)= (f \circ \rho)(v, \a)$ is  smooth  on $\vnm \times O(m)$, and therefore $f_{ave}$ is a smooth   right $O(m)$-invariant function on $\vnm$.
 \end{proof}

Below we give another  characterization of smooth functions on $\vnm$, which is probably new. Recall that in the case $m=1$, it is  customarily  to define  $C^\infty (\sn) $ as the
space of restrictions onto $\sn$ of $C^\infty $ functions on $\rn \setminus \{0\}$, so that
$f\in C^\infty (\sn) $ if and only if the extended function $\tilde f : x \to f (x/|x|)$ belongs to $C^\infty (\rn \setminus \{0\})$.
The following theorem allows us to proceed in a similar way if $m>1$, when the radial component is an element of the set $\Om_m$ of positive definite symmetric $m\times m$ matrices.

\begin{theorem}\label{l2.3}  \cite{FK, Herz,  Mu} {\rm (Matrix Polar Decomposition)} If $n  \ge  m$, then every matrix  $x \in  \tilde\frM_{n,m}$  can be uniquely represented as
\be\label{pol}
x=vr^{1/2}, \qquad v \in \vnm,   \qquad
r=x'x \in\Omega_m,\ee
and $dx=2^{-m} |r|^{(n-m-1)/2} dr dv$.
\end{theorem}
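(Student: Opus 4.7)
The statement has two independent contents: the pointwise bijective decomposition and the Jacobian formula. The plan is to treat them in turn.

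\textbf{Bijection.} Since $x \in \tilde\frM_{n,m}$ has rank $m$, the matrix $r = x'x$ is symmetric and positive definite (because $y'x'xy = |xy|^2 > 0$ for every nonzero $y \in \bbr^m$), hence belongs to $\Om_m$. Its unique positive definite square root $r^{1/2} \in \Om_m$ is invertible, so $v := xr^{-1/2}$ satisfies $v'v = r^{-1/2}(x'x)r^{-1/2} = I_m$ and yields $x = vr^{1/2}$ with $v \in \vnm$. Uniqueness follows from $x = \tilde v\,\tilde r^{1/2} \Rightarrow \tilde r = x'x = r$, hence $\tilde v = v$. Thus $\Phi: \vnm \times \Om_m \to \tilde\frM_{n,m}$, $(v,r) \mapsto vr^{1/2}$, is a smooth bijection with smooth inverse $x \mapsto (x(x'x)^{-1/2}, x'x)$.

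\textbf{Jacobian.} The plan is to compute $\Phi^\ast(dx)$ via differential forms. Writing $dx = (dv)r^{1/2} + v\,d(r^{1/2})$ and pre-multiplying by the orthogonal $n \times n$ matrix $[v, v^\perp]$ (which preserves Lebesgue measure on $\frM_{n,m}$), the wedge of all entries of $dx$ splits into the wedge of the upper $m \times m$ block $M := v'dx = v'(dv)\,r^{1/2} + d(r^{1/2})$ and the lower $(n-m) \times m$ block $B := (v^\perp)'dx = (v^\perp)'(dv)\,r^{1/2}$. The lower block contributes a Jacobian factor $\det(r^{1/2})^{n-m} = |r|^{(n-m)/2}$ relative to the wedge of the $m(n-m)$ entries of $(v^\perp)'(dv)$. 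The upper block requires more work: $v'(dv)$ is antisymmetric (by $d(v'v) = 0$), $d(r^{1/2})$ is symmetric, and these together provide the full $m^2$ independent forms of $M$. Diagonalizing $r^{1/2} = \alpha\,\diag(\mu_1, \ldots, \mu_m)\,\alpha'$ with $\alpha \in O(m)$ and rotating the antisymmetric/symmetric coordinates accordingly, one finds that the Jacobian of the assignment $(v'(dv), d(r^{1/2})) \mapsto M$ equals $\prod_{i<j}(\mu_i + \mu_j)$.

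\textbf{The crucial cancellation.} The squaring map $r^{1/2} \mapsto r$ on $\Om_m$ has differential $T \mapsto r^{1/2}T + Tr^{1/2}$, whose determinant on $\sym_m$, read off in the eigenbasis of $r^{1/2}$, is $2^m \prod_i \mu_i \cdot \prod_{i<j}(\mu_i + \mu_j) = 2^m |r|^{1/2} \prod_{i<j}(\mu_i + \mu_j)$. Hence $dr$ and $d(r^{1/2})$ are related by exactly this factor; when the upper-block Jacobian above is re-expressed in terms of $dr$, the $\prod_{i<j}(\mu_i + \mu_j)$ contributions cancel perfectly, leaving the clean factor $2^{-m} |r|^{-1/2}\,dr$. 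Combining with the lower-block factor $|r|^{(n-m)/2}$ yields $dx = 2^{-m} |r|^{(n-m-1)/2}\,dr\,dv$, where $dv$ is the wedge of the remaining $m(m-1)/2 + m(n-m) = d_m$ forms carried by $v'(dv)$ and $(v^\perp)'(dv)$, which coincides with the $O(n)$-invariant Haar measure on $\vnm$.

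\textbf{Main obstacle and cross-check.} The constant $2^{-m}$ is cleanly cross-verified by integrating $f(x) = e^{-\tr(x'x)}$ against both sides: the left side equals $\pi^{nm/2}$, while the right side evaluates to $2^{-m}\,\sigma_{n,m}\,\Gam_m(n/2) = \pi^{nm/2}$ by (\ref{2.16}) and (\ref{2.4}). The main technical obstacle is precisely the cancellation above: without recognizing that the eigenvalue-sum factor $\prod_{i<j}(\mu_i + \mu_j)$ arising from the interplay between antisymmetric and symmetric pieces in the upper block is inverse to the corresponding factor in the Jacobian of $r^{1/2} \mapsto r$, the final formula would acquire a spurious symmetric-function weight and would not reduce to a power of $\det r$. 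This is the step where the proof genuinely requires eigenbasis bookkeeping rather than abstract symmetry, and the Gaussian test provides welcome insurance against bookkeeping errors in the constant.
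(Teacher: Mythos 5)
Your argument is correct, but note that the paper itself contains no proof to compare against: Theorem \ref{l2.3} is quoted from \cite{FK, Herz, Mu}, so the only meaningful comparison is with those classical sources. Measured against them, your route is a genuine variant. Muirhead, for instance, obtains the density $2^{-m}\det(r)^{(n-m-1)/2}$ by passing through the triangular (QR) decomposition $x=h_1t$, where the two Jacobians $\prod_i t_{ii}^{\,n-i}$ and $2^m\prod_i t_{ii}^{\,m+1-i}$ (for $r=t't$) combine immediately, so no eigenvalue bookkeeping ever appears. You instead differentiate $x=vr^{1/2}$ directly, and the crux is exactly the point you isolate: in the eigenbasis of $r^{1/2}$ the pair $(M_{ij},M_{ji})$ depends on $\bigl((v'dv)_{ij},\,d(r^{1/2})_{ij}\bigr)$ through a $2\times2$ matrix of determinant $\mu_i+\mu_j$, while the squaring map on $\sym_m$ has determinant $\prod_{i\le j}(\mu_i+\mu_j)=2^m|r|^{1/2}\prod_{i<j}(\mu_i+\mu_j)$, and the symmetric-function factors cancel; I checked both determinants, the lower-block factor $|r|^{(n-m)/2}$, and the resulting constant, and they are right. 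Two steps you pass over quickly are harmless but worth stating explicitly: left multiplication of $dx$ by $[v,v^\perp]$, although this matrix depends on the point, is at each point an orthogonal linear substitution of the $nm$ one-forms and so preserves the top-degree form up to sign; and your Gaussian computation does more than cross-check the constant --- it is what identifies the invariant form $(\wedge_{i<j}(v'dv)_{ij})\wedge(\wedge\,(v^\perp)'dv)$ with the measure $dv$ normalized as in (\ref{2.16}), since invariance alone fixes $dv$ only up to a constant factor. The triangular route is shorter; yours has the merit of working directly in the variables $(v,r)$ of (\ref{pol}) without an intermediate Cholesky factor.
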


By this theorem, there is a one-to-one correspondence between functions $f$ on $\vnm$ and their homogeneous extensions
\[\tilde f(x) =f (x (x'x)^{-1/2}), \qquad x \in \tilde\frM_{n,m}.\]

\begin{lemma} \label{wnvp} The  relations $f\in C^\infty (\vnm) $ and $\tilde f \in C^\infty (\tilde\frM_{n,m})$ are equivalent.
\end{lemma}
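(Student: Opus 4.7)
The plan is to reduce everything to smoothness of the polar projection
\[
\sigma:\tilde\frM_{n,m}\to \vnm,\qquad \sigma(x)=x(x'x)^{-1/2},
\]
which gives $\tilde f=f\circ\sigma$, together with the fact that $\sigma\big|_{\vnm}=\id_{\vnm}$, so that $\tilde f\big|_{\vnm}=f$. Once $\sigma$ is known to be smooth (as a map into the embedded submanifold $\vnm$), both implications follow from general manifold yoga.

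First I would establish the key technical lemma: the square-root map $r\mapsto r^{1/2}$ (and hence $r\mapsto r^{-1/2}$) is a diffeomorphism of the open cone $\Omega_m\subset\sym_m$. For this, consider $\Phi:\Omega_m\to\Omega_m$, $\Phi(s)=s^2$. Its differential $d\Phi_s(h)=sh+hs$ is the Sylvester operator, which is invertible whenever $s\in\Omega_m$ (because $s$ has positive spectrum, so $-s$ and $s$ have disjoint spectra). By the inverse function theorem $\Phi$ is a local diffeomorphism, and since it is a bijection of $\Omega_m$ it is a global diffeomorphism; its inverse is the smooth square root, and then $r\mapsto r^{-1/2}$ is smooth by composition with matrix inversion on $GL(m,\bbr)$. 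Combining this with smoothness of $x\mapsto x'x$ (a polynomial map $\tilde\frM_{n,m}\to\Omega_m$) and of matrix multiplication, the composite $\sigma$ is smooth as a map $\tilde\frM_{n,m}\to\tilde\frM_{n,m}$. Since $\vnm$ is an embedded submanifold of $\tilde\frM_{n,m}$ and the image of $\sigma$ lies in $\vnm$, standard characterization of smooth maps into embedded submanifolds (\cite[Corollary~5.30]{Lee}-type result, already invoked in Proposition~\ref{mmxzc}) upgrades $\sigma$ to a smooth map $\tilde\frM_{n,m}\to\vnm$.

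With $\sigma$ smooth, the two directions of the lemma are immediate. If $f\in C^\infty(\vnm)$, then $\tilde f=f\circ\sigma$ is smooth on $\tilde\frM_{n,m}$ as a composition of smooth maps. Conversely, suppose $\tilde f\in C^\infty(\tilde\frM_{n,m})$. For $v\in\vnm$ one has $v'v=I_m$, so $\sigma(v)=v$ and $\tilde f(v)=f(v)$; that is, $f$ is the pullback of $\tilde f$ under the smooth inclusion $\iota:\vnm\hookrightarrow\tilde\frM_{n,m}$, hence $f=\tilde f\circ\iota\in C^\infty(\vnm)$.

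The main obstacle is justifying the smoothness of $r\mapsto r^{1/2}$ on $\Omega_m$ in a clean coordinate-free way; power-series or spectral-decomposition arguments are awkward because eigenvalues are only Lipschitz (not smooth) functions of the matrix entries when multiplicities collide. The inverse-function-theorem route sketched above avoids this pitfall and is the route I would follow. Everything else—the polynomial nature of $x\mapsto x'x$, the embedded-submanifold character of $\vnm$ (from the proof of Proposition \ref{jheefe}), and the restriction/pullback properties of smooth functions—has already been recorded in the Appendix.
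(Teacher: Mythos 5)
Your proof is correct, but it takes a genuinely different route from the paper's. The paper proves the lemma by invoking Proposition \ref{jhfe} (the surjective-submersion criterion for smoothness of a factored map) with $\sigma(x)=x(x'x)^{-1/2}$, so the entire burden there is to show that $\sigma:\tilde\frM_{n,m}\to\vnm$ is a surjective submersion; the smoothness of $\sigma$ and the full-rank property of its differential are asserted rather than computed. You instead exploit the retraction structure $\sigma|_{\vnm}=\mathrm{id}$: the direction $\tilde f\in C^\infty(\tilde\frM_{n,m})\Rightarrow f\in C^\infty(\vnm)$ needs nothing beyond smoothness of the inclusion $\iota:\vnm\hookrightarrow\tilde\frM_{n,m}$ of an embedded submanifold, since $f=\tilde f\circ\iota$, while the converse needs only that $\sigma$ is smooth as a map into $\vnm$, which you obtain from smoothness into the ambient matrix space plus the embedded-submanifold criterion already used in Proposition \ref{mmxzc}. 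Your inverse-function-theorem argument for the smoothness of $r\mapsto r^{1/2}$ on $\Omega_m$ (invertibility of the Sylvester map $h\mapsto sh+hs$ on $\sym_m$ for $s\in\Omega_m$, which follows from disjointness of the spectra of $s$ and $-s$ together with uniqueness forcing the solution to be symmetric) is sound and in fact supplies the detail behind the paper's ``Clearly, $\sigma$ is smooth.'' What you give up relative to the paper is the submersion property of $\sigma$ itself, which is not needed for this lemma but is the uniform mechanism (Proposition \ref{jhfe}) that the Appendix also uses for Propositions \ref{thmVNM} and \ref{wnvpc}; what you gain is a more elementary argument that bypasses the constant-rank/submersion machinery and the unverified full-rank claim in the paper's proof.
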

\begin{proof}
 We make use of Proposition \ref{jhfe} with $M=\tilde\frM_{n,m}$, $N=\vnm$, $P=\bbc$, and $\sig (x)= x (x'x)^{-1/2}$. It suffices to show that $\sig :  \tilde\frM_{n,m} \to \vnm $ is a surjective submersion.
 Clearly, $\sig$ is smooth and its differential, as a linear map between  tangent spaces at $x \in \tilde\frM_{n,m}$ and $v=\sig (x) \in \vnm$, has full rank, which is equal to $\dim \vnm$. Thus $\sig$ is a smooth surjective map of constant rank, and therefore (use, e.g. \cite [Theorem 7.15]{Lee}) it is a submersion. This completes the proof.
\end{proof}

\begin{remark} \label {kjutr}  By Lemma \ref{wnvp}, we can realize $C^\infty (\vnm) $ as the space of all functions $f$ on $\vnm$, for which $\tilde f \in C^\infty (\tilde\frM_{n,m})$ in the usual sense, as on $\bbr^{nm}$.
 This remark plays a key role in the paper because it allows us to define differential operators on $\vnm$ via homogeneous continuation.
\end{remark}

 \subsection{Grassmann manifolds}
We denote by $G_{n,m}$ the Grassmann manifold of  $m$-dimensional linear subspaces of $\bbr^n$.
 There exist several diffeomorphic realizations of $G_{n,m}$.
see, e.g., \cite [pp. 22, 234, 238 (Problem 9-14)]{Lee}. The Lie group $G=O(n)$ acts on $G_{n,m}$ smoothly  and transitively, and therefore $G_{n,m}$ is a homogeneous $G$-space.
If $\xi_0 =\span (e_1, \ldots, e_m) \in G_{n,m}$ is the coordinate subspace of $\rn$,
then $G_0=O(n-m) \times O(m)$ is the isotropy group of $\xi_0$,  which is a closed embedded Lie subgroup of $G$ (see, e.g., \cite[Lemma 9.23]{Lee}).  Hence Theorem 9.24 from \cite{Lee} yields the following

\begin{proposition} \label {dveedf} The maps
\be\label {eedf} E_1 : G/G_0  \to G_{n,m}, \qquad  E_2 : G/G_0  \to G_{n,n-m}\ee
 are $G$-equivariant diffeomorphisms.
 \end{proposition}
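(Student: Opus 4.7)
My plan is to derive Proposition \ref{dveedf} as a direct application of the homogeneous space construction theorem (Theorem 9.24 in \cite{Lee}), which states that if a Lie group $G$ acts smoothly and transitively on a smooth manifold $M$ and $H$ is the isotropy subgroup at some chosen point $p_0 \in M$, then the orbit map descends to a $G$-equivariant diffeomorphism from the quotient manifold $G/H$ onto $M$. The task therefore reduces to verifying, for each of the two manifolds $G_{n,m}$ and $G_{n,n-m}$, three items: smoothness of the $O(n)$-action, transitivity, and identification of the stabilizer of the chosen basepoint with $G_0=O(n-m)\times O(m)$.

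Smoothness of the $O(n)$-actions on $G_{n,m}$ and $G_{n,n-m}$ follows from the fact that these Grassmannians carry a canonical smooth structure (coming, for example, from realizing $G_{n,m}$ as the quotient of $V_{n,m}$ by the smooth free right action of $O(m)$) under which the map $(g,\xi)\mapsto g\xi$ is smooth; this is a standard fact and I would cite it. Transitivity is elementary: given any $\xi\in G_{n,m}$, an orthonormal basis for $\xi$ extends to an orthonormal basis of $\rn$, and the resulting matrix $g\in O(n)$ sends $\xi_0=\mathrm{span}(e_1,\dots,e_m)$ to $\xi$; the same argument applies verbatim to $G_{n,n-m}$ with $\xi_0^\perp=\mathrm{span}(e_{m+1},\dots,e_n)$ as basepoint.

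For the isotropy computation, an orthogonal matrix $g$ fixes $\xi_0$ setwise iff it preserves both $\xi_0$ and its orthogonal complement $\xi_0^\perp$, iff it is block diagonal with blocks in $O(m)$ (acting on $\xi_0$) and $O(n-m)$ (acting on $\xi_0^\perp$). This identifies the stabilizer of $\xi_0$ in $G_{n,m}$ with $G_0\cong O(n-m)\times O(m)$. Crucially, an orthogonal $g$ preserves $\xi_0$ iff it preserves $\xi_0^\perp$, so the \emph{same} subgroup $G_0$ is the stabilizer of $\xi_0^\perp$ in $G_{n,n-m}$. Since $G_0$ is the preimage of $\{(I_{n-m},I_m)\}$ under the smooth projection $O(n)\to O(n-m)\times O(m)$ restricted to block diagonal matrices, it is closed and embedded in $O(n)$, hence a Lie subgroup in the sense required by the cited theorem.

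Applying Theorem 9.24 of \cite{Lee} to each action then produces $G$-equivariant diffeomorphisms $O(n)/G_0\to G_{n,m}$ and $O(n)/G_0\to G_{n,n-m}$, which are precisely $E_1$ and $E_2$. There is no real obstacle in the argument; the only point deserving mild care is the bookkeeping that confirms $G_0$ simultaneously serves as the isotropy for both basepoints, which ultimately rests on the orthogonal complement being preserved by every element of $O(n)$ that preserves the original subspace.
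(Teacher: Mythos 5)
Your proposal is correct and follows essentially the same route as the paper: the paper also obtains both diffeomorphisms by applying Theorem 9.24 of \cite{Lee} to the smooth transitive $O(n)$-action, with $G_0=O(n-m)\times O(m)$ as the (closed, embedded) isotropy subgroup of the basepoint, noting that the same $G_0$ stabilizes both $\xi_0$ and $\xi_0^\perp$. Your write-up merely spells out the verification of the hypotheses (smoothness, transitivity, isotropy computation) that the paper cites or asserts briefly.
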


 By this proposition,   $G_{n,m}$ and $G_{n,n-m}$ are  diffeomorphic and every function $f_1$ on $G_{n,m}$ can be identified with a function $f_2$ on $G_{n,n-m}$,  so that
 \[f_1 (\xi)=f_2 (\xi^\perp), \qquad f_2(\eta)=f_1 (\eta^\perp); \qquad  \xi \in G_{n,m}, \quad  \eta \in G_{n,n-m}.\]
In other words,
\[
f_1 (\xi)=f_2(E_2 (E_1^{-1} \xi), \qquad f_2(\eta)=f_1(E_1 (E_2^{-1} \eta).\]
This reasoning gives the following
\begin{proposition}\label{mofoth} $f_1\in C^\infty (G_{n,m})$ if and only if $f_2\in C^\infty (G_{n,n-m})$.
\end{proposition}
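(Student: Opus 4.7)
The plan is to reduce the claim to the diffeomorphism statement of Proposition \ref{dveedf}. Since both $E_1 : G/G_0 \to G_{n,m}$ and $E_2 : G/G_0 \to G_{n,n-m}$ are $G$-equivariant diffeomorphisms, the composition
\[
\Phi := E_2 \circ E_1^{-1} : G_{n,m} \to G_{n,n-m}
\]
is a diffeomorphism. A direct computation on orbit representatives (sending $\xi_0 = \mathrm{span}(e_1,\ldots,e_m)$ to $\xi_0^\perp = \mathrm{span}(e_{m+1},\ldots,e_n)$ and using $G$-equivariance) identifies $\Phi$ with the orthogonal-complement map $\xi \mapsto \xi^\perp$.

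Given $f_1$ on $G_{n,m}$ and $f_2$ on $G_{n,n-m}$ related by $f_1(\xi) = f_2(\xi^\perp)$, I would simply write $f_1 = f_2 \circ \Phi$ and $f_2 = f_1 \circ \Phi^{-1}$. Smoothness of a composition of smooth maps then gives one direction (if $f_2 \in C^\infty(G_{n,n-m})$ then $f_1 = f_2 \circ \Phi \in C^\infty(G_{n,m})$), and applying the same argument with $\Phi^{-1}$ in place of $\Phi$ gives the converse.

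There is no real obstacle here: the whole content of the proposition is already absorbed into Proposition \ref{dveedf}, and the argument is a one-line consequence of the fact that pullback by a diffeomorphism is an isomorphism of $C^\infty$-function spaces. The only thing worth spelling out is the identification of $\Phi$ with $\xi \mapsto \xi^\perp$, which follows because $E_1(gG_0) = g\xi_0$ and $E_2(gG_0) = g\xi_0^\perp$, so $\Phi(g\xi_0) = g\xi_0^\perp = (g\xi_0)^\perp$ by orthogonality of $g \in O(n)$.
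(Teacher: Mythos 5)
Your argument is correct and is essentially the paper's own: the text preceding Proposition \ref{mofoth} likewise writes $f_1(\xi)=f_2\bigl(E_2(E_1^{-1}\xi)\bigr)$ and $f_2(\eta)=f_1\bigl(E_1(E_2^{-1}\eta)\bigr)$ and deduces the equivalence from the fact that $E_1$ and $E_2$ are $G$-equivariant diffeomorphisms (Proposition \ref{dveedf}). Your explicit identification of $\Phi=E_2\circ E_1^{-1}$ with $\xi\mapsto\xi^\perp$ via equivariance is a harmless extra detail that the paper leaves implicit.
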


  \begin{proposition} \label{wnvpc} Given a function $g$ on $G_{n,m}$, let
\[ g_0 (\gam) = g (\gam \xi_0), \qquad \gam \in O(n),\qquad  \xi_0 =\span (e_1, \ldots, e_m).\]
 Then $g_0 \in C^\infty (O(n))$ if and only if    $ g\in C^\infty (G_{n,m})$.
\end{proposition}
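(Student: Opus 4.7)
The plan is to mimic the proof of Proposition \ref{thmVNM} almost verbatim, replacing the Stiefel quotient $O(n)/O(n-m)$ by the Grassmannian quotient $O(n)/G_0$, where $G_0 = O(n-m)\times O(m)$ is the isotropy subgroup of $\xi_0$. Define the orbit map
\[
\kappa:\, O(n)\to G_{n,m}, \qquad \kappa(\gamma)=\gamma\xi_0,
\]
so that $g_0 = g\circ\kappa$. The statement then reduces, via Proposition \ref{jhfe} (applied with $M=O(n)$, $N=G_{n,m}$, $P=\mathbb{C}$, $\sigma=\kappa$, $f=g$), to the single claim that $\kappa$ is a surjective submersion.

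First I would note that $\kappa$ is surjective because $O(n)$ acts transitively on $G_{n,m}$. For the submersion property, I would factor
\[
\kappa = E_1\circ\pi,
\]
where $\pi:\, O(n)\to O(n)/G_0$ is the canonical quotient map and $E_1:\, O(n)/G_0\to G_{n,m}$ is the $G$-equivariant diffeomorphism of Proposition \ref{dveedf}. The diffeomorphism $E_1$ is trivially a submersion. The quotient map $\pi$ is a smooth submersion by the standard fact that the quotient of a Lie group by a closed embedded Lie subgroup carries a unique smooth structure making the projection a submersion (this is exactly the ingredient used in the derivation of Propositions \ref{jheefe} and \ref{dveedf}). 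Compositions of submersions are submersions, so $\kappa$ is a submersion.

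Having established that $\kappa$ is a surjective submersion, Proposition \ref{jhfe} gives $g\in C^\infty(G_{n,m})$ if and only if $g_0 = g\circ\kappa \in C^\infty(O(n))$, which is exactly the claim. I expect no real obstacle here: the argument is a direct analogue of the one used for $\vnm$ in Proposition \ref{thmVNM}, and the only step that requires care is keeping track of the right isotropy subgroup $G_0 = O(n-m)\times O(m)$ (as opposed to just $O(n-m)$ in the Stiefel case), which reflects the fact that the Grassmannian forgets the choice of orthonormal basis within the subspace.
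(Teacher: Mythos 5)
Your proof is correct and follows essentially the same route as the paper: both reduce the claim via Proposition \ref{jhfe} to showing that the orbit map $\kappa(\gamma)=\gamma\xi_0$ is a submersion, and both establish this by factoring $\kappa=E_1\circ\pi$ with $\pi:O(n)\to O(n)/G_0$ the quotient submersion and $E_1$ the diffeomorphism of Proposition \ref{dveedf}. No gaps.
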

\begin{proof}  Let $\kappa: O(n)\to G_{n,m}$, $\kappa (g)=g\xi_0$. We make use of Proposition \ref{jhfe}  with $M=O(n)$, $N=G_{n,m}$, $P=\bbc$, and $\sig=\kappa$. The result will be
proved if we show that $\kappa$ is a submersion. Denote $G=O(n)$, $G_0=O(n-m)\times O(m)$.
We have $\kappa=E_1\circ \pi$, where $\pi :  G\to G/G_0$ is the quotient map  and $E_1$ is the diffeomorphism from (\ref{eedf}).
Because both $\pi$  and $E_1$ are submersions, their composition is a submersion, and the proof is complete.
\end{proof}

 Every subspace $\xi \in G_{n,m}$ is uniquely determined by its orthonormal basis $v \in \vnm$. Because all  bases of the form $v\gam$, $\gam \in O(m)$, define the same subspace, we can
 realize $G_{n,m}$  as a quotient space
\[G_{n,m} \simeq V_{n,m}/O(m). \]

 \begin{proposition} \label{wnsvpc} The map
\[E_3: \,  G_{n,m} \to V_{n,m}/O(m)\]
is an $O(n)$-equivariant diffeomorphism.
\end{proposition}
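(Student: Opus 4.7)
The plan is to realize both $G_{n,m}$ and $V_{n,m}/O(m)$ as the same homogeneous space $O(n)/G_0$, with $G_0=O(n-m)\times O(m)$, and then recognize $E_3$ as the identity under these identifications.

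First I would equip $V_{n,m}/O(m)$ with a smooth manifold structure. By Proposition \ref{mmxzc}, the right action $V_{n,m}\times O(m)\to V_{n,m}$, $(v,\gamma)\mapsto v\gamma$, is smooth; it is free because $v\gamma=v$ combined with $v'v=I_m$ forces $\gamma=I_m$; and it is proper because $O(m)$ is compact. The quotient manifold theorem then furnishes a unique smooth structure on $V_{n,m}/O(m)$ for which the quotient map $q:V_{n,m}\to V_{n,m}/O(m)$ is a smooth surjective submersion. With this structure, $E_3:\xi\mapsto q(v)$ (for any orthonormal basis $v$ of $\xi$) is unambiguously defined, since two bases of the same subspace differ by an element of $O(m)$, and it is clearly bijective and $O(n)$-equivariant.

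Next I would translate to group-theoretic language. By Proposition \ref{jheefe}, the assignment $gO(n-m)\mapsto gv_0$ identifies $O(n)/O(n-m)$ with $V_{n,m}$. A direct calculation shows $v_0\gamma=\tilde\gamma v_0$ with $\tilde\gamma=\mathrm{diag}(\gamma,I_{n-m})$, so the right $O(m)$-action on $V_{n,m}$ pulls back to $[g]\mapsto[g\tilde\gamma]$ on $O(n)/O(n-m)$. Since $\tilde\gamma$ and $O(n-m)=\{\mathrm{diag}(I_m,\rho)\}$ commute inside $O(n)$ and together generate exactly $G_0$, the $O(m)$-orbit of $[g]$ equals the left coset $gG_0$. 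Consequently $V_{n,m}/O(m)$ is $O(n)$-equivariantly diffeomorphic to $O(n)/G_0$. But Proposition \ref{dveedf} gives the parallel $O(n)$-equivariant diffeomorphism $E_1:O(n)/G_0\to G_{n,m}$, so $E_3$ fits into a commutative triangle of $O(n)$-equivariant diffeomorphisms that identify the class of $\xi_0$ with $q(v_0)$ and with the identity coset. By transitivity of the $O(n)$-action on all three spaces, $E_3$ is a diffeomorphism.

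The only point that requires care is verifying that the $O(m)$-action on left cosets by $[g]\mapsto[g\tilde\gamma]$ is well defined and that $\tilde O(m)\cdot O(n-m)=G_0$; both reduce to the observation that $\tilde\gamma^{-1}\mathrm{diag}(I_m,\rho)\tilde\gamma=\mathrm{diag}(I_m,\rho)$, so $\tilde\gamma$ centralizes $O(n-m)$ inside $O(n)$. This is a one-line matrix computation, so I do not anticipate any serious obstacle; the substantive input is purely the quotient manifold theorem together with Propositions \ref{jheefe} and \ref{dveedf} already established in the appendix.
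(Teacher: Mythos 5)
Your argument is correct, and its skeleton is the same as the paper's: identify both sides with the homogeneous space $O(n)/G_0$, $G_0=O(n-m)\times O(m)$, using Propositions \ref{jheefe} and \ref{dveedf}. The difference is in how the quotient $V_{n,m}/O(m)$ is handled. The paper simply declares $V_{n,m}/O(m)$ to be a homogeneous $O(n)$-space with isotropy $G_0$ at $v_0O(m)$ and invokes Lee's Theorem 9.24 to get the equivariant diffeomorphism with $G/G_0$; the smooth structure on the quotient and the smoothness of the $O(n)$-action are taken for granted. You instead build the smooth structure from the quotient manifold theorem (the right $O(m)$-action is smooth by Proposition \ref{mmxzc}, free since $v\gamma=v$ and $v'v=I_m$ force $\gamma=I_m$, and proper by compactness), and then identify the $O(m)$-orbits on $O(n)/O(n-m)$ with the left $G_0$-cosets by the explicit computation $v_0\gamma=\tilde\gamma v_0$, $\tilde\gamma=\mathrm{diag}(\gamma,I_{n-m})$, together with the fact that $\tilde\gamma$ centralizes $\mathrm{diag}(I_m,\rho)$. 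That buys a more self-contained proof which makes explicit the step the paper leaves implicit. The one place where you should add a line is the assertion "consequently $V_{n,m}/O(m)$ is $O(n)$-equivariantly diffeomorphic to $O(n)/G_0$": the coset computation gives only a set-theoretic equivariant bijection, and smoothness in both directions still needs an argument — e.g. apply Proposition \ref{jhfe} twice, once to the submersion $O(n)\to O(n)/G_0$ (the map $g\mapsto q(gv_0)$ is smooth and constant on $G_0$-cosets) and once to the submersion $q:V_{n,m}\to V_{n,m}/O(m)$ (the composite $V_{n,m}\simeq O(n)/O(n-m)\to O(n)/G_0$ is smooth and $O(m)$-invariant). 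This is exactly the content that Lee's Theorem 9.24 packages for the paper, so the gap is routine rather than substantive.
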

\begin{proof} Let $G=O(n)$. By Proposition \ref{dveedf}, it suffices to prove this statement with $G_{n,m}$ replaced by  $G/G_0$,  $G_0=O(n-m) \times O(m)$.  The manifold $V_{n,m}/O(m)$
is a homogeneous $G$-space with the isotropy subsgroup $G_0$  of the element $v_0 O(m)\in V_{n,m}/O(m)$, $ v_0= \left[\begin{array} {c}  I_{m} \\  0 \end{array} \right]\in V_{n, m}$.
 Hence, by Theorem 9.24 from \cite{Lee},  there exists a $G$-equivariant diffeomorphism  between $G/G_0$  and  $V_{n,m}/O(m)$. This completes the proof.
\end{proof}

\begin{remark} \label {kjutrgr} By Proposition \ref {wnsvpc}, we can identify functions  $g\in C^\infty (G_{n,m}) $ with functions
 $f: v \to g(\{v\})$ belonging to $C^\infty (\vnm)^{O(m)}$. Thus, in view of Remark \ref{kjutr}, there is a one-to-one correspondence between smooth functions on the Grassmannian  $G_{n,m}$ and smooth functions on the matrix space $\tilde\frM_{n,m}$.
 \end{remark}

 Owing to  diffeomorphisms
   \[  V_{n, m}/O(m) \Longleftrightarrow G_{n, m} \Longleftrightarrow  G_{n, n-m} \Longleftrightarrow V_{n, n-m}/O(n-m),\]
 we obtain the following statement.
\begin{proposition}\label {mnxmnb1} There is  a one-to-one  correspondence
\be\label {wweed}  f   \simeq f_*\ee
 between   right $O(m)$-invariant  functions $f$ on $\vnm$ and right $O(n-m)$-invariant
 functions $f_*$ on $V_{n, n-m}$. Moreover,
$f\in C^\infty (\vnm)^{O(m)}$ if and only if $f_*\in C^\infty (V_{n, n-m})^{O(n-m)}$.
\end{proposition}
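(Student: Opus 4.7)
The plan is to chain together the diffeomorphisms listed immediately before the proposition and track how smoothness transfers along each arrow. First I will set up the bijection. Given $f\in L^1(V_{n,m})^{O(m)}$, the right $O(m)$-invariance lets me define $\check f$ on the quotient $V_{n,m}/O(m)$ by $\check f(vO(m))=f(v)$; conversely, pulling back along the quotient map recovers a right $O(m)$-invariant function on $V_{n,m}$. Composing with the diffeomorphisms $V_{n,m}/O(m)\overset{E_3^{-1}}{\to} G_{n,m}\overset{E_2\circ E_1^{-1}}{\to} G_{n,n-m}\overset{E_3'}{\to} V_{n,n-m}/O(n-m)$ (with $E_1,E_2$ from Proposition \ref{dveedf} and $E_3,E_3'$ from Proposition \ref{wnsvpc}), I obtain a right $O(n-m)$-invariant function $f_*$ on $V_{n,n-m}$. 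Geometrically, the composite map is $f_*(u)=f(v)$ whenever $\{v\}=\{u\}^\perp$, i.e.\ it sends $v\in V_{n,m}$ to any orthonormal basis $\tilde v\in V_{n,n-m}$ of the orthogonal complement of $\mathrm{span}(v)$. Bijectivity of the assignment $f\leftrightarrow f_*$ is immediate from the bijectivity of each map in the chain.

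Next I will upgrade this bijection to the smooth category. For the first step, $f\in C^\infty(V_{n,m})^{O(m)}\Leftrightarrow \check f\in C^\infty(V_{n,m}/O(m))$, I apply Proposition \ref{jhfe} to the quotient map $\sigma:V_{n,m}\to V_{n,m}/O(m)$, which is a surjective submersion because $V_{n,m}/O(m)$ has its natural homogeneous-space smooth structure (exhibited concretely via $E_3$ as $G_{n,m}\simeq G/G_0$, whose quotient map is a submersion). Thus $f=\check f\circ\sigma$ is smooth iff $\check f$ is smooth. The middle transitions, $C^\infty(V_{n,m}/O(m))\leftrightarrow C^\infty(G_{n,m})\leftrightarrow C^\infty(G_{n,n-m})\leftrightarrow C^\infty(V_{n,n-m}/O(n-m))$, preserve smoothness automatically because each link is a diffeomorphism (Propositions \ref{mofoth} and \ref{wnsvpc}, together with the analogous version of \ref{wnsvpc} with $m$ replaced by $n-m$). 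The last step, $C^\infty(V_{n,n-m}/O(n-m))\leftrightarrow C^\infty(V_{n,n-m})^{O(n-m)}$, is the mirror image of the first and handled by the same application of Proposition \ref{jhfe}.

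The only step requiring any genuine care is checking that the quotient map $\sigma:V_{n,m}\to V_{n,m}/O(m)$ is a smooth surjective submersion, since this is what drives Proposition \ref{jhfe}. I would verify this by transporting through the $O(n)$-equivariant diffeomorphism $E_3^{-1}$ of Proposition \ref{wnsvpc}: under this identification $\sigma$ becomes the natural map $V_{n,m}\to G_{n,m}$, $v\mapsto \{v\}$, which factors as $V_{n,m}\hookrightarrow \tilde\frM_{n,m}\to G_{n,m}$, or more conveniently as the composition of $V_{n,m}\simeq O(n)/O(n-m)$ (Proposition \ref{jheefe}) with the canonical projection $O(n)/O(n-m)\to O(n)/(O(n-m)\times O(m))\simeq G_{n,m}$, which is a submersion between homogeneous spaces. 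Composing submersions, $\sigma$ is a submersion.

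Assembling the pieces, I obtain a chain of equivalences $f\in C^\infty(V_{n,m})^{O(m)} \Leftrightarrow \check f\in C^\infty(V_{n,m}/O(m))\Leftrightarrow \check f\in C^\infty(G_{n,m})\Leftrightarrow f_\circ\in C^\infty(G_{n,n-m})\Leftrightarrow f_*\in C^\infty(V_{n,n-m}/O(n-m))\Leftrightarrow f_*\in C^\infty(V_{n,n-m})^{O(n-m)}$, which is exactly the asserted equivalence. The main (and only mild) obstacle is organizing the chain of diffeomorphisms and the two applications of Proposition \ref{jhfe} cleanly; no new estimate or construction is needed.
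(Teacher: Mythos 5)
Your argument is correct and follows essentially the same route as the paper, which justifies the proposition in one line by the chain of diffeomorphisms $V_{n,m}/O(m)\simeq G_{n,m}\simeq G_{n,n-m}\simeq V_{n,n-m}/O(n-m)$ from Propositions \ref{dveedf} and \ref{wnsvpc}. Your additional verification that the quotient map $V_{n,m}\to V_{n,m}/O(m)$ is a surjective submersion, so that Proposition \ref{jhfe} applies at both ends of the chain, simply makes explicit what the paper leaves implicit.
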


The next Proposition, which is a consequence of normalization,   characterizes integrability properties of functions
 $f, f^*$, and the relevant functions on Grassmannians.

\begin{proposition}\label {muu1}  Given $v \in V_{n,m}$, let $\tilde v\in V_{n, n-m} $ be an arbitrary frame,  which is orthogonal to the subspace  $ \xi=\{v\}$, and let $\tilde \xi =\{\tilde v\}$.
 If $f$ is a right $O(m)$-invariant  function on $\vnm$, $f^*$ is defined by
(\ref{wweed}),  and the functions $g$ and $g^*$ are defined by
\[f(v)=g(\{v\}), \qquad f^*(\tilde v)=g^*(\{\tilde v\}),\] then
\be\label {wgted}
\intl_{V_{n,m}} \!\!f(v)\, d_*v \!=\! \intl_{V_{n,n-m}}\! \!\!f^*(\tilde v)\, d_*\tilde v \!=\! \intl_{G_{n,m}} \!\!g(\xi)\, d_*\xi\! = \!\intl_{G_{n,n-m}} \!\!\!g^*(\tilde \xi)\, d_*\tilde \xi,\ee
provided that at least one of these integrals exists in the Lebesgue sense.
\end{proposition}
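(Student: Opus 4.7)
The plan is to establish the four equalities by threading them through the chain of $O(n)$-equivariant diffeomorphisms
\[
V_{n,m}/O(m) \longleftrightarrow G_{n,m} \longleftrightarrow G_{n,n-m} \longleftrightarrow V_{n,n-m}/O(n-m)
\]
already supplied by Propositions \ref{dveedf} and \ref{wnsvpc}, and then invoking the uniqueness of the $O(n)$-invariant probability measure on each compact homogeneous space. Since $V_{n,m}$, $V_{n,n-m}$, $G_{n,m}$, $G_{n,n-m}$ are all compact and $O(n)$ acts transitively on each, the normalized Haar measure $d_*$ on each is the unique $O(n)$-invariant probability measure, which is the engine of the proof.

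First I would handle the left-most equality $\int_{V_{n,m}} f\,d_*v = \int_{G_{n,m}} g\,d_*\xi$. Let $\pi_1: V_{n,m}\to V_{n,m}/O(m) \simeq G_{n,m}$ be the quotient map $v\mapsto \{v\}$. Since $f$ is right $O(m)$-invariant, $f = g\circ \pi_1$. The pushforward measure $(\pi_1)_* d_*v$ on $G_{n,m}$ is an $O(n)$-invariant probability measure (invariance of $d_*v$ under the left $O(n)$-action on $V_{n,m}$ and equivariance of $\pi_1$), hence equals $d_*\xi$ by uniqueness. The change-of-variables formula for pushforwards then gives the claim, provided any of the integrals is defined in the Lebesgue sense (the same assumption propagates through each step). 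The analogous argument, with $m$ replaced by $n-m$, yields the right-most equality $\int_{V_{n,n-m}} f^*\,d_*\tilde v = \int_{G_{n,n-m}} g^*\,d_*\tilde\xi$.

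It remains to prove the middle equality $\int_{G_{n,m}} g\,d_*\xi = \int_{G_{n,n-m}} g^*\,d_*\tilde\xi$. Let $\perp: G_{n,m}\to G_{n,n-m}$ be the orthogonal-complement map $\xi\mapsto \xi^\perp$; by Proposition \ref{dveedf} (composing $E_1$ and $E_2$) this is an $O(n)$-equivariant diffeomorphism. The pushforward $\perp_* d_*\xi$ is an $O(n)$-invariant probability measure on $G_{n,n-m}$, hence equals $d_*\tilde\xi$ by uniqueness. By construction $f^*(\tilde v) = f(v)$ where $\tilde v$ spans $\{v\}^\perp$, so $g^*(\xi^\perp) = g(\xi)$, i.e.\ $g = g^*\circ{\perp}$. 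The change-of-variables formula then delivers the equality.

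I expect the only subtle point to be the explicit justification that the various ``descended'' measures produced by pushforward coincide with the normalized Haar measures fixed on the target spaces. The clean route is the uniqueness argument sketched above, which avoids any direct computation with coordinate charts on $V_{n,m}$ or $G_{n,m}$; the integrability hypothesis ``at least one of these integrals exists in the Lebesgue sense'' lets the identities be transferred along by Tonelli/Fubini in the usual way. No new estimates are needed, and no fact beyond Propositions \ref{mnxmnb1}, \ref{dveedf}, \ref{wnsvpc} and the normalization of $d_*$ is used.
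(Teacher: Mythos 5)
Your proof is correct and is essentially the paper's own argument: the paper states this proposition without a separate proof, calling it ``a consequence of normalization,'' and your uniqueness-of-the-invariant-probability-measure argument (pushing $d_*$ forward along the $O(n)$-equivariant quotient maps and the orthogonal-complement diffeomorphism, then transferring integrability by applying the change of variables first to $|f|$) is precisely the standard substantiation of that remark. Equivalently, all four integrals can be lifted to $\int_{O(n)} f(\gamma v_1)\, d_*\gamma$ via (\ref{porrl}) with $\tilde v_1$ chosen orthogonal to $v_1$, which rests on the same invariance-plus-normalization principle.
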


\vskip 0.2 truecm

\noindent {\bf Acknowledgement.} I would like to thank Semyon Alesker,  Fulton Gonzalez, Tomoyuki Kakehi,  Gestur \'Olafsson,  Isaac Pesenson, and Siddhartha Sahi for helpful discussions.


\end{document}